\providecommand{\U}[1]{\protect\rule{.1in}{.1in}}
\newtheorem{theorem}{Theorem}
\newtheorem{acknowledgement}[theorem]{Acknowledgement}
\newtheorem{definition}[theorem]{Definition}
\newtheorem{lemma}[theorem]{Lemma}
\newtheorem{remark}[theorem]{Remark}
\newenvironment{proof}[1][Proof]{\noindent\textbf{#1} }{\ \rule{0.5em}{0.5em}}
\numberwithin{theorem}{section}
\begin{document}
 
\author{}
\title{\scshape Calculus of Variations: A Differential Form Approach }
\date{}
\maketitle

\centerline{\scshape Swarnendu Sil}
\medskip
{\footnotesize
 \centerline{ Section de Math\'{e}matiques}
   \centerline{Station 8, EPFL}
   \centerline{1015 Lausanne, Switzerland}
   \centerline{swarnendu.sil@epfl.ch}
}

\begin{abstract}
 We prove existence and up to the boundary regularity estimates in $L^{p}$ and H\"{o}lder spaces for weak solutions of the linear system 
\begin{equation*}
 \delta \left( A d\omega \right)  + B^{T}d\delta \left( B\omega \right)  =  \lambda B\omega + f  \text{ in } \Omega,
\end{equation*}
with either $ \nu\wedge \omega$ and $\nu\wedge \delta \left( B\omega \right)$ or $\nu\lrcorner B\omega$ and 
$\nu\lrcorner \left( A d\omega \right)$ prescribed on $\partial\Omega.$ The proofs are in the spirit of  `Campanato method' and thus 
avoid potential theory and do not require a verification of Agmon-Douglis-Nirenberg or Lopatinski\u{i}-Shapiro type
 conditions. Applications to a number of related problems, such as general versions of the time-harmonic Maxwell system, stationary Stokes problem and the `div-curl' systems, are included. 
\end{abstract}
\textit{Keywords:} Boundary regularity, elliptic system, Campanato method, Hodge Laplacian, Maxwell system, Stokes system, div-curl system, 
Gaffney-Friedrichs inequality, tangential and normal boundary condition. \smallskip

\noindent\textit{2010 Mathematics Subject Classification:} 35J57, 35D10.

\section{Introduction}
Up to the boundary regularity results for second order linear elliptic systems in divergence form with Dirichlet or conormal derivative type boundary condition are well-known. However, a large class of elliptic systems can be written 
more crisply in the language of differential forms and often, for such systems, neither of those is the relevant boundary condition. A 
typical example is the Poisson problem for the Hodge Laplacian with prescribed `tangential part' or prescribed `normal part' 
on the boundary respectively, namely the systems,
\begin{equation*}
 \left\lbrace \begin{gathered}
                \delta d\omega  + d\delta \omega  =  f  \text{ in } \Omega, \\
                \nu\wedge \omega = 0 \text{  on } \partial\Omega. \\
                \nu\wedge \delta \omega = 0 \text{ on } \partial\Omega. 
                \end{gathered} 
                \right. \qquad \text{ or } \qquad \left\lbrace \begin{gathered}
                \delta d\omega  + d\delta \omega  =  f  \text{ in } \Omega, \\
                \nu\lrcorner \omega = 0 \text{  on } \partial\Omega. \\
                \nu\lrcorner d\omega = 0 \text{ on } \partial\Omega. 
                \end{gathered}\right.
\end{equation*}
The regularity results for these two systems are known since Morrey\cite{MorreyHarmonic2} (see also \cite{Morrey1966}). However, regularity results for more general linear elliptic systems with 
these type of boundary conditions do not exist in the literature. The reason for this surprising absence probably lies in the available proofs of these results. The original proof of Morrey 
relied on potential theory and used the fact that $\delta d + d \delta$, i.e the Hodge 
Laplacian is precisely the componentwise scalar Laplacian. Also, after flattening the boundary, the condition $\nu\wedge\omega = 0$ and $\nu \wedge \delta\omega = 0$ imply 
that as far as the principal order terms are concerned, the whole system decouples and gets reduced to $\binom{n}{k}$ number of scalar Poisson problems with lower order terms, out of which 
$\binom{n-1}{k}$ number of equations has zero Dirichlet boundary condition and the other $\binom{n-1}{k-1}$ number 
of equations has zero Neumann boundary condition. Other available proofs verify either the Lopatinski\u{i}-Shapiro, henceforth LS, (see Schwarz\cite{SchwarzHodge}) or 
 the Agmon-Douglis-Nirenberg complementing condition, henceforth ADN, (see Csato \cite{Csatothesis}) and these verifications too rely on the fact that the principal symbol 
 of the operator is rather `simple'. 
 
 \paragraph*{} Deriving the regularity results for the system $\delta \left( A d\omega \right)  + B^{T}d\delta \left( B\omega \right)  =   f $ or even the simpler system 
 $\delta \left( A d\omega \right)  + d\delta \omega  =   f$ with these type of boundary conditions calls for different methods, as the verification of the ADN or LS conditions 
 for these systems looks algebraically tedious. On the other hand, the boundary conditions simply arise out of an integration by parts formula and in principle, verifying ADN or LS should be an 
 avoidable overkill. 
 
\paragraph*{} For linear elliptic systems with Dirichlet boundary conditions, the classical Campanato method (see Campanato\cite{CampanatoEllipticsystem}, 
also Giaquinta-Martinazzi\cite{giaquinta-martinazzi-regularity} and references therein) of deriving estimates in Morrey and Campanato spaces, 
avoids potential theory and yields at the same time both the Schauder estimates and via an interpolation theorem of Stampacchia\cite{StampacchiaInterpolation}, also 
 the $L^p$ estimates. This approach has also been adapted to elliptic systems with conormal derivative type condition, for example in Giaquinta-Modica\cite{GiaquintaModicaStokes}. 
 The crux of the present article is to adapt this approach to these kind of boundary conditions. This, as a particular case  yields a new proof for the regularity of the Hodge 
 Laplacian system as well.  
 
\paragraph*{} The main result of the present article (Theorem \ref{generalHodgesystemtheorem} and \ref{generalHodgesystemtheoremnormal}) is the existence and up to the boundary regularity results in H\"{o}lder and $L^{p}$ spaces for the system 
\begin{equation}\label{introhodgeelliptic}
 \left\lbrace \begin{gathered}
                \delta ( A d\omega )  + B^{T}d \delta\left( B\omega \right)   =   f  \quad \text{ in } \Omega, \\
                \nu\wedge \omega = \nu\wedge\omega_{0} \quad \text{  on } \partial\Omega, \\
                \nu\wedge \delta \left( B\omega \right) = \nu\wedge\delta \left( B \omega_{0} \right) \quad \text{ on } \partial\Omega, 
                \end{gathered} 
                \right. 
\end{equation}
and also its counterpart with the `normal condition' 
\begin{equation}\label{introhodgeellipticnormal}
 \left\lbrace \begin{gathered}
                \delta ( A d\omega )  + B^{T} d \delta\left( B \omega \right)   =   f  \text{ in } \Omega, \\
                \nu\lrcorner \left( B \omega \right) = \nu\lrcorner\left( B \omega_{0} \right) \text{  on } \partial\Omega. \\
                \nu\lrcorner \left( A d\omega \right) = \nu\lrcorner \left( A d\omega_{0} \right)  \text{ on } \partial\Omega, 
                \end{gathered} 
                \right. 
\end{equation}
where $A$ and $B$ are matrix fields and $\omega$ is a $k$-form. Note that unlike the case of the Hodge Laplacian, where solving the tangential boundary value problem for 
$k$-forms is equivalent to solving the normal boundary value problem for $(n-k)$-forms by Hodge duality, these two problems are not dual to each other in general. Instead 
each has their respective dual versions. 
\paragraph*{} The results for \eqref{introhodgeelliptic} and \eqref{introhodgeellipticnormal} yield also the existence and regularity results for a number of related problems. 
The time-harmonic Maxwell's equation in a bounded domain in $\mathbb{R}^{3}$ is 
\begin{align*}
  \left\lbrace \begin{aligned}
                \operatorname*{curl}  H  &=  i\omega \varepsilon E + J_{e}    
                &&\text{ in } \Omega, \\
                \operatorname*{curl} E &= -i\omega \mu H + J_{m}     &&\text{ in } \Omega, \\
                \nu \times E &= \nu \times E_{0} &&\text{  on } \partial\Omega.
                \end{aligned} 
                \right.
\end{align*}
Eliminating $H$ and writing as a second order system in $E$, we obtain, 
\begin{align*}
 \left\lbrace \begin{aligned}
                \operatorname*{curl} ( \mu^{-1} \operatorname*{curl} E  ) &=  \omega^2 \varepsilon E -i\omega J_{e} + \operatorname*{curl}\left( \mu^{-1} J_{m}\right)    
                &&\text{ in } \Omega, \\
                \operatorname*{div} ( \varepsilon E ) &= \frac{i}{\omega}\operatorname*{div} J_{e} &&\text{ in } \Omega, \\
                \nu \times E &= \nu \times E_{0} &&\text{  on } \partial\Omega.
                \end{aligned} 
                \right. 
\end{align*}
$H$ also satisfies a similar second order system with normal boundary conditions. Writing in the language of differential form, the systems for $E$ and $H$ are 
special cases of the general systems 
\begin{equation*}\label{introgeneralmaxwell} 
   \left\lbrace \begin{aligned}
                \delta ( A du )   &= f  &&\text{ in } \Omega, \\
                \delta \left( B u \right) &= g &&\text{ in } \Omega, \\
                \nu\wedge u &= \nu\wedge u_{0} &&\text{  on } \partial\Omega, 
                \end{aligned} 
                \right. \quad \text{ and } \quad \left\lbrace \begin{aligned}
                \delta ( A du )   &=  f  &&\text{ in } \Omega, \\
                \delta \left( B u \right) &= g &&\text{ in } \Omega, \\
                \nu\lrcorner Bu &= \nu\lrcorner Bu_{0} &&\text{  on } \partial\Omega, \\
                \nu\lrcorner \left( A du \right) &= \nu\lrcorner \left( A du_{0} \right) &&\text{  on } \partial\Omega,
                \end{aligned} 
                \right. 
\end{equation*}
respectively, for $k$-forms $u$. When $B$ is the identity matrix, these two systems are respectively related to general versions of 
the vorticity-velocity-pressure formulation of Stationary Stokes system with two different types of boundary conditions, both of which are studied
 in the context of Stokes flow in three dimensions (see \cite{BeiraoBerselliNavierStokesstressfreebc}, \cite{ConcaPironneauNavierStokes}, \cite{DuboisStokes}). 
\paragraph*{} On our way to prove the above results, we also show existence and apriori regularity estimates for the generalized `div-curl' systems 
\begin{equation*}
   \left\lbrace \begin{aligned}
                d(A(x)u) &= f  &&\text{ in } \Omega,\\
                \delta (B(x) u) &= g &&\text{ in } \Omega, \\
                \nu\wedge A(x)u &= \nu\wedge u_0 &&\text{  on } \partial\Omega,
                \end{aligned} 
                \right. \quad \text{ and } \quad \left\lbrace \begin{aligned}
                d(A(x)u) &= f  &&\text{ in } \Omega, \\
                \delta (B(x) u) &= g &&\text{ in } \Omega, \\
                \nu\lrcorner B(x)u &= \nu\lrcorner u_0 &&\text{  on } \partial\Omega.
                \end{aligned} 
                \right. \end{equation*}
 These systems have been studied in various degrees of generality in connection to Gaffney-Friedrichs type inequalities, starting with the work of 
 Friedrichs\cite{FriedrichsGaffney}, Gaffney\cite{GaffneyHarmonicoperator},\cite{GaffneyHarmonicintegrals} and Morrey\cite{MorreyHarmonic2} 
 (see also \cite{CsatoDacKneuss},\cite{DacGanboKneussSymplectic}, \cite{SaranenDivcurl}). We also prove the existence of a solution with optimal regularity for the Dirichlet boundary value problem 
\begin{equation*}
 \left\lbrace \begin{gathered}
                \delta ( A (x) du )  = f   \text{ in } \Omega, \\
                u = u_{0}  \text{  on } \partial\Omega,
                \end{gathered} 
                \right. 
\end{equation*} 
for $k$-forms $u.$ Note that when $k=0$, the problem is just the Dirichlet problem for the elliptic equation $$\operatorname*{div}\left( A \nabla u \right) = f $$  whereas 
as soon as $k \geq 1,$ it becomes a system which is not elliptic and has an infinite dimensional space of solutions. 

\paragraph*{} Some of these results, especially the ones concerning time-harmonic Maxwell system have been studied extensively in the past. Some results 
are already known in the special cases when $u$ is a vector 
field (corresponding to $k=1$ in our case) and $n=3$ (see e.g. 
\cite{AlbertiCapdeboscqMaxwell}, \cite{leisMaxwellanisotropic}, \cite{weber-regularitymaxwell}). But those proofs are based 
on scalar elliptic theory componentwise, using the fact that for vector fields in three dimensions, both the boundary conditions and the systems are simple enough to find explicitly, by direct calculation, the equations satisfied by 
each component. Some related results in the general case of $k$-forms can be found 
in \cite{PicardMaxwellcompactembedding},\cite{WeckMaxwell}. The advantage of our approach is that it brings out the common core of the structural features of all these related systems that is ultimately responsible for regularity.

\section{Notations}
We now fix the notations, for further details we refer to
\cite{CsatoDacKneuss}. Let $n \geq 2$ and $0 \leq k \leq n$ be an integer. \begin{itemize}
\item We write $\Lambda^{k}\left(  \mathbb{R}^{n}\right)  $ (or simply
$\Lambda^{k}$) to denote the vector space of all alternating $k-$linear maps
$f:\underbrace{\mathbb{R}^{n}\times\cdots\times\mathbb{R}^{n}}_{k-\text{times}%
}\rightarrow\mathbb{R}.$ For $k=0,$ we set $\Lambda^{0}\left(  \mathbb{R}%
^{n}\right)  =\mathbb{R}.$ Note that $\Lambda^{k}\left(  \mathbb{R}%
^{n}\right)  =\{0\}$ for $k>n$ and, for $k\leq n,$ $\operatorname{dim}\left(
\Lambda^{k}\left(  \mathbb{R}^{n}\right)  \right)  ={\binom{{n}}{{k}}}.$

\item $\wedge,$ $\lrcorner\,,$ $\left\langle \ ;\ \right\rangle $ and,
respectively, $\ast$ denote the exterior product, the interior product, the
scalar product and, respectively, the Hodge star operator. However, $\ast$ in the superscript denotes pullback operation. If $T:\mathbb{R}^{n} \rightarrow \mathbb{R}^{n}$ is 
a linear map and $\xi \in \Lambda^{k},$ then $T^{\ast}\xi \in \Lambda^{k}$ stands for the usual pullback of $\xi.$ For pullback of differential forms, see later. 

\item If $\left\{  e^{1},\cdots,e^{n}\right\}  $ is a basis of $\mathbb{R}%
^{n},$ then, identifying $\Lambda^{1}$ with $\mathbb{R}^{n},$%
\[
\left\{  e^{i_{1}}\wedge\cdots\wedge e^{i_{k}}:1\leq i_{1}<\cdots<i_{k}\leq
n\right\}
\]
is a basis of $\Lambda^{k}.$ An element $\xi\in\Lambda^{k}\left(
\mathbb{R}^{n}\right)  $ will therefore be written as%
\[
\xi=\sum_{1\leq i_{1}<\cdots<i_{k}\leq n}\xi_{i_{1}i_{2}\cdots i_{k}%
}\,e^{i_{1}}\wedge\cdots\wedge e^{i_{k}}=\sum_{I\in\mathcal{T}^{k}}\xi
_{I}\,e^{I}%
\]
where%
\[
\mathcal{T}^{k}=\left\{  I=\left(  i_{1}\,,\cdots,i_{k}\right)
\in\mathbb{N}^{k}:1\leq i_{1}<\cdots<i_{k}\leq n\right\}  .
\]
We shall identify exterior $1$-forms with vectors freely and shall refrain from using the musical notation to denote these identifications, in order not to 
burden our notations further. Also, we shall often write an exterior $k$-form as a vector in $\mathbb{R}^{\tbinom{n}{k}}$, when the alternating structure is not important for our concern.
In a similar vein, we shall identify $m \times n$ matrices with the space $\mathbb{R}^{m \times n}.$ Also, given any square matrix $X,$ $X^{T}$ and $X^{-1}$ denotes its transpose and its inverse, 
if it exists, respectively. We shall employ $\mathbf{I}$ to denote the identity matrix, whose size would be clear from the context.  
\end{itemize}

\noindent Let $0\leqslant k\leqslant n$ and let $\Omega\subset\mathbb{R}^n$ be open, bounded and smooth. 
\begin{itemize}
\item A differential $k$-form $\omega$ is a
measurable function $\omega:\Omega\rightarrow\Lambda^{k}.$
\item The usual Lebesgue, Sobolev and H\"{o}lder spaces are defined componentwise and are denoted by their usual symbols. 
\item If $\omega \in L^{1}\left( \Omega ; \Lambda^{k}\right)$ and $\Phi: \Omega^{'}\subset \mathbb{R}^{n}\rightarrow \Omega$ is an orientation preserving diffeomorphism,
then $\Phi^{\ast}\left( \omega \right) \in L^{1}\left( \Omega^{'}; \Lambda^{k}\right)$ stands for the pullback of $\omega$.

\item For $1\leqslant p <  \infty$ and $\lambda \geq 0,$  $\mathrm{L}^{p,\lambda}\left(\Omega;\Lambda^{k}\right) $ stands for the Morrey space of all $\omega \in L^{p}\left(\Omega;\Lambda^{k}\right)$ such that 
$$ \lVert \omega \rVert_{\mathrm{L}^{p,\lambda}\left(\Omega;\Lambda^{k}\right)}^{p} := \sup_{\substack{ x_{0} \in \overline{\Omega},\\ \rho >0 }} 
\rho^{-\lambda} \int_{B_{\rho}(x_{0}) \cap \Omega} \lvert \omega \rvert^{p} < \infty, $$ endowed with the norm 
$ \lVert \omega \rVert_{\mathrm{L}^{p,\lambda}\left(\Omega;\Lambda^{k}\right)}$ and $\mathcal{L}^{p,\lambda}\left(\Omega;\Lambda^{k}\right) $ denotes the Campanato space of all $\omega \in L^{p}\left(\Omega;\Lambda^{k}\right)$ such that 
$$ [\omega ]_{\mathcal{L}^{p,\lambda}\left(\Omega;\Lambda^{k}\right)}^{p} := \sup_{\substack{ x_{0} \in \overline{\Omega},\\  \rho >0 }} 
\rho^{-\lambda} \int_{B_{\rho}(x_{0}) \cap \Omega} \lvert \omega  - (\omega)_{ \rho , x_{0}}\rvert^{p} < \infty, $$ where 
$$\displaystyle (\omega)_{ \rho , x_{0}} = \frac{1}{\operatorname*{meas} \left( B_{\rho}(x_{0}) \cap \Omega \right)}\int_{B_{\rho}(x_{0}) \cap \Omega} \omega ,$$  
endowed with the norm 
$ \lVert \omega \rVert_{\mathcal{L}^{p,\lambda}\left(\Omega;\Lambda^{k}\right)} := \lVert  \omega \rVert_{L^{p}(\Omega, \Lambda^{k})} +  
[\omega ]_{\mathcal{L}^{p,\lambda}\left(\Omega;\Lambda^{k}\right)}.$ For standard facts about these spaces, particularly their identification with H\"{o}lder spaces and BMO space, see \cite{giaquinta-martinazzi-regularity}. 
\item Two particular differential operators on differential forms will have a special significance for us. A differential
$(k+1)$-form $\varphi\in L^{1}_{\rm loc}(\Omega;\Lambda^{k+1})$
is called the exterior derivative of $\omega\in
L^{1}_{\rm loc}\left(\Omega;\Lambda^{k}\right),$ denoted by $d\omega$,  if
$$
\int_{\Omega} \eta\wedge\varphi=(-1)^{n-k}\int_{\Omega} d\eta\wedge\omega,
$$
for all $\eta\in C^{\infty}_{0}\left(\Omega;\Lambda^{n-k-1}\right).$  The Hodge codifferential of $\omega\in L^{1}_{\rm loc}\left(\Omega;\Lambda^{k}\right)$ is
a $(k-1)$-form, denoted $\delta\omega\in L^{1}_{\rm loc}\left(\Omega;\Lambda^{k-1}\right)$
defined as
$$
\delta\omega:=(-1)^{nk+1}*d*\omega.
$$ See \cite{CsatoDacKneuss} for the properties and the integration by parts formula regarding these operators.
\item Let $ 1 \leq p \leq \infty$ and let $\nu$ be the outward unit normal to $\partial\Omega,$ identified with the $1$-form 
$\displaystyle \nu = \sum_{i=1}^{n} \nu_{i} dx^{i} .$ For any differential $k$-form $\omega$ on $\Omega,$ the $(k+1)$-form $\nu\wedge\omega$ and the $(k-1)$-form $\nu\lrcorner\omega$
 on $\partial\Omega$ are called the tangential and normal part of $\omega,$ respectively and are interpreted in the sense of traces when $\omega$ is not continuous. The spaces $W_{T}^{1,p}\left(  \Omega;\Lambda^{k}\right)  $ and
$W_{N}^{1,p}\left(  \Omega;\Lambda^{k}\right)  $ are defined as%
\[
W_{T}^{1,p}\left(  \Omega;\Lambda^{k}\right)  =\left\{  \omega\in
W^{1,p}\left(  \Omega;\Lambda^{k}\right)  :\nu\wedge\omega=0\text{ on
}\partial\Omega\right\}
\]%
\[
W_{N}^{1,p}\left(  \Omega;\Lambda^{k}\right)  =\left\{  \omega\in
W^{1,p}\left(  \Omega;\Lambda^{k}\right)  :\nu\,\lrcorner\,\omega=0\text{ on
}\partial\Omega\right\}.
\]
Also, we define, 
   $$W_{\delta, T}^{1,p}(\Omega; \Lambda^{k}) = \left\lbrace \omega \in W_{T}^{1,p}(\Omega; \Lambda^{k}) : \delta\omega = 0 \text{ in }
   \Omega \right\rbrace $$
   and $W_{\delta,N}^{1,p}(\Omega; \Lambda^{k}) $ is defined similarly. The space $\mathcal{H}_{T}\left(  \Omega;\Lambda^{k}\right)$ is defined as
\[
\mathcal{H}_{T}\left(  \Omega;\Lambda^{k}\right)  =\left\{  \omega\in
W_{T}^{1,2}\left(  \Omega;\Lambda^{k}\right)  :d\omega=0\text{ and }%
\delta\omega=0\text{ in }\Omega\right\}
\]%
The spaces $\mathcal{H}_{N}\left(  \Omega;\Lambda^{k}\right)  $ is defined similarly.

\item Let $B_{R}^{+}$ denote the half-ball centered around $0$ in the half space, i.e 
$$B_{R}^{+} = \lbrace  x \in \mathbb{R}^{n}: \lvert x \rvert < R, x_{n} > 0\rbrace .$$
Let $\Gamma_{R}$  and $C_{R}$ denote the flat part and the curved part, respectively, of the boundary of the half ball $B_{R}^{+}.$ 
We define the following subspace of $W^{1,2}(B_{R}^{+} ; \Lambda^{k})$,  $$W_{T, flat}^{1,2}(B_{R}^{+} ; \Lambda^{k})= \left\lbrace \psi \in W^{1,2}:
e_n \wedge \psi = 0 \text{ on }  \Gamma_{R},\text{ } \psi = 0 \text{ on } C_{R} \right\rbrace.$$
The space $W_{N, flat}^{1,2}(B_{R_{0}}^{+} ; \Lambda^{k})$ is defined similarly. We use the notation $\left( \cdot \right)_{s}$ to denote the average over half-balls, i.e 
$$ \left( \theta^{I} \right)_{s} = \frac{1}{\operatorname*{meas}(B_{s}^{+})} \int_{B^{+}_{s}} \theta^{I}.$$
\end{itemize}
We shall frequently use two ellipticity conditions for matrix fields. 
\begin{definition}\label{legendre-hadamard condition}
 A map $A:\Omega \rightarrow L(\Lambda^{k},\Lambda^{k})$ is said to satisfy the \textbf{Legendre-Hadamard condition} if $A$ satisfies,
 for all $x \in \Omega$,
 $$ \langle A (x) ( a\wedge b ) \  ;\  a\wedge b \rangle  \geq \gamma \left\vert  a \wedge b \right\vert^{2}, 
 \qquad  \text{ for every } a \in \Lambda^{1}, b \in \Lambda^{k-1}$$ for some constant $\gamma > 0.$ 
 $A$ is said to satisfy the \textbf{Legendre condition} if for some constant $\gamma > 0$ we have , for all  $x \in \Omega,$
 $$\langle A (x)  \xi \  ;\  \xi \rangle  \geq \gamma \left\vert  \xi \right\vert^{2}, 
 \qquad  \text{ for every } \xi \in \Lambda^{k}.$$ 
\end{definition}
Finally, to end this section, we shall frequently employ $c$ to denote a generic positive constant in the estimates, whose numerical value might change from one line to another. 

\section{Boundary estimates}\label{boundary estimates}

Our starting point is the classical Gaffney inequality, which can be proved from a simple integration by parts formula (see \cite{CsatoDacGaffney}). Using this inequality, we show 
that the usual `Campanato method' for regularity estimates (see for example, Giaquinta-Martinazzi\cite{giaquinta-martinazzi-regularity}) can be adapted to this setting to yield 
up to the boundary regularity estimates for \eqref{introhodgeelliptic} and \eqref{introhodgeellipticnormal} in the special case 
when $B \equiv \mathbf{I},$ the identity matrix. As a consequence, we prove a few general versions of the Gaffney-Friedrichs inequality which in turn yield the estimates for the 
general cases. The derivation of these inequalities is the main reason for proceeding in two stages rather than directly attempting to prove regularity in the general cases. 

Our strategy is to flatten the boundary and freeze the coefficients (the so-called `Korn's trick') and derive estimates in the Campanato spaces for the resulting constant 
coefficient operators on half-balls. 

\subsection{Preliminary lemmas}
We start with a few easy results that we shall use. 
\begin{lemma}[ellipticity lemma]\label{ellipticitylemma}
 Let $B:\Lambda^{k}\rightarrow \Lambda^{k}$ satisfy   
 $$ \langle B \xi ; \xi \rangle \geq \gamma_{B} \lvert \xi \rvert^{2} \qquad \text{ for all } \xi \in \Lambda^{k},$$ for some constant $ \gamma_{B} > 0.$ Then for every 
 $\gamma_{A} >0,$ there exist constants 
 $c_{1},c_{2}  > 0,$ depending only on $\gamma_{A}$ and $B,$ such that 
 \begin{align}
  \gamma_{A}\lvert e_{n} \wedge \xi \rvert^{2} + \left\lvert e_{n} \lrcorner B\xi \right\rvert^{2} 
&\geq c_{1} \left\lvert \xi \right\rvert^{2} \qquad \text{ for all } \xi \in \Lambda^{k}, \label{ellipticitytan}\\
 \gamma_{A}\lvert e_{n} \wedge B^{-1}\xi \rvert^{2} + \left\lvert e_{n} \lrcorner \xi \right\rvert^{2} 
&\geq c_{2} \left\lvert \xi \right\rvert^{2} \qquad \text{ for all } \xi \in \Lambda^{k}. \label{ellipticitynormal}
 \end{align}

\end{lemma}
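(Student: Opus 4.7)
The plan is to exploit the pointwise orthogonal decomposition of $\Lambda^k$ relative to $e_n$. Any $\xi \in \Lambda^k$ can be written uniquely as $\xi = \alpha + e_n \wedge \beta$ with $e_n \lrcorner \alpha = 0$ and $e_n \lrcorner \beta = 0$, and a direct computation on the standard basis gives the three identities
\[
|\xi|^2 = |\alpha|^2 + |\beta|^2, \qquad |e_n \wedge \xi|^2 = |\alpha|^2, \qquad |e_n \lrcorner \xi|^2 = |\beta|^2.
\]
Consequently the $\gamma_A |e_n\wedge\xi|^2$ term on the left of \eqref{ellipticitytan} already controls $|\alpha|^2$, and the entire task reduces to extracting $|\beta|^2$ from $|e_n \lrcorner B\xi|^2$ while spending only in $|\alpha|^2$.

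For this, note that since $e_n \lrcorner \beta = 0$, positivity of $B$ yields
\[
\langle e_n \lrcorner B(e_n \wedge \beta),\,\beta\rangle = \langle B(e_n \wedge \beta),\,e_n \wedge \beta\rangle \geq \gamma_B |e_n \wedge \beta|^2 = \gamma_B |\beta|^2,
\]
so Cauchy--Schwarz gives $|e_n \lrcorner B(e_n \wedge \beta)| \geq \gamma_B |\beta|$. Splitting $e_n \lrcorner B\xi = e_n \lrcorner B(e_n \wedge \beta) + e_n \lrcorner B\alpha$ and applying the weighted inequality $|a+b|^2 \geq (1-\varepsilon)|a|^2 - \tfrac{1-\varepsilon}{\varepsilon}|b|^2$ with $\varepsilon$ chosen so that $\tfrac{1-\varepsilon}{\varepsilon}\|B\|^2 < \gamma_A$, one obtains
\[
\gamma_A |e_n \wedge \xi|^2 + |e_n \lrcorner B\xi|^2 \;\geq\; c_1\bigl(|\alpha|^2 + |\beta|^2\bigr) \;=\; c_1 |\xi|^2,
\]
with $c_1 > 0$ depending only on $\gamma_A$, $\gamma_B$, and $\|B\|$. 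This proves \eqref{ellipticitytan}.

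For \eqref{ellipticitynormal}, observe that the Legendre condition forces $|B\xi| \geq \gamma_B |\xi|$ (from $\gamma_B|\xi|^2 \leq \langle B\xi,\xi\rangle \leq |B\xi|\,|\xi|$), so $B$ is injective and hence invertible on the finite-dimensional space $\Lambda^k$, with $\|B^{-1}\| \leq 1/\gamma_B$. Substituting $\xi \leftarrow B\eta$ in \eqref{ellipticitytan} and using $|\eta| \geq |\xi|/\|B\|$ yields \eqref{ellipticitynormal} with $c_2 = c_1/\|B\|^2$.

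The only real bookkeeping obstacle is the Young's inequality balancing in the first step. If an explicit constant is not needed, a cleaner route is a compactness argument: assume \eqref{ellipticitytan} fails and extract a sequence $\xi_m$ with $|\xi_m|=1$ converging to some $\xi$ with $|\xi| = 1$, $e_n \wedge \xi = 0$ (hence $\xi = e_n \wedge \beta$ with $|\beta| = 1$), and $e_n \lrcorner B\xi = 0$; then $\langle B\xi,\xi\rangle = \langle e_n \lrcorner B\xi,\beta\rangle = 0$ contradicts the Legendre hypothesis, and \eqref{ellipticitynormal} follows by the same substitution as above.
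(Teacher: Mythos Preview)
Your proof is correct. The compactness argument you sketch at the end is exactly the paper's proof: assume failure, pass to a limit $\xi$ with $|\xi|=1$, $e_n\wedge\xi=0$, $e_n\lrcorner B\xi=0$, and use the identity $\xi = e_n\lrcorner(e_n\wedge\xi)+e_n\wedge(e_n\lrcorner\xi)$ together with adjointness of $\wedge$ and $\lrcorner$ to force $\langle B\xi,\xi\rangle=0$, a contradiction. The paper runs the same contradiction independently for \eqref{ellipticitynormal}, using that $B^{-1}$ also satisfies a Legendre condition. Your main contribution is twofold: first, the direct quantitative argument via the orthogonal splitting $\xi=\alpha+e_n\wedge\beta$ and the weighted Young inequality, which yields an explicit $c_1$ in terms of $\gamma_A$, $\gamma_B$, and $\|B\|$ rather than a nonconstructive compactness constant; second, the observation that \eqref{ellipticitynormal} follows from \eqref{ellipticitytan} by the change of variable $\eta=B^{-1}\xi$ (your phrase ``substituting $\xi\leftarrow B\eta$ in \eqref{ellipticitytan}'' is a bit ambiguous---what you mean is to apply \eqref{ellipticitytan} to $\eta:=B^{-1}\xi$), which is cleaner than redoing the argument for $B^{-1}$. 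Both routes are valid; the paper's is shorter to write, yours is more informative about the constants.
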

\begin{proof} We prove by contradiction. If \eqref{ellipticitytan} is false then there exists a sequence $\lbrace \xi_{s} \rbrace$ such that for every $s \geq 1, $ we have $\lvert \xi_{s} \rvert =1 $ and  
 $\gamma_{A}\lvert e_{n} \wedge \xi_{s} \rvert^{2} + \left\lvert e_{n} \lrcorner B\xi_{s} \right\rvert^{2} 
< \frac{1}{s}. $ This implies, passing to a subsequence if necessary, $\xi_{s} \rightarrow \xi$ for some $\xi \in \Lambda^{k}$ such that 
$\lvert \xi \rvert =1 ,$ $e_{n} \wedge \xi = 0 $ and $e_{n} \lrcorner B\xi = 0. $ Since  
$ \xi = e_{n}\lrcorner \left( e_{n} \wedge \xi \right) + e_{n}\wedge \left(e_{n} \lrcorner \xi \right), $ this implies,
$$ 0 = \langle e_{n} \lrcorner B\xi ; e_{n} \lrcorner \xi \rangle = \langle   B\xi ; e_{n} \wedge \left( e_{n} \lrcorner \xi \right) \rangle 
 = \langle B \xi ; \xi \rangle \geq \gamma_{B} \lvert \xi \rvert^{2} =  \gamma_{B} > 0.$$ Similarly, if \eqref{ellipticitynormal} is false 
 then there exist $\xi \in \Lambda^{k}$ such that 
$\lvert \xi \rvert =1 ,$ $e_{n} \wedge B^{-1}\xi = 0 $ and $e_{n} \lrcorner \xi = 0. $ Using the same identity as above for $\xi$ implies, 
$$ 0 = \langle e_{n} \wedge B^{-1}\xi ; e_{n} \wedge \xi \rangle = \langle   B^{-1}\xi ; e_{n} \lrcorner \left( e_{n} \wedge \xi \right) \rangle 
 = \langle B^{-1} \xi ; \xi \rangle \geq  c \lvert \xi \rvert^{2} =  c  > 0,$$ since $B^{-1}$ satisfies a Legendre condition as well.  \end{proof}
 
The next result is a variant of classical G\aa{}rding inequality, which can be proved in the standard way. A detailed proof can be found in \cite{silthesis}.
\begin{lemma}[G\aa{}rding inequality]\label{garding inequality}
 Let $A \in C\left( \overline{\Omega}; L(\Lambda^{k+1},\Lambda^{k+1})\right)$ satisfy the Legendre-Hadamard condition. 
Then there exist constants $\lambda_{0} > 0$ and $\lambda_{1} \geq 0$ such that  for all $ u \in W_{T}^{1,2}(\Omega, \Lambda^{k}),$ we have,
\begin{equation}\label{garding inequality in Wd2}
 \int_{\Omega}  \langle A (x)d u  , d u \rangle \geq \lambda_{0} \left\Vert du \right\Vert_{L^{2}}^{2} 
  - \lambda_{1} \left\Vert u \right\Vert_{L^{2}}^{2} . 
\end{equation}
Moreover, if $A$ has constant coefficients or $A \in L^{\infty}\left(\Omega; L(\Lambda^{k+1},\Lambda^{k+1})\right)$ satisfies the Legendre condition, then the inequality is true with 
$\lambda_{1}=0.$
\end{lemma}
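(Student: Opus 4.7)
The plan is to prove the constant-coefficient case via Fourier analysis, then pass to continuous variable coefficients by Korn's trick (partition of unity with coefficient freezing). The Legendre case is immediate: pointwise $\langle A(x)du, du\rangle \geq \gamma|du|^{2}$, so integrating over $\Omega$ yields the inequality with $\lambda_{0}=\gamma$ and $\lambda_{1}=0$, requiring only $A\in L^{\infty}$. The Legendre-Hadamard case is the substantive one.

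For constant $A$ satisfying Legendre-Hadamard with constant $\gamma$, and $u \in W_{T}^{1,2}(\Omega, \Lambda^{k})$, let $\tilde{u}$ denote its extension by zero to $\mathbb{R}^{n}$. The tangential condition $\nu\wedge u = 0$ is precisely the boundary integral that vanishes after integration by parts against test $(k+1)$-forms, so $d\tilde{u}$ coincides distributionally with the extension by zero of $du$; hence $\tilde{u}, d\tilde{u} \in L^{2}(\mathbb{R}^{n})$, Plancherel applies, and $\widehat{d\tilde{u}}(\xi) = i\xi \wedge \hat{\tilde{u}}(\xi)$ is pointwise in $\xi$ of the rank-one form $a\wedge b$ to which Legendre-Hadamard is designed to speak. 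Writing $\hat{\tilde{u}}(\xi) = \alpha(\xi)+i\beta(\xi)$ with real $\alpha, \beta$, the real part of $\langle A(\xi\wedge\hat{\tilde{u}}), \overline{\xi\wedge\hat{\tilde{u}}}\rangle$ splits as $\langle A(\xi\wedge\alpha), \xi\wedge\alpha\rangle + \langle A(\xi\wedge\beta), \xi\wedge\beta\rangle \geq \gamma(\lvert\xi\wedge\alpha\rvert^{2} + \lvert\xi\wedge\beta\rvert^{2}) = \gamma\lvert\xi\wedge\hat{\tilde{u}}\rvert^{2}$; integrating in $\xi$ and reusing Plancherel yields $\int_{\Omega}\langle A du, du\rangle \geq \gamma \int_{\Omega}\lvert du\rvert^{2}$, i.e.\ the claim with $\lambda_{1}=0$.

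For continuous $A$ on $\overline{\Omega}$, I would use Korn's trick. Given $\varepsilon > 0$, uniform continuity supplies a finite cover $\{B_{j}\}_{j=1}^{N}$ of $\overline{\Omega}$ by balls small enough that $\lvert A(x)-A(x_{j})\rvert<\varepsilon$ on $B_{j}$, together with a subordinate smooth partition $\{\phi_{j}\}$ satisfying $\sum_{j}\phi_{j}^{2}=1$ on $\overline{\Omega}$. The decisive observation is that $\phi_{j}u \in W_{T}^{1,2}(\Omega)$, since $\nu\wedge(\phi_{j}u)=\phi_{j}\,\nu\wedge u = 0$, so the preceding step applied to the constant operator $A(x_{j})$ on $\phi_{j}u$ gives $\int\langle A(x_{j})d(\phi_{j}u), d(\phi_{j}u)\rangle \geq \gamma\lVert d(\phi_{j}u)\rVert_{L^{2}}^{2}$. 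Replacing $A(x_{j})$ by $A(x)+(A(x_{j})-A(x))$ and absorbing the second piece using $\lvert A(x)-A(x_{j})\rvert<\varepsilon$ on $\operatorname*{supp}\phi_{j}$, expanding $d(\phi_{j}u)=\phi_{j}du + d\phi_{j}\wedge u$, and handling the cross terms by Young's inequality with parameter $\eta$, then summing over $j$ using $\sum\phi_{j}^{2}=1$ yields
\[
\int_{\Omega}\langle A du, du\rangle \geq \bigl[(\gamma-\varepsilon)(1-\eta)-\eta\bigr]\lVert du\rVert_{L^{2}}^{2} - C(\varepsilon,\eta)\lVert u\rVert_{L^{2}}^{2}.
\]
Choosing $\varepsilon,\eta$ sufficiently small produces the desired $\lambda_{0}>0$ and some $\lambda_{1}\geq 0$. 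The main obstacle is the gap between Legendre-Hadamard (ellipticity only on rank-one tensors $a\wedge b$) and genuine pointwise ellipticity; the Fourier-side identity $\widehat{du}(\xi)=i\xi\wedge\hat{u}(\xi)$ is the mechanism that converts one into the other, and the fact that only the tangential trace need vanish - rather than all of $u$ on $\partial\Omega$ - is exactly what the extension-by-zero identity $d\tilde{u}=\widetilde{du}$ demands. Once Korn's trick is set up, the partition-of-unity bookkeeping is routine.
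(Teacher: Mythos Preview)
Your proof is correct and is precisely the ``standard way'' the paper alludes to; note that the paper does not actually give a proof of this lemma but only remarks that it ``can be proved in the standard way'' and refers to the author's thesis for details. Your argument --- Plancherel for the constant-coefficient case exploiting $d\tilde u=\widetilde{du}$ under $\nu\wedge u=0$, then Korn's freezing trick via a partition of unity $\sum\phi_j^2=1$ for continuous coefficients --- is exactly the classical route one would expect that reference to contain.
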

\begin{remark}\label{gardingremark}
 If $A$ satisfies only Legendre-Hadamard condition, estimate \eqref{garding inequality in Wd2} does not hold if we replace $W_{T}^{1,2}$ by $W_{N}^{1,2},$ even when $A$ has constant 
 coefficients. However, it is easy to see that if $A$ satisfies the Legendre condition, then the estimate holds with $\lambda_{1}=0$ for $u \in W_{N}^{1,2}(\Omega, \Lambda^{k})$ as well. 
\end{remark}

We now need a Poincar\'{e} inequality which can be proved by a simple contradiction argument followed by scaling.
\begin{lemma}
 There exist a constants $c_{1} ,c_{2} > 0$ such that
 \begin{align*}
  \int_{B^{+}_{R}} \lvert  u \rvert^{2} \leq c_{1}R^{2} \int_{B^{+}_{R}} \lvert \nabla u \rvert^{2}, \quad \text{ for all } u \in W_{T}^{1,2}(B_{R}^{+} ; \Lambda^{k})\cup 
  W_{N}^{1,2}(B_{R}^{+} ; \Lambda^{k})
\end{align*}
and 
\begin{align*}
 \int_{B^{+}_{R}} \lvert  u - \left( u \right)_{R} \rvert^{2} \leq c_{2}R^{2} \int_{B^{+}_{R}} \lvert \nabla u \rvert^{2}, \quad 
 \text{ for all } u \in W^{1,2}(B_{R}^{+} ; \Lambda^{k}).
\end{align*}
\end{lemma}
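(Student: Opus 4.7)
The plan is the standard contradiction--compactness argument followed by rescaling. I would first prove both inequalities for $R=1$ and then scale.

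For the first inequality, suppose for contradiction that no such constant works when $R=1$. Then there is a sequence $\{u_{s}\} \subset W_{T}^{1,2}(B_{1}^{+};\Lambda^{k})$ (the $W_{N}^{1,2}$ case is identical) with $\lVert u_{s}\rVert_{L^{2}}=1$ and $\lVert\nabla u_{s}\rVert_{L^{2}}\to 0$. By Rellich--Kondrachov on the Lipschitz domain $B_{1}^{+}$, a subsequence converges weakly in $W^{1,2}$ and strongly in $L^{2}$ to some $u$ with $\lVert u\rVert_{L^{2}}=1$ and $\nabla u \equiv 0$, so $u$ is a constant $k$-form on $B_{1}^{+}$. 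Continuity of the trace transfers the boundary condition to the limit: $\nu\wedge u = 0$ on $\partial B_{1}^{+}$, and in particular on the curved portion $C_{1}$, where $\nu = x$ ranges over an open subset of the unit sphere $S^{n-1}$. Using the pointwise identity $\xi = \nu\lrcorner(\nu\wedge\xi) + \nu\wedge(\nu\lrcorner\xi)$ for $\lvert\nu\rvert=1$, the condition $\nu\wedge u = 0$ at a unit vector $\nu$ implies $u = \nu\wedge(\nu\lrcorner u)$, so $u$ lies in the ideal generated by $\nu$; as $\nu$ varies continuously in an open subset of $S^{n-1}$, a constant $u$ is forced to vanish. (The $W_{N}^{1,2}$ case runs the same way with $\nu\lrcorner u = 0$ in place of $\nu\wedge u = 0$; the degenerate cases $k=0$ and $k=n$ are covered by the appropriate one of the two boundary conditions.) This contradicts $\lVert u\rVert_{L^{2}}=1$.

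The second inequality follows by the same scheme: if it fails, one extracts $\{u_{s}\}\subset W^{1,2}(B_{1}^{+};\Lambda^{k})$ with $(u_{s})_{1}=0$, $\lVert u_{s}\rVert_{L^{2}}=1$, $\lVert \nabla u_{s}\rVert_{L^{2}}\to 0$; strong $L^{2}$ convergence passes the zero-average constraint to the limit, so the constant limit $u$ has average zero and must vanish, contradicting $\lVert u\rVert_{L^{2}}=1$. To pass from $R=1$ to general $R>0$, given $u$ on $B_{R}^{+}$ set $v(x)=u(Rx)$ on $B_{1}^{+}$; the boundary conditions transfer since $\nu\wedge$ and $\nu\lrcorner$ are pointwise linear operations and the outward normal to $B_{1}^{+}$ and $B_{R}^{+}$ coincide at corresponding points. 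The $R=1$ inequality applied to $v$ together with the change of variables $\int_{B_{1}^{+}} \lvert v\rvert^{2} = R^{-n}\int_{B_{R}^{+}}\lvert u\rvert^{2}$ and $\int_{B_{1}^{+}} \lvert\nabla v\rvert^{2} = R^{2-n}\int_{B_{R}^{+}}\lvert \nabla u\rvert^{2}$ yields the stated bound with the factor $R^{2}$.

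The only non-routine step is verifying that the partial boundary condition on $\partial B_{1}^{+}$ is enough to kill all nonzero constant $k$-forms; the curved part $C_{1}$ supplies $\nu$ in a full open subset of $S^{n-1}$, so the algebraic argument above goes through. Compactness, trace continuity, and the change of variables are entirely standard.
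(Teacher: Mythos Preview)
Your proposal is correct and follows precisely the route the paper indicates: the paper merely remarks that the lemma ``can be proved by a simple contradiction argument followed by scaling'' and gives no further details, and you have supplied exactly that argument (Rellich--Kondrachov compactness to produce a constant limit, the algebraic observation that $\nu$ ranging over the curved hemisphere forces a constant form with $\nu\wedge u=0$ or $\nu\lrcorner u=0$ to vanish, and the change of variables $v(x)=u(Rx)$ for the scaling). Nothing is missing.
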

\subsection{Estimates for constant coefficient operator}
We begin by deriving estimates in half-balls for constant coefficient operators.
\subsubsection{Boundary Caccioppoli inequality}
The most crucial ingredients for these estimates are the Caccioppoli type inequalities on half-balls. 
\begin{theorem}[Caccioppoli inequality]\label{caccioppoli inequality half balls constant coefficients}
 Let $A:\Lambda^{k+1}\rightarrow \Lambda^{k+1}$ satisfy the Legendre-Hadamard condition. Let  $u \in W^{1,2}(B_{R}^{+} ; \Lambda^{k})$ with $\nu \wedge u = 0$ on $\Gamma_{R}$ satisfy, 
 for all $\psi \in W_{T, flat}^{1,2}(B_{R}^{+} ; \Lambda^{k}),$
 \begin{align}\label{constant coefficient boundary equation}
 \int_{B_{R}^{+}} \langle A(du) ; d\psi \rangle + \int_{B_{R}^{+}} \langle \delta u ; \delta \psi  \rangle = 0.
   \end{align}
   Then there exists a constant $c>0$ such that for every $0 < \rho  < R,$ we have the following 
  boundary Caccioppoli inequality,
  \begin{align}\label{boundary caccioppoli}
   \int_{B^{+}_{\rho}} \lvert \nabla u \rvert^{2} \leq \frac{c}{(R -\rho)^{2}}\left\lbrace \sum_{\substack{I \in \mathcal{T}^{k}\\ n \notin I}}
   \int_{B^{+}_{R}\setminus B^{+}_{\rho}} \left\lvert  u^{I} \right\rvert^{2} 
  + \sum_{\substack{I \in \mathcal{T}^{k}\\ n \in I}}\int_{B^{+}_{R}\setminus B^{+}_{\rho}} \left\lvert  u^{I} - \xi^{I}  \right\rvert^{2}\right\rbrace ,
  \end{align}
  for any collection of constants $\xi^{I} \in \mathbb{R}$ for each $I \in \mathcal{T}^{k}$ with $n \in I.$
\end{theorem}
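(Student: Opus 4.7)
The strategy is a Caccioppoli-style cutoff argument whose main subtlety is choosing a test function that remains admissible in $W_{T, flat}^{1,2}$. Fix $\eta \in C_{c}^{\infty}(B_{R})$ with $\eta \equiv 1$ on $B_{\rho}$, $0 \leq \eta \leq 1$, and $|\nabla\eta| \leq 2/(R-\rho)$. Given the constants $\xi^{I}$ with $n \in I$, form the purely normal constant $k$-form $\xi := \sum_{I\in\mathcal{T}^{k},\, n\in I} \xi^{I} e^{I}$, set $v := u - \xi$, and take $\psi := \eta^{2} v$ as test function. Admissibility is automatic: since $e_{n}\wedge e^{I}=0$ whenever $n \in I$, we have $e_{n}\wedge\xi\equiv 0$, so $e_{n}\wedge\psi = \eta^{2}(e_{n}\wedge u) = 0$ on $\Gamma_{R}$, while $\psi = 0$ on $C_{R}$ by the support of $\eta$. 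Tangential constants cannot be subtracted without breaking this condition, which is precisely why only the $n\in I$ components of $\xi$ appear in the statement.

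Plugging $\psi$ into \eqref{constant coefficient boundary equation}, using the Leibniz rules $d(\eta^{2}v) = 2\eta\, d\eta\wedge v + \eta^{2}dv$ and $\delta(\eta^{2}v) = \eta^{2}\delta v - 2\eta\, d\eta\lrcorner v$ (with $dv = du$ and $\delta v = \delta u$ since $\xi$ is constant), and then rewriting each remaining occurrence of $\eta\, du$ and $\eta\,\delta u$ via $\eta\, du = d\tilde u - d\eta\wedge v$ and $\eta\,\delta u = \delta\tilde u + d\eta\lrcorner v$ where $\tilde u := \eta v \in W_{T, flat}^{1,2}$, all mixed terms cancel and the weak equation collapses to the clean identity
\[
 \int_{B_{R}^{+}} \langle A(d\tilde u); d\tilde u\rangle + \int_{B_{R}^{+}} |\delta\tilde u|^{2} = \int_{B_{R}^{+}} \langle A(d\eta\wedge v); d\eta\wedge v\rangle + \int_{B_{R}^{+}} |d\eta\lrcorner v|^{2}.
\]
This cancellation is the payoff of the specific form of $\xi$. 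Applying the G\aa{}rding inequality of Lemma \ref{garding inequality} to $\tilde u$ (with $\lambda_{1}=0$, as $A$ has constant coefficients) and bounding the right-hand side by $C(R-\rho)^{-2}\int_{B_{R}^{+}\setminus B_{\rho}^{+}} |v|^{2}$ yields
\[
\int_{B_{R}^{+}} |d\tilde u|^{2} + \int_{B_{R}^{+}} |\delta\tilde u|^{2} \leq \frac{C}{(R-\rho)^{2}}\int_{B_{R}^{+}\setminus B_{\rho}^{+}} |v|^{2}.
\]
The Bochner--Gaffney identity $\int|\nabla\tilde u|^{2} = \int|d\tilde u|^{2} + \int|\delta\tilde u|^{2}$, valid on $B_{R}^{+}$ for forms in $W_{T, flat}^{1,2}$ by odd reflection of tangential components across $\Gamma_{R}$, even reflection of normal ones, and zero extension past $C_{R}$, then upgrades this to full gradient control. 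Restricting to $B_{\rho}^{+}$, where $\eta\equiv 1$ makes $\nabla u = \nabla\tilde u$, and splitting $|v|^{2} = \sum_{n\notin I}|u^{I}|^{2} + \sum_{n\in I}|u^{I}-\xi^{I}|^{2}$ gives \eqref{boundary caccioppoli}.

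The main obstacle is precisely this gradient-recovery step: under only the Legendre--Hadamard (not Legendre) hypothesis, $\langle A(d\tilde u); d\tilde u\rangle$ does not pointwise dominate $|d\tilde u|^{2}$, so ellipticity is available only in its integrated G\aa{}rding form, and passing from $(d,\delta)$-control to $\nabla$-control therefore demands a boundary-compatible Gaffney identity. The reflection extension succeeds precisely because the tangential part of $\tilde u$ vanishes on $\Gamma_{R}$ and all of $\tilde u$ vanishes on $C_{R}$, both consequences of the admissibility choice in the first paragraph; thus the algebraic cancellation and the Bochner--Gaffney identity both trace back to the same selection of test function $\psi$.
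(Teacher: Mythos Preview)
Your argument follows essentially the same route as the paper's: the same cutoff $\eta$, the same test function $\psi=\eta^{2}(u-\xi)$ with $\xi$ purely normal, and the same appeal to G\aa{}rding plus Gaffney to pass from $(d,\delta)$-control of $\tilde u=\eta(u-\xi)$ to full gradient control. The paper simply cites the Gaffney inequality on the contractible half-ball rather than your reflection justification, but the content is the same.

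The one genuine slip is your ``clean identity''. Carrying out your substitution, the $d$-part produces
\[
\int_{B_R^+}\langle A\,d\tilde u;d\tilde u\rangle \;+\;\int_{B_R^+}\langle A\,d\tilde u;d\eta\wedge v\rangle \;-\;\int_{B_R^+}\langle A(d\eta\wedge v);d\tilde u\rangle \;-\;\int_{B_R^+}\langle A(d\eta\wedge v);d\eta\wedge v\rangle,
\]
and the two middle terms cancel only if $A$ is symmetric, which the Legendre--Hadamard hypothesis does not impose. For general $A$ these cross terms survive and must be absorbed by Young's inequality with $\varepsilon$, giving an estimate rather than an identity---which is exactly how the paper proceeds (``standard calculations and Young's inequality with $\varepsilon>0$''). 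This is a one-line fix and the remainder of your argument goes through unchanged.
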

\begin{proof}
 Choose $\eta \in C_{c}^{\infty}(B_{R})$ such that 
$ 0 \leq \eta \leq 1,$ $\eta \equiv 1 $ in $B_{\rho} $ and $ \lvert D\eta \rvert \leq \frac{2}{R - \rho}$ and
 define $\widetilde{u}: B^{+}_{R} \rightarrow \Lambda^{k}$ as,
\begin{align*}
 \widetilde{u}^{I} = \left\lbrace \begin{aligned}
                               &u^{I}  &\text{ if } n \notin I,\\
                               & u^{I} - \xi^{I}  \quad &\text{ if } n \in I,
                              \end{aligned}\right. \qquad \text{ for every } I \in \mathcal{T}^{k}.
\end{align*}
Substituting $\eta^{2}\widetilde{u} \in W_{T, flat}^{1,2}(B_{R}^{+} ; \Lambda^{k})$ 
for the test function $\psi$ in \eqref{constant coefficient boundary equation}, standard calculations and Young's inequality with $\varepsilon > 0$ implies the estimate, 
\begin{align*}
\int_{B_{R}^{+}} \langle A(d (\eta\widetilde{u})) ;  d(\eta\widetilde{u}) \rangle  
+ \int_{B_{R}^{+}}  \langle \delta (\eta \widetilde{u}) ; &  \delta ( \eta\widetilde{u} )  \rangle 
\\&\leq c\varepsilon \int_{B_{R}^{+}} \lvert \eta \rvert^{2} \lvert \nabla \widetilde{u} \rvert^{2}
+ c \int_{B_{R}^{+}} \lvert D\eta \rvert^{2} \lvert \widetilde{u} \rvert^{2} .
\end{align*}
Since the half ball $B_{R}^{+}$ is contractible, combining this with the Gaffney inequality and the G\aa{}rding inequality
\begin{align*}
&\int_{B_{R}^{+}} \lvert \eta \rvert^{2} \lvert \nabla \widetilde{u} \rvert^{2} = \int_{B_{R}^{+}} \lvert  \nabla \left( \eta \widetilde{u} \right) \rvert^{2} 
- \int_{B_{R}^{+}} \langle 2\eta \nabla \widetilde{u}; D\eta \otimes \widetilde{u} \rangle \\
&\leq c \left\lbrace \int_{B_{R}^{+}} \langle A(d (\eta\widetilde{u})) ;  d(\eta\widetilde{u}) \rangle  
+ \int_{B_{R}^{+}}  \langle \delta (\eta \widetilde{u}) ;  \delta ( \eta\widetilde{u} )  \rangle  \right\rbrace 
- \int_{B_{R}^{+}} \langle 2\eta \nabla \widetilde{u}; D\eta \otimes \widetilde{u} \rangle \\
&\leq c\varepsilon \int_{B_{R}^{+}} \lvert \eta \rvert^{2} \lvert \nabla \widetilde{u} \rvert^{2}
+ c \int_{B_{R}^{+}} \lvert D\eta \rvert^{2} \lvert \widetilde{u} \rvert^{2} 
\end{align*}
Choosing $\varepsilon$ small and using the facts that $\eta \equiv 1$ on $B_{\rho},$ $D\eta$ vanishes outside $B^{+}_{R} \setminus B^{+}_{\rho}$ 
and $\lvert D\eta \rvert \leq \frac{2}{R-\rho},$ we get 
\begin{align*}
 \int_{B^{+}_{\rho}} \lvert \nabla \widetilde{u} \rvert^{2} \leq \int_{B_{R}^{+}} \lvert \eta \rvert^{2} \lvert \nabla \widetilde{u} \rvert^{2} 
 \leq \frac{c}{ \left( R-\rho \right)^{2}} \int_{B^{+}_{R}\setminus B^{+}_{\rho}} \lvert \widetilde{u} \rvert^{2}  
 \leq \frac{c}{ \left( R-\rho \right)^{2}}  \int_{B^{+}_{R}\setminus B^{+}_{\rho}} \lvert \widetilde{u} \rvert^{2} .
\end{align*}
This finishes the proof. 
\end{proof}\smallskip 

\begin{remark}\label{caccioppoli normal}
If $A$  satisfies the Legendre condition, then similar arguments show that for any $u \in W^{1,2}(B_{R}^{+} ; \Lambda^{k})$ with $\nu \lrcorner u = 0$ on $\Gamma_{R}$ satisfying 
\eqref{constant coefficient boundary equation} for all $\psi \in W_{N, flat}^{1,2}(B_{R}^{+} ; \Lambda^{k})$ and for every $0 < \rho  < R,$ we have 
\begin{align*}
   \int_{B^{+}_{\rho}} \lvert \nabla u \rvert^{2} \leq \frac{c}{(R -\rho)^{2}}\left\lbrace \sum_{\substack{I \in \mathcal{T}^{k}\\ n \notin I}}
   \int_{B^{+}_{R}\setminus B^{+}_{\rho}} \left\lvert  u^{I} - \xi^{I}  \right\rvert^{2} 
  + \sum_{\substack{I \in \mathcal{T}^{k}\\ n \in I}}\int_{B^{+}_{R}\setminus B^{+}_{\rho}} \left\lvert  u^{I} \right\rvert^{2} \right\rbrace ,
  \end{align*}
  for any collection of constants $\xi^{I} \in \mathbb{R}$ for each $I \in \mathcal{T}^{k}$ with $n \notin I.$
  In view of remark \ref{gardingremark}, the stronger ellipticity assumption on $A$ is necessary.
\end{remark}

\subsubsection{$L^{2}$ estimates}
Caccioppoli inequalities lead to $L^2$ estimates.
\begin{theorem}[$L^{2}$ estimates for constant coefficients]\label{L2estimatesconstantcoeff}
  Let $A:\Lambda^{k+1}\rightarrow \Lambda^{k+1}$ satisfy the Legendre-Hadamard condition. Let  $u \in W^{1,2}(B_{R}^{+} ; \Lambda^{k})$ with $\nu \wedge u = 0$ on $\Gamma_{R}$  satisfy, 
 for all $\psi \in W_{T, flat}^{1,2}(B_{R}^{+} ; \Lambda^{k}),$
 \begin{align}\label{l2estimateconstantequation}
 \int_{B_{R}^{+}} \langle A(du) ; d\psi \rangle + \int_{B_{R}^{+}} \langle \delta u ; \delta \psi  \rangle = 0.
   \end{align}
  Then for every $m \geq 2,$ $u \in W^{m,2}(B_{R/2^{m}}^{+} ; \Lambda^{k})$ and there exist constants $c_{m}, \widetilde{c}_{m} > 0$ such that  we have the estimates
  \begin{equation}\label{l2estimate}
   \int_{B^{+}_{R/2^{m}}} \lvert D^{m} u \rvert^{2} \leq \frac{\widetilde{c}_{m}}{(R)^{2^{m-1}}}   \int_{B^{+}_{R}} \left\lvert \nabla u \right\rvert^{2} \leq \frac{c_{m}}{(R)^{2^{m}}}   \int_{B^{+}_{R}} \left\lvert  u \right\rvert^{2}. 
  \end{equation}
  \end{theorem}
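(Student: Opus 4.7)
The plan is to establish $W^{m,2}$ regularity by induction on $m$, with the case $m=2$ carrying essentially all the real work, and to iterate the boundary Caccioppoli inequality (Theorem \ref{caccioppoli inequality half balls constant coefficients}) combined with a tangential difference quotient argument.

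For the base case, I would introduce the tangential difference quotient $\tau_{h,s} u(x) := h^{-1}(u(x+he_{s}) - u(x))$ for $s \in \{1,\ldots,n-1\}$ and $h$ small enough that the translated half-ball stays inside $B_R^{+}$. Two facts are crucial and cost little: translation in a direction $e_s$ tangential to $\Gamma_R$ preserves both the flat boundary and the vanishing of $\nu\wedge u$ there (since $\nu = e_n$ is constant), so $\tau_{h,s} u$ again lies in the class of admissible functions with $\nu\wedge \tau_{h,s}u = 0$ on a slightly smaller $\Gamma_{R'}$; and by constant-coefficient translation invariance, $\tau_{h,s} u$ satisfies the same variational identity \eqref{l2estimateconstantequation}. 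Applying Theorem \ref{caccioppoli inequality half balls constant coefficients} to $\tau_{h,s} u$ on the pair of half-balls $B_{R/4}^{+} \subset B_{R/2}^{+}$, choosing the constants $\xi^{I}$ to be the averages of $u^I$ on the annular region, and bounding the right-hand side uniformly in $h$ via the standard difference quotient--$L^2$ estimate, I obtain bounds on $\nabla \tau_{h,s} u$ in $L^2(B_{R/4}^{+})$ independent of $h$. Passing $h \to 0$ yields $\partial_s u \in W^{1,2}(B_{R/4}^{+};\Lambda^k)$ for every tangential $s$, together with the desired $R^{-2}$-type estimate.

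This controls every second-order derivative $\partial_i\partial_j u$ that has at least one tangential index. The missing derivative $\partial_n\partial_n u$ must be extracted algebraically from the strong form of the PDE. Rewriting $\int \langle A\,du\,;\,d\psi\rangle + \int \langle \delta u\,;\,\delta\psi\rangle = 0$ as $\delta(A\,du) + d\,\delta u = 0$ componentwise, the coefficient of $\partial_n \partial_n u^I$ on each component is, up to constants depending only on $A$, of the form $\langle A(e_n\wedge \alpha)\,;\,e_n\wedge\alpha\rangle$ for suitable $\alpha \in \Lambda^{k-1}$ together with a $|e_n\lrcorner e_n\wedge e^I|^2$ contribution from the $d\delta$ term; the Legendre--Hadamard condition (together with the interior-product structure, cf.\ Lemma \ref{ellipticitylemma}) ensures this coefficient is bounded below by a positive constant, letting me solve for $\partial_n\partial_n u^I$ in terms of the already controlled mixed and tangential derivatives, yielding $u \in W^{2,2}(B_{R/4}^{+};\Lambda^k)$ with the estimate of \eqref{l2estimate} for $m=2$.

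For the inductive step, assume the result through order $m$. For each tangential index $s<n$, the form $\partial_s u$ still satisfies \eqref{l2estimateconstantequation} (by translation invariance applied to $\tau_{h,s}u$ and passing to the limit using the $m=2$ bound on a slightly larger half-ball) and still satisfies $\nu\wedge \partial_s u = 0$ on the flat part. Hence I may re-apply the $m=2$ argument to $\partial_s u$ on $B_{R/2^{m+1}}^+ \subset B_{R/2^{m}}^+$ to pick up one more derivative, absorbing the resulting $R^{-2}$ factor, and recover any $\partial_n$-derivatives appearing at the next order by differentiating the PDE tangentially and solving again for the leading normal term. The main technical obstacle lies in this step: one must verify that at each level of the iteration the boundary condition $\nu\wedge(\cdot) = 0$ continues to hold after taking tangential derivatives (straightforward on $\Gamma_R$ since $\nu$ is constant) and, more delicately, that the algebraic solvability for $\partial_n\partial_n$ persists for the higher-order derivatives obtained after differentiating the PDE, which again reduces to the same Legendre--Hadamard invertibility of the symbol in the $e_n$-direction. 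Chaining these $m$ Caccioppoli steps with one final application on $\nabla u \to u$ produces the two estimates in \eqref{l2estimate}.
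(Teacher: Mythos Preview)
Your proposal is correct and follows essentially the same route as the paper: tangential difference quotients combined with the boundary Caccioppoli inequality to control all second derivatives with at least one tangential index, followed by algebraic extraction of $\partial_n\partial_n u$ from the equation using the invertibility of the symbol in the $e_n$-direction (Lemma \ref{ellipticitylemma}), and then iteration. One small imprecision: your description of the leading normal coefficient as $\langle A(e_n\wedge\alpha);e_n\wedge\alpha\rangle$ with $\alpha\in\Lambda^{k-1}$ is slightly off; the paper packages this more cleanly by rewriting the weak form as $\int\langle\widetilde{A}\nabla u;\nabla\psi\rangle$ and showing that the block $\widetilde{A}^{nn}\xi\cdot\xi = \langle A(e_n\wedge\xi);e_n\wedge\xi\rangle + |e_n\lrcorner\xi|^2$ is positive definite for $\xi\in\Lambda^k$, which is exactly Lemma \ref{ellipticitylemma} with $B=\mathbf{I}$.
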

\begin{remark}
 Same estimates hold true for $u \in W^{1,2}(B_{R}^{+} ; \Lambda^{k})$ with $\nu \lrcorner u = 0$ on $\Gamma_{R}$, satisfying \eqref{l2estimateconstantequation} for every 
 $\psi \in W_{N, flat}^{1,2}(B_{R}^{+} ; \Lambda^{k})$ if $A$ satisfies the Legendre condition, in view of remark \ref{caccioppoli normal}.
\end{remark}

\begin{proof} We only show \eqref{l2estimate} for $m=2.$ Using the difference quotient $\tau_{h,s}u(x) = \frac{1}{h}\left\lbrace 
u (x + he_{s}) - u(x)\right\rbrace ,$ for every $s=1,\ldots, n-1,$ and the Caccioppoli inequality for $\tau_{h,s}u$ and $u$, we obtain, 
\begin{align*}
 \int_{B^{+}_{\frac{R}{2^{2}}}} \left\lvert \nabla \left( \tau_{h,s}u \right) \right\rvert^{2} 
 \leq \frac{c}{(R)^2}   \int_{B^{+}_{R/2}} \left\lvert \tau_{h,s}u \right\rvert^{2} \leq \frac{c}{(R)^2}   \int_{B^{+}_{R/2}} \lvert \nabla u \rvert^{2}
 \leq \frac{c}{(R)^4} \int_{B^{+}_{R}} \left\lvert  u \right\rvert^{2}.
\end{align*}
This implies, 
 \begin{equation*}
   \int_{B^{+}_{R/2^{2}}} \lvert D_{ij} u \rvert^{2}  \leq \frac{c}{(R)^2}   \int_{B^{+}_{R/2}} \lvert \nabla u \rvert^{2} \leq \frac{c}{(R)^{2^{2}}}   \int_{B^{+}_{R}} \left\lvert  u \right\rvert^{2},
  \end{equation*}
  for all $i,j =1, \ldots, n$ such that $(i,j) \neq (n,n).$
  To estimate the $D_{nn}$ derivative, first we rewrite the system \eqref{l2estimateconstantequation} as a system in terms of gradients.
  For every $\psi \in  W_{T, flat}^{1,2}(B_{R}^{+} ; \Lambda^{k}),$ we have, 
  \begin{align}\label{gradient system}
   \int_{B_{R}^{+}} \langle A(du) ; d\psi \rangle + \int_{B_{R}^{+}} \langle \delta u ; \delta \psi  \rangle 
   = \int_{B_{R}^{+}} \langle \widetilde{A}\nabla u ; \nabla \psi \rangle, 
  \end{align}
where $\widetilde{A}:\mathbb{R}^{\tbinom{n}{k}\times n} \rightarrow \mathbb{R}^{\tbinom{n}{k}\times n}$ is the linear map, defined by the pointwise algebraic identities  
$$\langle \widetilde{A} a_{1} \otimes b_{1} ;  a_{2} \otimes b_{2} \rangle = \langle A\left( a_{1} \wedge b_{1} \right) ; a_{2} \wedge b_{2} \rangle 
+ \langle a_{1} \lrcorner b_{1} ; a_{2} \lrcorner b_{2}  \rangle,  $$    for every $ a_{1}, a_{2} \in \Lambda^{1}$, viewed also as vectors in $\mathbb{R}^{n}$  and $b_{1}, b_{2} \in \Lambda^{k},$
viewed also as vectors in $\mathbb{R}^{\tbinom{n}{k}}$. Given $\widetilde{A},$ we also define the maps $\widetilde{A}^{pq}:\mathbb{R}^{\tbinom{n}{k}} \rightarrow \mathbb{R}^{\tbinom{n}{k}} $ for every $p,q= 1,\ldots, n,$
 by the identities, 
 \begin{equation}\label{Apqdefinition} \langle \widetilde{A}^{pq}\xi ;\eta \rangle =\sum_{\alpha,\beta \in \mathcal{T}^{k}} \widetilde{A}^{pq}_{\alpha\beta} 
 \xi^{\alpha}\eta^{\beta} 
 = \langle \widetilde{A} (e_{p} \otimes \xi) ; e_{q} \otimes \eta \rangle, \qquad 
 \text{ for every } \xi,\eta \in \mathbb{R}^{\tbinom{n}{k}}.\end{equation}
Now, for every $\xi \in \mathbb{R}^{\tbinom{n}{k}},$ by lemma \ref{ellipticitylemma}, we get,
  \begin{align*}
  \langle \widetilde{A}^{nn} \xi ; \xi \rangle &= \langle \widetilde{A} (e_{n} \otimes \xi) ; e_{n} \otimes \xi \rangle = \langle A\left( e_{n} \wedge \xi \right) ; e_{n} \wedge \xi \rangle 
+ \langle e_{n} \lrcorner \xi ; e_{n} \lrcorner \xi \rangle \\&\geq \gamma_{A}\lvert e_{n} \wedge \xi \rvert^{2} + \left\lvert e_{n} \lrcorner \xi \right\rvert^{2} 
\geq c_{1} \left\lvert \xi \right\rvert^{2}, 
 \end{align*}
 proving that $\widetilde{A}^{nn}$ is invertible.  
 Choosing $\psi \in C_{c}^{\infty}(B_{R/2^{2}}^{+} ; \Lambda^{k})$ in \eqref{l2estimateconstantequation}, using \eqref{gradient system} and integrating by parts,   
 $u \in W^{1,2}(B_{R}^{+} ; \Lambda^{k})$  satisfy,  for all $\psi \in C_{c}^{\infty}(B_{R/2^{2}}^{+} ; \Lambda^{k}),$
 \begin{align*}
  \int_{B_{R/2^{2}}^{+}}\sum_{\alpha,\beta \in \mathcal{T}^{k}}   \widetilde{A}^{nn}_{\alpha\beta}  \frac{ \partial u^{\alpha}}{\partial x_n}\frac{\partial \psi^{\beta} }{\partial x_n}
      &= - \sum_{ \substack{ p,q = 1, \ldots, n \\ (p,q)\neq (n,n) \\\alpha,\beta \in \mathcal{T}^{k}}} \int_{B_{R/2^{2}}^{+}} \widetilde{A}^{pq}_{\alpha\beta} 
   \frac{ \partial u^{\alpha}}{\partial x_p}\frac{\partial \psi^{\beta} }{\partial x_q} \\ &=\sum_{ \substack{ p,q = 1, \ldots, n \\ (p,q)\neq (n,n) 
   \\\alpha,\beta \in \mathcal{T}^{k}}} 
  \int_{B_{R/2^{2}}^{+}} \frac{\partial}{\partial x_{q}}\left( \widetilde{A}^{pq}_{\alpha\beta} 
  \frac{ \partial u^{\alpha}}{\partial x_p}\right) \psi^{\beta}.
  \end{align*}
Since $\frac{\partial}{\partial x_{q}}\left( \widetilde{A}^{pq}_{\alpha\beta} 
  \frac{ \partial u^{\alpha}}{\partial x_p}\right) \in L^{2} $ for all choice of $p,q = 1, \ldots, n , (p,q)\neq (n,n) $ and for all $\alpha,\beta \in \mathcal{T}^{k},$ 
 and $\widetilde{A}^{nn}$ is invertible, this implies the estimate for $D_{nn} u.$  \end{proof}
\subsubsection{Decay estimates for the constant coefficient operator}
\begin{theorem}\label{decay estimates}
Let $A:\Lambda^{k+1}\rightarrow \Lambda^{k+1}$ satisfy the Legendre-Hadamard condition. Let  $u \in W_{T, flat}^{1,2}(B_{R}^{+} ; \Lambda^{k})$  satisfy, 
 for all $\psi \in W_{T, flat}^{1,2}(B_{R}^{+} ; \Lambda^{k}),$
 \begin{align}\label{decayestimateconstantequation}
 \int_{B_{R}^{+}} \langle A(du) ; d\psi \rangle + \int_{B_{R}^{+}} \langle \delta u ; \delta \psi  \rangle = 0.
   \end{align}
Then, for every $0 < \rho < R,$ the following decay estimates hold true.
 \begin{align}
  \int_{B^{+}_{\rho}} \left\lvert  u \right\rvert^{2} &\leq c \left( \frac{\rho}{R}\right)^{n} \int_{B^{+}_{R}} \left\lvert  u \right\rvert^{2}, \label{decay of u}\\
  \int_{B^{+}_{\rho}} \left\lvert \nabla u \right\rvert^{2} &\leq c \left( \frac{\rho}{R}\right)^{n} \int_{B^{+}_{R}} \left\lvert   \nabla u \right\rvert^{2} , 
  \label{decay of Du}\\
 \int_{B^{+}_{\rho}} \left\lvert \nabla u -  \left( \nabla u \right)_{\rho} \right\rvert^{2} &\leq c \left( \frac{\rho}{R}\right)^{n+2} 
 \int_{B^{+}_{R}} \left\lvert \nabla u -  \left( \nabla u \right)_{R} \right\rvert^{2}. \label{decay of Du - Du average}
 \end{align}
\end{theorem}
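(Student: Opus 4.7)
The plan is to obtain pointwise $L^\infty$ bounds on $u$, $\nabla u$ and $D^2 u$ on a fixed inner half-ball $B_{R/2^{m+1}}^{+}$ for $m$ large, and conclude the three decay estimates by elementary integration or a Taylor-type pointwise argument. For $\rho \geq R/2^{m+1}$ all three estimates are trivial by absorbing into a large constant, so I focus on small $\rho$. Iterating the $L^2$ estimates of Theorem \ref{L2estimatesconstantcoeff} gives $\|D^j u\|^2_{L^2(B_{R/2^j}^{+})} \leq c_j R^{-2j} \|u\|^2_{L^2(B_R^{+})}$ and the analogous bound with $R^{-2(j-1)}\|\nabla u\|^2_{L^2(B_R^{+})}$ on the right. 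Choosing $m > n/2 + 1$ and applying a scaled Sobolev embedding on the Lipschitz half-ball $B_{R/2^{m+1}}^{+}$ yields
$$\|u\|^2_{L^\infty(B_{R/2^{m+1}}^{+})} \leq \frac{c}{R^n}\|u\|^2_{L^2(B_R^{+})}, \qquad \|\nabla u\|^2_{L^\infty(B_{R/2^{m+1}}^{+})} \leq \frac{c}{R^n}\|\nabla u\|^2_{L^2(B_R^{+})}.$$
Multiplying by $|B_\rho^{+}| \leq c\rho^n$ and using $B_\rho^{+} \subset B_{R/2^{m+1}}^{+}$ immediately gives \eqref{decay of u} and \eqref{decay of Du}.

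For \eqref{decay of Du - Du average}, the pointwise Taylor bound $|\nabla u(x) - (\nabla u)_\rho|^2 \leq c\rho^2 \|D^2 u\|^2_{L^\infty(B_\rho^{+})}$ reduces matters to
$$\|D^2 u\|^2_{L^\infty(B_{R/2^{m+1}}^{+})} \leq \frac{c}{R^{n+2}} \int_{B_R^{+}} |\nabla u - (\nabla u)_R|^2.$$
The key is to apply Theorem \ref{caccioppoli inequality half balls constant coefficients} not to $u$ but to the tangential derivatives $v_s := \partial_s u$ for $s = 1, \ldots, n-1$: these satisfy the same constant-coefficient equation \eqref{decayestimateconstantequation} (by constant coefficients of $A$ and differentiation in tangential variables) with $e_n \wedge v_s = 0$ on $\Gamma_R$, so Caccioppoli applies. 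Here the full hypothesis $u \in W_{T, flat}^{1,2}(B_R^{+};\Lambda^k)$ becomes essential: since $u = 0$ on $C_R$ and $u^I = 0$ on $\Gamma_R$ for every $I$ with $n \notin I$, the divergence theorem gives $(\partial_s u^I)_R = |B_R^{+}|^{-1}\int_{\partial B_R^{+}} u^I \nu_s\, dS = 0$ for every $I$ and every tangential $s$, because the contribution from $C_R$ vanishes (by $u = 0$) and the contribution from $\Gamma_R$ vanishes (by $\nu_s = 0$). This identity lets us take $\xi^I = 0$ in Caccioppoli for $v_s$ and obtain $\int_{B_{R/2}^{+}} |\nabla \partial_s u|^2 \leq cR^{-2} \int_{B_R^{+}} |\partial_s u|^2 \leq cR^{-2} \int_{B_R^{+}} |\nabla u - (\nabla u)_R|^2$. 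Iterating Caccioppoli on further tangential derivatives, and recovering the purely normal second derivative $\partial_n^2 u$ from the PDE via the invertibility of $\widetilde{A}^{nn}$ (exactly as in the proof of Theorem \ref{L2estimatesconstantcoeff}), one arrives at $\|D^m u\|^2_{L^2(B_{R/2^m}^{+})} \leq cR^{-2(m-1)} \int_{B_R^{+}} |\nabla u - (\nabla u)_R|^2$ for every $m \geq 2$, and Sobolev embedding closes the argument.

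The principal obstacle is the bookkeeping of this iteration in the proof of \eqref{decay of Du - Du average}: verifying at each order that the higher tangential derivatives of $u$ solve the weak equation against a sufficient class of test functions for Caccioppoli to apply, and propagating the averaging identity carefully enough that the final bound remains in terms of $\int|\nabla u - (\nabla u)_R|^2$ rather than the weaker $\int|\nabla u|^2$ or $\int |u|^2$.
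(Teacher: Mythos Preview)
Your argument is correct. For \eqref{decay of u} and \eqref{decay of Du} you follow exactly the paper's line: $L^{2}$ regularity plus Sobolev embedding to get $L^{\infty}$ control on an inner half-ball, then multiply by $\rho^{n}$.

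For \eqref{decay of Du - Du average} your route differs from the paper's. The paper does not differentiate $u$; instead it subtracts from each component $u^{I}$ with $n\in I$ the linear function $\sum_{i} x_{i}\,(\partial_{i}u^{I})_{R}$ to produce $\widetilde u$ with $D^{2}\widetilde u=D^{2}u$, $\nu\wedge\widetilde u=0$ on $\Gamma_{R}$, and $\widetilde u$ still solving the weak equation (the correction is affine, so $d$ and $\delta$ of it are constants, which annihilate against test functions in $W^{1,2}_{T,flat}$). Then the already-proven estimate $\sup|D^{2}\widetilde u|^{2}\le c\,\lVert\nabla\widetilde u\rVert_{L^{2}}^{2}$ applies, and the paper observes that $(\nabla u^{I})_{R}=0$ for $n\notin I$ (since such $u^{I}\in W_{0}^{1,2}(B_{R}^{+})$), giving $\lVert\nabla\widetilde u\rVert_{L^{2}}^{2}=\int|\nabla u-(\nabla u)_{R}|^{2}$. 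Your approach instead applies Caccioppoli to the tangential derivatives $\partial_{s}u$ and invokes the complementary vanishing $(\partial_{s}u^{I})_{R}=0$ for \emph{all} $I$ and $s<n$, which is indeed a direct consequence of $u=0$ on $C_{R}$ and $\nu_{s}=0$ on $\Gamma_{R}$. Both observations exploit the full $W^{1,2}_{T,flat}$ hypothesis in the same spirit; the paper's version is slightly more economical since it reduces \eqref{decay of Du - Du average} to a single application of Theorem~\ref{L2estimatesconstantcoeff} for $\widetilde u$, whereas you must redo the first Caccioppoli step by hand before iterating. Your worry about ``propagating the averaging identity'' at higher orders is unfounded: once $\lVert D^{2}u\rVert_{L^{2}(B_{R/2}^{+})}^{2}\le cR^{-2}\int|\nabla u-(\nabla u)_{R}|^{2}$ is in hand, the remaining steps are just the standard iteration of Theorem~\ref{L2estimatesconstantcoeff} and need no further averaging.
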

\begin{remark}
 As before, analogous estimates hold for $u \in W_{N, flat}^{1,2}$  satisfying \eqref{decayestimateconstantequation} for 
 all $\psi \in W_{N, flat}^{1,2}(B_{R}^{+} ; \Lambda^{k}),$ when $A$ satisfies the Legendre condition.
\end{remark}

\begin{proof}
Using the rescaled function $\widetilde{u}(x) = u (Rx),$ we can replace $\rho$ by $\rho / R$ and $R$ by $1.$ Since the inequalities are nontrivial 
only for $\frac{\rho}{R}$ small, we assume $\frac{\rho}{R} < \frac{1}{2^{m}}$ for some integer $m.$ Now by Sobolev embedding and $L^{2}$ regularity results give,  
\begin{align*}
 \int_{B^{+}_{\rho / R}} \left\lvert  u \right\rvert^{2} \leq c\left( \frac{\rho}{R}\right)^{n} \sup_{B^{+}_{\rho / R}} \left\lvert  u \right\rvert^{2} 
 \leq c\left( \frac{\rho}{R}\right)^{n} \lVert u \rVert^{2}_{W^{m_{0},2}(B^{+}_{\frac{1}{2^{m_{0}}}})} \leq c \left( \frac{\rho}{R}\right)^{n} \int_{B^{+}_{1}} \left\lvert  u \right\rvert^{2}.
\end{align*}
Same argument for $\nabla u$ and Poincar\'{e} inequality give \eqref{decay of Du}.
Now we define $\widetilde{u} \in W^{1,2}(B_{R}^{+} ; \Lambda^{k})$,
\begin{align*}
 \widetilde{u}^{I} = \left\lbrace \begin{aligned}
                               &u^{I}  &\text{ if } n \notin I,\\
                               & u^{I} - \sum_{i=1}^{n} x_{i}\left( \frac{\partial u^{I}}{\partial x_{i}} \right)_{B^{+}_{1}}   \quad &\text{ if } n \in I,
                              \end{aligned}\right. \qquad \text{ for every } I \in \mathcal{T}^{k}.
\end{align*}
Note that $D^{2}\widetilde{u} = D^{2}u, $ $\widetilde{u}$ satisfies the same PDE as $u$ and $\nu \wedge \widetilde{u} = 0$ on 
$\Gamma_{R}.$ Thus the $L^{2}$ regularity, Poincar\'{e} inequality and Sobolev embedding gives,
\begin{align*}
\int_{B_{\rho / R}} \left\lvert \nabla u -  \left( \nabla u \right)_{\rho / R} \right\rvert^{2} &\leq c \left( \frac{\rho}{R}\right)^{2} \int_{B^{+}_{\rho / R}} \left\lvert  D^{2} u \right\rvert^{2} 
  \leq c \left( \frac{\rho}{R}\right)^{n+2}  \sup_{B^{+}_{1 /2^{m_{1}}}} \left\lvert  D^{2} u \right\rvert^{2} \\
  &= c \left( \frac{\rho}{R}\right)^{n+2}  \sup_{B^{+}_{1 /2^{m_{1}}}} \left\lvert  D^{2} \widetilde{u} \right\rvert^{2} \leq c \left( \frac{\rho}{R}\right)^{n+2}\left\lVert D\widetilde{u}\right\rVert^{2}_{L^{2}(B^{+}_{1})}.
\end{align*}
But we have
$$\left\lVert D\widetilde{u}\right\rVert^{2}_{L^{2}(B^{+}_{1})} = \int_{B^{+}_{1}} \left\lvert \nabla u -  \left( \nabla u \right)_{1} \right\rvert^{2},$$
since $\left( \nabla u_{I} \right)_{1} = 0$ for all $I \in \mathcal{T}^{k}$ with $n \notin I.$  \end{proof}
\subsubsection{$\mathcal{L}^{2, \mu}$ estimates}
Now we proceed to estimates in the Campanato spaces. 
\begin{theorem}[$\mathcal{L}^{2,\mu}$ estimates]\label{l2mu estimates}
 Let $0 \leq \mu \leq n+ 2\gamma $ be a real number. Let the matrix $A:\Lambda^{k+1}\rightarrow \Lambda^{k+1}$ satisfy the Legendre-Hadamard condition and $u \in W_{T, flat}^{1,2}(B_{R}^{+} ; \Lambda^{k})$  satisfy, 
 for all $\psi \in W_{T, flat}^{1,2}(B_{R}^{+} ; \Lambda^{k}),$
 \begin{align}\label{l2muequation}
 \int_{B_{R}^{+}} \langle A(du) ; d\psi \rangle + \int_{B_{R}^{+}} \langle \delta u ; \delta \psi  \rangle &+  \int_{B_{R}^{+}} \langle \mathcal{P} ;   \psi \rangle 
  -\int_{B_{R}^{+}} \langle \mathcal{Q} ; \nabla\psi \rangle = 0.
   \end{align}
If we have $\mathcal{P} \in \mathrm{L}^{2, \mu - 2}(B^{+}_{R}; \Lambda^{k})$ and $\mathcal{Q} \in \mathcal{L}^{2, \mu}(B^{+}_{R}; \mathbb{R}^{n \times \tbinom{n}{k}}) ,$ then   
 $Du \in \mathcal{L}^{2, \mu}\left( B^{+}_{R / 2}; \mathbb{R}^{n \times \tbinom{n}{k}}\right) ,$ 
with the estimate
\begin{equation*}
 \left[ Du \right]_{\mathcal{L}^{2, \mu}\left( B^{+}_{R / 2}\right)} \leq  c \left\lbrace \lVert Du \rVert_{L^{2}(B_{R}^{+})} 
 + \left[ \mathcal{P} \right]_{\mathrm{L}^{2, \mu - 2}(B^{+}_{R})}  
 + \left[ \mathcal{Q} \right]_{\mathcal{L}^{2, \mu}(B^{+}_{R})}  \right\rbrace .
\end{equation*}
Moreover, if $\mathcal{P} \in \mathcal{L}^{2, \mu}(B^{+}_{R}; \Lambda^{k})$ and $\mathcal{Q} \in \mathcal{L}^{2, \mu +2 }(B^{+}_{R}; \mathbb{R}^{n \times \tbinom{n}{k}}),$ 
then we also have $D^{2}u \in \mathcal{L}^{2, \mu}\left( B^{+}_{R / 2}; \mathbb{R}^{n^{2} \times \tbinom{n}{k}}\right) ,$ with the estimate 
\begin{equation*}
 \left[ D^{2} u \right]_{\mathcal{L}^{2, \mu}\left( B^{+}_{R / 2}\right)} \leq  c \left\lbrace \lVert Du \rVert_{L^{2}} 
 + \left[ \mathcal{P} \right]_{\mathcal{L}^{2, \mu}(B^{+}_{R})}  
 + \left[ \mathcal{Q} \right]_{\mathcal{L}^{2, \mu +2 }(B^{+}_{R})}  \right\rbrace .
\end{equation*}
\end{theorem}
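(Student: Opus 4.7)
The proof plan follows the classical Campanato perturbation-decay scheme, adapted to the half-ball geometry and the differential form system.

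\textbf{Step 1: Comparison decomposition.} Fix $x_{0}\in\overline{B^{+}_{R/2}}$ and $0<\rho<\min(R/2,\operatorname{dist}(x_{0},C_{R}))$; I will treat the case of a boundary-centred half-ball $B^{+}_{\rho}(x_{0})$ (centres with $x_{0,n}=0$), the interior case being analogous by the classical interior Campanato theory. Decompose $u=v+w$ on $B^{+}_{\rho}(x_{0})$, where $v\in W^{1,2}$ solves the homogeneous constant-coefficient problem
\begin{equation*}
\int_{B^{+}_{\rho}(x_{0})}\langle A(dv);d\psi\rangle+\int_{B^{+}_{\rho}(x_{0})}\langle\delta v;\delta\psi\rangle=0 \qquad \text{for all } \psi\in W^{1,2}_{T,\text{flat}},
\end{equation*}
with $v-u\in W^{1,2}_{T,\text{flat}}(B^{+}_{\rho}(x_{0}))$. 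Then $w=u-v\in W^{1,2}_{T,\text{flat}}(B^{+}_{\rho}(x_{0}))$ solves the same equation as $u$ with source $(\mathcal{P},\mathcal{Q})$.

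\textbf{Step 2: Energy estimate for $w$.} Testing the equation for $w$ against $w$ itself and using Gaffney's inequality together with the G{\aa}rding inequality (Lemma \ref{garding inequality}, with $\lambda_{1}=0$ on a contractible domain and constant coefficients), I get
\begin{equation*}
\|\nabla w\|_{L^{2}(B^{+}_{\rho}(x_{0}))}^{2}\leq c\,\|\mathcal{P}\|_{L^{2}}\|w\|_{L^{2}}+c\,\|\mathcal{Q}\|_{L^{2}}\|\nabla w\|_{L^{2}}.
\end{equation*}
Since $w$ vanishes on the curved part of $\partial B^{+}_{\rho}(x_{0})$, Poincaré gives $\|w\|_{L^{2}}\leq c\rho\|\nabla w\|_{L^{2}}$, and hence
\begin{equation*}
\|\nabla w\|_{L^{2}(B^{+}_{\rho}(x_{0}))}^{2}\leq c\rho^{2}\|\mathcal{P}\|_{L^{2}(B^{+}_{\rho}(x_{0}))}^{2}+c\|\mathcal{Q}\|_{L^{2}(B^{+}_{\rho}(x_{0}))}^{2}\leq c\rho^{\mu}\bigl([\mathcal{P}]_{\mathrm{L}^{2,\mu-2}}^{2}+[\mathcal{Q}]_{\mathcal{L}^{2,\mu}}^{2}+\|\mathcal{Q}\|_{L^{2}}^{2}\bigr),
\end{equation*}
by the Morrey/Campanato assumptions (writing $\mathcal{Q}=(\mathcal{Q}-(\mathcal{Q})_{\rho})+(\mathcal{Q})_{\rho}$ to separate oscillation and mean).

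\textbf{Step 3: Decay of $v$ and combination.} Applying the decay estimate \eqref{decay of Du - Du average} of Theorem \ref{decay estimates} to $v$, then using $u=v+w$ and the triangle inequality, I obtain for $0<\tau\leq\rho$
\begin{equation*}
\int_{B^{+}_{\tau}(x_{0})}|\nabla u-(\nabla u)_{\tau,x_{0}}|^{2}\leq c\Bigl(\tfrac{\tau}{\rho}\Bigr)^{n+2}\int_{B^{+}_{\rho}(x_{0})}|\nabla u-(\nabla u)_{\rho,x_{0}}|^{2}+c\rho^{\mu}\bigl(\cdots\bigr),
\end{equation*}
where $(\cdots)$ collects the seminorms of $\mathcal{P}$ and $\mathcal{Q}$ plus $\|\nabla u\|_{L^{2}}^{2}$. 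The standard iteration lemma of Campanato type (e.g.\ Lemma 5.13 in Giaquinta–Martinazzi) applied with exponent $\mu\leq n+2$ then yields the first asserted bound, uniformly in $x_{0}$ and $\rho$; taking the supremum gives the $\mathcal{L}^{2,\mu}$-seminorm estimate on $B^{+}_{R/2}$.

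\textbf{Step 4: Second derivative estimate.} For the $D^{2}u$ estimate under the stronger assumptions on $\mathcal{P},\mathcal{Q}$, I proceed by the usual difference-quotient argument in the tangential directions $s=1,\dots,n-1$. The form $\partial_{s}u$ lies in $W^{1,2}_{T,\text{flat}}$ and satisfies the same equation with source terms $\partial_{s}\mathcal{P}$ and $\partial_{s}\mathcal{Q}$, which are now controlled in the spaces required by Step 3 applied to $\partial_{s}u$; this gives the decay of all derivatives of the form $D(\partial_{s}u)$ in $\mathcal{L}^{2,\mu}$ and hence of $D^{2}u$ except for the $\partial_{nn}$ component. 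For $\partial_{nn}u$ I use the pointwise algebraic argument from the proof of Theorem \ref{L2estimatesconstantcoeff}: rewriting the equation via \eqref{gradient system}--\eqref{Apqdefinition} and using the invertibility of $\widetilde{A}^{nn}$ (from Lemma \ref{ellipticitylemma}), one expresses $\partial_{nn}u$ as a linear combination of the other second derivatives and of the source terms $\mathcal{P},\partial_{q}\mathcal{Q}$, transferring their Campanato regularity to $\partial_{nn}u$.

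\textbf{Main obstacle.} The delicate point is the boundary-centred comparison in Step 1: one needs $v$ to satisfy the \emph{same} mixed boundary condition ($\nu\wedge v=0$ on the flat part and $v=u$ on the curved part) so that Theorem \ref{decay estimates} applies, and one needs to verify the decay estimate for $\nabla u - (\nabla u)_{\rho}$ rather than just $\nabla v$ alone, using the fact that only the tangential components vanish on $\Gamma_{R}$ while the normal components only have their average subtracted. Keeping track of which indices are averaged and which are not — cf.\ the splitting in the Caccioppoli inequality \eqref{boundary caccioppoli} — is the key bookkeeping that must be reconciled with the $\mathcal{L}^{2,\mu}$ definition using unadjusted averages $(\cdot)_{\rho,x_{0}}$ of the full gradient.
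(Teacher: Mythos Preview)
Your Step~2 contains a genuine gap. With your choice $w=u-v\in W^{1,2}_{T,\text{flat}}$, you cannot subtract the full mean $(\mathcal{Q})_{\rho}$ in the term $\int\langle\mathcal{Q};\nabla w\rangle$: for indices $I\in\mathcal{T}^{k}$ with $n\in I$ the component $w_{I}$ does not vanish on $\Gamma_{\rho}$, so $\int_{B^{+}_{\rho}}\partial_{n}w_{I}=-\int_{\Gamma_{\rho}}w_{I}\neq 0$ and hence $\int\langle(\mathcal{Q})_{\rho};\nabla w\rangle\neq 0$. Your claimed bound $\lVert\nabla w\rVert_{L^{2}}^{2}\leq c\rho^{\mu}\bigl(\cdots+\lVert\mathcal{Q}\rVert_{L^{2}}^{2}\bigr)$ is therefore false once $\mu>n$: the mean part of $\mathcal{Q}$ contributes a term of order $\rho^{n}$, not $\rho^{\mu}$, and the iteration lemma then only delivers $Du\in\mathcal{L}^{2,n}$ (i.e.\ BMO), not $\mathcal{L}^{2,\mu}$. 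This is exactly the range needed for the Schauder estimates.

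The paper avoids this by a different choice of comparison function: it takes $v$ with \emph{full Dirichlet data} $v=u$ on all of $\partial B^{+}_{R}$, so that $w=u-v\in W^{1,2}_{0}(B^{+}_{R})$. Then $\int_{B^{+}_{R}}\partial_{j}w_{I}=0$ for every $(I,j)$, the constant $(\mathcal{Q})_{R}$ can be subtracted in full, and one gets cleanly
\[
\Bigl|\int_{B^{+}_{R}}\langle\mathcal{Q};\nabla w\rangle\Bigr|=\Bigl|\int_{B^{+}_{R}}\langle\mathcal{Q}-(\mathcal{Q})_{R};\nabla w\rangle\Bigr|\leq \varepsilon\lVert\nabla w\rVert_{L^{2}}^{2}+c\,[\mathcal{Q}]_{\mathcal{L}^{2,\mu}}^{2}R^{\mu}.
\]
The decay estimates for $v$ are still available because $v$ inherits $\nu\wedge v=\nu\wedge u=0$ on $\Gamma_{R}$; alternatively, since the bilinear form rewrites via the map $\widetilde{A}$ of \eqref{gradient system}, which satisfies the classical Legendre--Hadamard condition, the decay for the Dirichlet comparison also follows from the standard Campanato theory for elliptic systems. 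Your ``Main obstacle'' paragraph correctly locates the difficulty, but the resolution is not careful bookkeeping of partial averages --- it is to change the boundary condition in the comparison problem so that $w$ vanishes on the entire boundary.
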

\begin{remark}
The same estimates hold as well if $A$ satisfies the Legendre condition and $u \in W_{N, flat}^{1,2}(B_{R}^{+} ; \Lambda^{k})$  satisfy \eqref{l2muequation} for 
all $\psi \in W_{N, flat}^{1,2}(B_{R}^{+} ; \Lambda^{k}).$
\end{remark}
\begin{proof} We just show the estimate of $$\displaystyle \sup_{0 < \rho < \delta R/2}\frac{1}{\rho^{\mu}}\int_{B^{+}_{\rho}} \lvert Du - \left( Du \right)_{\rho} \rvert^{2},$$
 with $0 < \delta < \frac{1}{2}$ fixed. We write $u = v+ w$ where $v \in W^{1,2}(B_{R}^{+} ; \Lambda^{k})$ solves
\begin{align*}
 \left\lbrace \begin{aligned}
               \delta (A dv) + d\delta v &= 0 \quad &&\text{ in } B^{+}_{R}, \\
   v &=  u \qquad &&\text{ on } \partial B^{+}_{R}.
              \end{aligned} \right. \end{align*}
We have, by standard arguments using theorem \ref{decay estimates} for $v,$  
\begin{align}\label{phi1testimate}
 \int_{B^{+}_{\rho}} \lvert Du - \left( Du \right)_{\rho} \rvert^{2} 
 &\leq c\left( \frac{\rho}{R}\right)^{n+2} \int_{B^{+}_{R}} \lvert Du - \left( Du \right)_{R} \rvert^{2} +  c_{1} \int_{B^{+}_{R}} \lvert \nabla w \rvert^{2}. 
\end{align}
Note that $w$ satisfies
\begin{align*}
 \int_{B_{R}^{+}} \langle A(dw) ; d w \rangle + \int_{B_{R}^{+}} \langle \delta w ; \delta w  \rangle + & \int_{B_{R}^{+}} \langle \mathcal{P} ;   w \rangle 
  -\int_{B_{R}^{+}} \langle \mathcal{Q} ; \nabla w \rangle     = 0.
   \end{align*}
Using H\"{o}lder inequality, Poincar\'{e} inequality and Young's inequality with $\varepsilon,$
\begin{align*}
 \left\lvert \int_{B_{R}^{+}} \langle \mathcal{P} ; w \rangle  \right\rvert &\leq  \left( \int_{B_{R}^{+}}\lvert \mathcal{P} \rvert^{2}\right)^{\frac{1}{2}}
 \left( c R^{2} \int_{B_{R}^{+}}\lvert \nabla w \rvert^{2} \right)^{\frac{1}{2}} \\& \leq \varepsilon\int_{B_{R}^{+}}\lvert \nabla w \rvert^{2} 
 + c [ \mathcal{P} ]^{2}_{\mathrm{L}^{2, \mu - 2 }} R^{\mu}.
\end{align*}
Thus, since $ w = 0$ on all of $\partial B^{+}_{R},$ using Young's inequality  with $\varepsilon$ we deduce, 
\begin{align*}
\left\lvert \int_{B_{R}^{+}} \langle \mathcal{Q} ; \nabla w \rangle  \right\rvert  
= \left\lvert \int_{B_{R}^{+}} \left\langle \mathcal{Q} - \left( \mathcal{Q}\right)_{R} ; \nabla w \right\rangle  \right\rvert
\leq \varepsilon\int_{B^{+}_{R}} \lvert \nabla w \rvert^{2} + c \left[ \mathcal{Q} \right]^{2}_{\mathcal{L}^{2, \mu}(B^{+}_{R})} R^{\mu} .
\end{align*}
Now, by the Gaffney inequality and the G\aa{}rding inequality, we have, 
\begin{align*}
 \int_{B^{+}_{R}} \lvert \nabla w \rvert^{2} \leq c \left\lbrace \int_{B_{R}^{+}} \lvert d w \rvert^{2} + \int_{B_{R}^{+}} \lvert \delta w \rvert^{2} \right\rbrace 
 \leq c \left\lbrace \int_{B_{R}^{+}} \langle A(dw) ; dw \rangle + \int_{B_{R}^{+}} \lvert \delta w \rvert^{2} \right\rbrace.
\end{align*}
Combining this with the above estimates and choosing $\varepsilon$ to be small enough, we obtain an estimate for $\int_{B^{+}_{R}} \lvert \nabla w \rvert^{2}.$ 
Combining with \eqref{phi1testimate} gives
\begin{multline*}
 \int_{B^{+}_{\rho}} \lvert Du - \left( Du \right)_{\rho} \rvert^{2}\leq c\left( \frac{\rho}{R}\right)^{n+2}\int_{B^{+}_{R}} \lvert Du - \left( Du \right)_{R} \rvert^{2} \\ +  c  R^{\mu} \left(  \left[ \mathcal{P} \right]^{2}_{\mathrm{L}^{2, \mu - 2 }} 
  +\left[ \mathcal{Q} \right]^{2}_{\mathcal{L}^{2, \mu}}    \right).
\end{multline*}
By usual arguments utilizing the scaling lemma (see lemma 5.13 in \cite{giaquinta-martinazzi-regularity}),
$$\frac{1}{\rho^{\mu}}\int_{B^{+}_{\rho}} \lvert Du - \left( Du \right)_{\rho} \rvert^{2} \leq c\left( \left\lVert Du \right\rVert_{L^{2}(B_{+}^{R})}^{2} +     \left[ \mathcal{P} \right]^{2}_{\mathrm{L}^{2, \mu - 2 }} 
  +\left[ \mathcal{Q} \right]^{2}_{\mathcal{L}^{2, \mu}}    \right).$$ This yields the desired estimate. 
\end{proof}
\subsubsection{$L^{p}$ estimates}
\begin{theorem}[$L^{p}$ estimates]\label{lpestimatesconstantcoeff}
 Let $ 1 <  p < \infty$ and $1 < q < \infty$ be such that  $q^{*} =\frac{nq}{n-q} \geq p$ if $q < n.$ Let $A:\Lambda^{k+1}\rightarrow \Lambda^{k+1}$ satisfy 
 the Legendre-Hadamard condition. Let $u \in W_{T, flat}^{1,2}(B_{R}^{+} ; \Lambda^{k})$  satisfy, 
 for all $\psi \in W_{T, flat}^{1,2}(B_{R}^{+} ; \Lambda^{k})\cap W^{1,p^{'}}(B_{R}^{+} ; \Lambda^{k}) ,$
 \begin{align}\label{Lpestimateconstant}
 \int_{B_{R}^{+}} \langle A(du) ; d\psi \rangle + \int_{B_{R}^{+}} \langle \delta u ; \delta \psi  \rangle &+  \int_{B_{R}^{+}} \langle \mathcal{P} ;   \psi \rangle 
  -\int_{B_{R}^{+}} \langle \mathcal{Q} ; \nabla\psi \rangle     = 0.
   \end{align}
If $\mathcal{P} \in L^{q}(B^{+}_{R}; \Lambda^{k})$ and  $\mathcal{Q} \in L^{p}(B^{+}_{R}; \mathbb{R}^{n \times \tbinom{n}{k}}), $ 
then $Du \in L^{p}\left( B^{+}_{R}; \mathbb{R}^{n \times \tbinom{n}{k}}\right) $ 
and we have the estimate
\begin{equation*}
 \left\lVert Du \right\rVert_{L^{p}\left( B^{+}_{R}\right)} \leq  c \left\lbrace  
  \left\lVert \mathcal{P} \right\rVert_{L^{q}(B^{+}_{r})}  
 + \left\lVert \mathcal{Q} \right\rVert_{L^{p}(B^{+}_{r})}  \right\rbrace .
\end{equation*}
Moreover, if  $\mathcal{P} \in L^{p}(B^{+}_{R}; \Lambda^{k})$ and $\mathcal{Q} \in W^{1, p }(B^{+}_{r}; \mathbb{R}^{n \times \tbinom{n}{k}}),$ 
then we also have $D^{2}u \in L^{p}\left( B^{+}_{r / 2}; \mathbb{R}^{n^{2} \times \tbinom{n}{k}}\right) $ with the estimate 
\begin{equation*}
 \left\lVert D^{2} u \right\rVert_{L^{p}\left( B^{+}_{R}\right)} \leq  c \left\lbrace \left\lVert \mathcal{P} \right\rVert_{L^{p}(B^{+}_{r})}  
 + \left\lVert \mathcal{Q} \right\rVert_{W^{1, p }(B^{+}_{r})}  \right\rbrace .
\end{equation*}
\end{theorem}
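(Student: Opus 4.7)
The approach is the classical Campanato strategy combined with Stampacchia interpolation and duality, pushing the $\mathcal{L}^{2,\mu}$ estimate of Theorem~\ref{l2mu estimates} past the borderline $\mu = n$ into the $L^{p}$ scale. For $p \ge 2$, I would take $\mu \nearrow n$ in Theorem~\ref{l2mu estimates}: the limiting space $\mathcal{L}^{2,n}$ is BMO, so the estimate delivers a bounded map from the BMO/Morrey versions of $(\mathcal{P},\mathcal{Q})$ into $[Du]_{\mathrm{BMO}(B_{R/2}^{+})}$. Combined with the $L^{2} \to L^{2}$ energy estimate provided by Gaffney and G\aa{}rding, Stampacchia's interpolation theorem between $L^{2}$ and BMO then yields the $L^{p}$ estimate for $Du$ for all $2 \le p < \infty$. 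The hypothesis $q^{\ast} \ge p$ enters precisely because $\mathcal{P}$ is paired with $\psi$ rather than $\nabla\psi$: a test function in $W^{1,p'}$ is, via Sobolev embedding, an admissible $L^{(q)'}$ dual test, so $L^{q}$ data for $\mathcal{P}$ contributes with the same scaling as $L^{p}$ data for $\mathcal{Q}$.

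For $1 < p < 2$, I would argue by duality. The bilinear form $(u,\psi) \mapsto \int \langle A\,du;d\psi\rangle + \int \langle \delta u;\delta\psi\rangle$ is symmetric up to replacing $A$ by $A^{T}$, and the test function class $W_{T,\mathrm{flat}}^{1,2}$ is symmetric in $u$ and $\psi$, so the formal adjoint problem has the same structure with the same boundary conditions. Given arbitrary $F \in L^{p'}$, I would solve the adjoint problem in $W_{T,\mathrm{flat}}^{1,2}$, apply the $p \ge 2$ case (with $p'$) to obtain $\|Dv\|_{L^{p'}} \le c\|F\|_{L^{p'}}$, and then test the original identity against this $v$; rearranging produces the desired bound $\|Du\|_{L^{p}} \le c(\|\mathcal{P}\|_{L^{q}} + \|\mathcal{Q}\|_{L^{p}})$.

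For the second-derivative bound, I would difference in tangential directions $s = 1,\ldots,n-1$. Since $\tau_{h,s}$ commutes with $d$, $\delta$, and the flat tangential boundary condition on $\Gamma_{R}$, the form $\tau_{h,s}u$ satisfies \eqref{Lpestimateconstant} with source data $(\tau_{h,s}\mathcal{P}, \tau_{h,s}\mathcal{Q})$; under the stronger hypotheses $\mathcal{P} \in L^{p}$ and $\mathcal{Q} \in W^{1,p}$ these are uniformly $L^{p}$-bounded in $h$. Applying the first part of the theorem to $\tau_{h,s}u$ and sending $h \to 0$ controls every second derivative involving at least one tangential direction. The remaining pure normal derivative $D_{nn}u$ is then recovered algebraically from the PDE via the invertibility of $\widetilde{A}^{nn}$ established in the proof of Theorem~\ref{L2estimatesconstantcoeff}, which allows one to solve the equation for $D_{nn}u$ in terms of quantities already known to lie in $L^{p}$.

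The main obstacle will be the duality step: checking precisely that the adjoint problem has genuinely the same boundary conditions on the flat and curved parts of $\partial B_{R}^{+}$, that the density $W_{T,\mathrm{flat}}^{1,2} \cap W^{1,p'}$ is sufficient to test against $L^{p}$ gradients, and that no boundary contributions are dropped during the rearrangement. A secondary delicate point is handling the endpoint $\mu = n$ in Theorem~\ref{l2mu estimates}: the BMO estimate is not simply the naive $\mu \to n$ limit and will require a dyadic covering argument with a logarithmic slack, or equivalently working directly in $\mathcal{L}^{2,\mu}$ for $\mu$ strictly below $n$ and using a Morrey-type embedding into $L^{p}$ quantitatively.
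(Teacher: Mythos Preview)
Your overall architecture matches the paper's: Theorem~\ref{l2mu estimates} at $\mu = n$ gives the $L^{\infty}\to\mathrm{BMO}$ endpoint, energy gives $L^{2}\to L^{2}$, Stampacchia interpolation yields $L^{p}$ for $p\ge 2$, and duality handles $1<p<2$. Your second-derivative argument via tangential difference quotients and the invertibility of $\widetilde{A}^{nn}$ is also the standard route, consistent with the paper's treatment in Theorem~\ref{L2estimatesconstantcoeff}.

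The one point the paper handles differently, and which it explicitly flags as the only nonstandard step, is the treatment of $\mathcal{P}$. You gesture at a scaling argument (``a test function in $W^{1,p'}$ is an admissible $L^{(q)'}$ dual test''), but to run Stampacchia cleanly you want a \emph{single} linear map $\mathcal{Q}\mapsto\nabla u$ to interpolate. The paper achieves this by absorbing $\mathcal{P}$ into $\mathcal{Q}$: for each multi-index $I$ it solves $-\Delta\theta_{I}=\mathcal{P}_{I}$ on $B_{R}^{+}$ with a \emph{Dirichlet} condition if $n\notin I$ and a \emph{Neumann} condition (vanishing on $\Gamma_{R}$) if $n\in I$. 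This choice is tailored to $W_{T,\mathrm{flat}}^{1,2}$: when you integrate by parts $\int\mathcal{P}_{I}\psi_{I}=-\int\Delta\theta_{I}\psi_{I}$, the boundary term on $\Gamma_{R}$ vanishes either because $\psi_{I}=0$ there ($n\notin I$, since $e_{n}\wedge\psi=0$) or because $\partial_{\nu}\theta_{I}=0$ there ($n\in I$), while on $C_{R}$ it vanishes because $\psi=0$. Then $\int\langle\mathcal{P};\psi\rangle=\int\langle\nabla\theta;\nabla\psi\rangle$ with $\nabla\theta\in W^{1,q}\hookrightarrow L^{p}$ by the hypothesis $q^{*}\ge p$, and one may set $\mathcal{P}=0$. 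This is precisely the place where the tangential boundary structure enters the $L^{p}$ theory, and your sketch does not supply it.

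Finally, your worry about the endpoint $\mu=n$ is unnecessary: Theorem~\ref{l2mu estimates} is stated for $0\le\mu\le n+2\gamma$, so $\mu=n$ is directly available and $\mathcal{L}^{2,n}=\mathrm{BMO}$ gives the $L^{\infty}\to\mathrm{BMO}$ bound without any dyadic refinement.
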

\begin{remark}
The same estimates hold as well if $A$ satisfies the Legendre condition and $u \in W_{N, flat}^{1,2}(B_{R}^{+} ; \Lambda^{k})$  satisfy \eqref{Lpestimateconstant} for 
all $\psi \in W_{N, flat}^{1,2}(B_{R}^{+} ; \Lambda^{k}).$
\end{remark}
\begin{proof}
The only difference from standard methods lies in the argument to show that $\mathcal{P}$ can be absorbed in the $\mathcal{Q}$ term and we can assume $\mathcal{P} = 0.$

\noindent For every $I \in \mathcal{T}^{k}$ such that $n \in I,$ we solve the following Neumann boundary value problem for 
the scalar Laplacian,
\begin{align*}
 \left\lbrace \begin{aligned}
               -\Delta \theta_{I} &= \mathcal{P}_{I} \qquad \text{ in } B^{+}_{R}, \\
  \frac{\partial \theta_{I}}{\partial \nu} &= g_{I} \qquad \text{ in } \partial B^{+}_{R}, 
              \end{aligned}\right.\end{align*}
where the functions $g_{I}$s are such that $$ \int_{\partial B^{+}_{R}} g = \int_{B^{+}_{R}} \mathcal{P}_{I} \qquad \text{ and } \qquad g_{I} = 0 \text{ on } \Gamma_{R}^{+}.$$
For every $I \in \mathcal{T}^{k}$ such that $n \notin I,$ we solve the following Dirichlet boundary value problem for 
the scalar Laplacian,
\begin{align*}
 \left\lbrace \begin{aligned}
               -\Delta \theta_{I} &= \mathcal{P}_{I} \qquad \text{ in } B^{+}_{R}, \\
  \theta_{I} &= 0 \qquad \text{ in } \partial B^{+}_{R}. 
              \end{aligned}\right.\end{align*}
Since $1 < q < \infty,$ by regularity results for the scalar Laplacian, we deduce that $\theta_{I} \in W^{2,q}\left( B^{+}_{R} \right)$ along with the estimate 
$$ \left\lVert \theta_{I}\right\rVert_{W^{2,q }} \leq \left\lVert \mathcal{P}_{I}\right\rVert_{L^{q }}. $$

Now, using integrating by parts, we can rewrite the term $\int_{B_{R}^{+}} \left\langle \mathcal{P}; \psi\right\rangle $ as, 
\begin{align*}
 \int_{B_{R}^{+}} \left\langle \mathcal{P}; \psi\right\rangle &= \int_{B_{R}^{+}} \sum_{I \in \mathcal{T}^{k}} \mathcal{P}_{I}\psi_{I} =
 -\int_{B_{R}^{+}} \sum_{I \in \mathcal{T}^{k}} \Delta \theta_{I}\psi_{I} \\&= \int_{B_{R}^{+}} \sum_{I \in \mathcal{T}^{k}}  \left\langle \nabla \theta_{I} ; \nabla\psi_{I} \right\rangle 
 - \int_{\partial B_{R}^{+}} \sum_{I \in \mathcal{T}^{k}}  \frac{\partial \theta_{I}}{\partial \nu}\psi_{I} \\
 &=\int_{B_{R}^{+}} \sum_{I \in \mathcal{T}^{k}}  \left\langle \nabla \theta_{I} ; \nabla\psi_{I} \right\rangle 
 - \int_{\Gamma_{R}^{+}} \sum_{\substack{n \in I \\I \in \mathcal{T}^{k}}}  \frac{\partial \theta_{I}}{\partial \nu}\psi_{I} \\
 &=\int_{B_{R}^{+}} \sum_{I \in \mathcal{T}^{k}}  \left\langle \nabla \theta_{I} ; \nabla\psi_{I} \right\rangle.
\end{align*}
Thus this term can be absorbed in the $\mathcal{Q}$ term, since $\nabla \theta_{I} \in L^{p}$ as a consequence of Sobolev embedding and 
the fact that $\theta_{I} \in W^{2, q}$ with $q^{*} \geq p.$\smallskip

\noindent The rest is standard. By theorem \ref{l2mu estimates}, the linear map 
$\mathcal{Q} \mapsto T(\mathcal{Q}) = \nabla u,$ can be extended as a bounded linear 
map from $L^{2}( B^{+}_{R }; \mathbb{R}^{n \times \tbinom{n}{k}})$ to $L^{2}( B^{+}_{R }; \mathbb{R}^{n \times \tbinom{n}{k}})$ and from 
$L^{\infty}( B^{+}_{R }; \mathbb{R}^{n \times \tbinom{n}{k}})$ to $BMO( B^{+}_{R }; \mathbb{R}^{n \times \tbinom{n}{k}}) .$ Thus Stampacchia's interpolation theorem implies that the map extends as a bounded linear operator from 
$L^{p}( B^{+}_{R }; \mathbb{R}^{n \times \tbinom{n}{k}})$ to $L^{p}( B^{+}_{R }; \mathbb{R}^{n \times \tbinom{n}{k}}),$ for all $2 \leq p < \infty. $ 
The case $1 < p < 2$ follows by usual duality arguments.  \end{proof}
\subsection{Schauder and $L^{p}$ estimates}\label{schauderandlp}
We are now ready to derive the global estimates. We use lemma \ref{flattening and freezing coefficients} to flatten the boundary, whose proof has been relegated to the appendix. 
By usual patching arguments, everything boils down to proving the estimates for $u \in W_{T}^{1,2}(B_{R}^{+};\Lambda^{k})$ (respectively $W_{N}^{1,2}(B_{R}^{+};\Lambda^{k})$) 
satisfying 
\begin{align*}
 \int_{B_{R}^{+}} \langle \bar{A}(du) ; d\psi \rangle + \int_{B_{R}^{+}} \langle \delta u ; \delta \psi  \rangle +  \int_{B_{R}^{+}} \langle \mathcal{P} ;   \psi \rangle 
  -\int_{B_{R}^{+}} &\langle \mathcal{Q} ; \nabla\psi \rangle \notag \\&+ \int_{B_{R}^{+}} \langle \mathrm{S}\nabla u ; \nabla\psi \rangle= 0,
   \end{align*}
for all $\psi \in W_{T}^{1,2}(B_{R}^{+};\Lambda^{k}),$ (respectively $W_{N}^{1,2}(B_{R}^{+};\Lambda^{k})$),
where $\bar{A}$ satisfies the Legendre-Hadamard condition (respectively Legendre condition), 
$\mathcal{P} = \widetilde{f} + \mathrm{P}u + \mathrm{R} \nabla u$  and $\mathcal{Q} = \widetilde{F}  -  \mathrm{Q}u ,$ with $\widetilde{f}, \widetilde{F}, 
\mathrm{P}, \mathrm{Q}, \mathrm{R}, \mathrm{S}$ are as in lemma \ref{flattening and freezing coefficients}, for $R>0$ suitably small, 

\begin{theorem}[$C^{r+2,\gamma}$ regularity]\label{boundary Cralpha regularity linear}
 Let $1\leq k\leq n-1$, $r \geq 0$ be integers and  let $0 < \gamma < 1 $ be a real number. Let $\Omega
\subset\mathbb{R}^{n}$ be an open, bounded $C^{r+2,\gamma}$ set. Let $A  \in C^{r,\gamma}(\overline{\Omega} ; L(\Lambda^{k+1},\Lambda^{k+1}))$ 
satisfy the Legendre-Hadamard condition. 
Also  let $f \in C^{r, \gamma }(\overline{\Omega}, \Lambda^{k}) $, $ F \in C^{r, \gamma }(\overline{\Omega}, \Lambda^{k+1})$ and $\lambda \in \mathbb{R}.$ 
 Let  $\omega \in W_{T}^{1,2}(\Omega, \Lambda^{k})$ be a weak solution of the following,
\begin{equation}\label{schauder estimate equation}
 \int_{\Omega}  \langle A (x)d\omega , d\phi \rangle  + \int_{\Omega}\langle \delta \omega , \delta \phi \rangle 
  + \lambda  \int_{\Omega}\langle \omega , \phi \rangle 
  +\int_{\Omega} \langle  f, \phi \rangle  - \int_{\Omega} \langle  F, d\phi \rangle = 0,   
\end{equation}
for all $ \phi \in W_{T}^{1,2}(\Omega, \Lambda^{k})$.
Then $\omega \in C^{r+1,\gamma}(\overline{\Omega}, \Lambda^{k})$   and satisfies the estimate, 
\begin{equation*}
  \lVert  \omega \rVert_{C^{r+1,\gamma}(\overline{\Omega};\Lambda^{k})} \leq   c \left\lbrace \lVert \omega \rVert_{C^{0,\gamma}(\overline{\Omega};\Lambda^{k})}
   + \lVert f \rVert_{C^{r,\gamma}(\overline{\Omega};\Lambda^{k})}  + \lVert F \rVert_{C^{r,\gamma}(\overline{\Omega};\Lambda^{k+1})}\right\rbrace,
\end{equation*}
where the constant $c>0$ depends only on $A, \lambda, r$ and $ \Omega. $ Moreover, if $A  \in C^{r+1,\gamma}(\overline{\Omega} ; L(\Lambda^{k+1},\Lambda^{k+1}))$ and $ F \in C^{r+1, \gamma }(\overline{\Omega}, \Lambda^{k+1}),$ then 
$\omega \in C^{r+2,\gamma}(\overline{\Omega}, \Lambda^{k})$   and satisfies the estimate, 
\begin{equation*}
  \lVert  \omega \rVert_{C^{r+2,\gamma}(\overline{\Omega};\Lambda^{k})} \leq   c \left\lbrace \lVert \omega \rVert_{C^{0,\gamma}(\overline{\Omega};\Lambda^{k})}
   + \lVert f \rVert_{C^{r,\gamma}(\overline{\Omega};\Lambda^{k})}  + \lVert F \rVert_{C^{r+1,\gamma}(\overline{\Omega};\Lambda^{k+1})}\right\rbrace,
\end{equation*}
where the constant $c>0$ depends, once again, only on $A, \lambda, r$ and $ \Omega. $
\end{theorem}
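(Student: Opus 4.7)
My plan is to deduce this theorem from the half-ball Campanato estimates of Theorem \ref{l2mu estimates} via a coefficient-freezing argument combined with an induction on $r$. Covering $\overline{\Omega}$ by finitely many interior balls (where classical results apply) and boundary half-balls, and invoking the flattening lemma \ref{flattening and freezing coefficients}, the problem reduces to establishing the desired regularity for $u \in W_T^{1,2}(B_R^+;\Lambda^k)$ satisfying
\begin{align*}
\int_{B_R^+}\langle \bar A\,du;d\psi\rangle + \int_{B_R^+}\langle \delta u;\delta\psi\rangle + \int_{B_R^+}\langle \mathcal{P};\psi\rangle - \int_{B_R^+}\langle \mathcal{Q};\nabla\psi\rangle + \int_{B_R^+}\langle \mathrm{S}\nabla u;\nabla\psi\rangle = 0
\end{align*}
for every $\psi \in W_T^{1,2}(B_R^+;\Lambda^k)$, where $\bar A$ is a constant Legendre--Hadamard matrix, $\mathcal{P} = \widetilde{f} + \mathrm{P}u + \mathrm{R}\nabla u$ and $\mathcal{Q} = \widetilde{F} - \mathrm{Q}u$ encode the data $f$, $F$ together with lower-order contributions, and $\mathrm{S}$ has $L^\infty$ norm of order $R^{\gamma}$ by the H\"older continuity of $A$. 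The zero-order term $\lambda\omega$ is harmlessly incorporated into $\mathrm{P}u$.

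The first step is to absorb the perturbation $\int\langle \mathrm{S}\nabla u;\nabla\psi\rangle$ by treating it as an extra $\mathcal{Q}$-type term; since the estimate of Theorem \ref{l2mu estimates} is linear in $[\mathcal{Q}]_{\mathcal{L}^{2,\mu}}$, choosing $R$ sufficiently small makes the coefficient in front of $[Du]_{\mathcal{L}^{2,\mu}}$ strictly less than one, and the perturbation is absorbed into the left-hand side. A Campanato bootstrap then lifts the exponent $\mu$ in small increments: starting from the trivial control $Du \in L^2 = \mathcal{L}^{2,0}$, Sobolev--Campanato embedding upgrades $u$ to $\mathrm{L}^{2,\mu-2}$ control at each stage, feeding back through Theorem \ref{l2mu estimates} to push $Du$ into $\mathcal{L}^{2,\mu'}$ for a strictly larger $\mu'$; iteration drives $\mu$ all the way up to $n+2\gamma$, at which value Campanato's identification with H\"older spaces yields $u\in C^{1,\gamma}$ up to the flat boundary. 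Together with interior Schauder estimates, a standard patching argument settles the $r=0$ case of the first assertion.

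For $r \ge 1$ one proceeds by induction: tangential difference quotients (and hence tangential derivatives) $\partial_s u$, $s=1,\ldots,n-1$, satisfy a system of the same structure but with data one H\"older class lower, so the inductive hypothesis produces the tangential regularity, while the purely normal derivatives are recovered algebraically from the equation using the invertibility of the matrix $\widetilde A^{nn}$ established inside the proof of Theorem \ref{L2estimatesconstantcoeff}. The second assertion, giving $C^{r+2,\gamma}$, is proved analogously but using the second half of Theorem \ref{l2mu estimates} to place $D^2u$ in the appropriate Campanato class, which is precisely why one extra derivative of $A$ and $F$ is needed. The main obstacle I anticipate is the bookkeeping in the Campanato bootstrap: one must verify at each stage that, after absorbing the lower-order terms $\mathrm{P}u$, $\mathrm{R}\nabla u$ and $-\mathrm{Q}u$, the residual $\mathcal{P}$ and $\mathcal{Q}$ still lie in the Morrey and Campanato spaces demanded by Theorem \ref{l2mu estimates}, and one must track carefully how much regularity of $A$, $f$ and $F$ is consumed as $\mu$ approaches the critical value $n+2\gamma$.
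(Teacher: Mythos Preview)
Your proposal is correct and follows essentially the same route as the paper: flatten via Lemma~\ref{flattening and freezing coefficients}, reduce to the constant-coefficient half-ball model, invoke the Campanato estimates of Theorem~\ref{l2mu estimates}, absorb the perturbation $\mathrm{S}\nabla u$ by smallness of $R$, bootstrap $\mu$ up to $n+2\gamma$, and handle higher $r$ by tangential differentiation plus the $\widetilde A^{nn}$-inversion for the normal derivative. The paper's own proof is extremely terse and simply cites ``standard techniques'' for what you have spelled out.

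The one minor procedural difference worth flagging: you propose to absorb $\mathrm{S}\nabla u$ by feeding it into Theorem~\ref{l2mu estimates} as an additional $\mathcal{Q}$-term and then absorbing $[\mathrm{S}\nabla u]_{\mathcal{L}^{2,\mu}}$ on the left via smallness of $\lVert\mathrm{S}\rVert_{L^\infty}$. The paper instead reopens the comparison $u=v+w$ at the decay-estimate level and bounds $\int\langle \mathrm{S}\nabla u;\nabla w\rangle$ directly, producing an extra term $c\,\mathcal{C}^2(R)\int_{B_R^+}|\nabla u|^2$ which is then handled by the standard scaling lemma (the version allowing a small additive multiple of $\phi(R)$). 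Your route is perfectly valid, but it requires a little care: to write $[\mathrm{S}\nabla u]_{\mathcal{L}^{2,\mu}}$ one must already know $\nabla u\in\mathcal{L}^{2,\mu}$, so the absorption must be done either via an a~priori/approximation argument or coupled tightly with the bootstrap you describe. The paper's decay-level absorption sidesteps this qualitative issue. Either way the argument closes, and in both cases the lower-order terms $\mathrm{P}u$, $\mathrm{R}\nabla u$, $\mathrm{Q}u$ and $\lambda\omega$ are disposed of by interpolation at the end, exactly as the paper indicates.
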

\begin{remark}
 If $A$ satisfies the Legendre condition, same conclusions hold as well for any weak solution $\omega \in W_{N}^{1,2}(\Omega, \Lambda^{k})$ satisfying 
 \eqref{schauder estimate equation} for all $ \phi \in W_{N}^{1,2}(\Omega, \Lambda^{k})$.
\end{remark}
\begin{proof}
 Writing $u = v+ w ,$ where $v \in W^{1,2}(B_{R}^{+} ; \Lambda^{k})$ is the solution of 
the homogeneous equation for $\bar{A}$ coinciding with $u$ on the boundary and noting that  
\begin{align*}
  \left\lvert \int_{B_{R}^{+}} \langle \mathrm{S} \nabla u ; \nabla w \rangle  \right\rvert  
\leq \varepsilon\int_{B^{+}_{R}} \lvert \nabla w \rvert^{2} + c \left( \mathcal{C}_{A}^{2}(R) + \mathcal{C}_{\Phi}^{2}(R) \right)\int_{B^{+}_{R}} \lvert \nabla u \rvert^{2},
\end{align*}
the Schauder estimates follow from theorem \ref{l2mu estimates} by standard techniques (see \cite{giaquinta-martinazzi-regularity}) after absorbing the lower order 
terms by interpolation.  
\end{proof}

\begin{theorem}[$W^{r+2,p}$ regularity]\label{boundary Wrp regularity linear}
 Let $1\leq k\leq n-1$, $r \geq 0$ be integers and  let $1 < p < \infty $ and $1 < q < \infty$ be real numbers such that  $q^{*} =\frac{nq}{n-q} \geq p.$ Let $\Omega
\subset\mathbb{R}^{n}$ be an open, bounded $C^{r+2}$ set. Let $A  \in C^{r}(\overline{\Omega} ; L(\Lambda^{k+1},\Lambda^{k+1}))$ 
satisfy  the Legendre-Hadamard condition. 
Also  let $f \in W^{r, q }(\Omega, \Lambda^{k}) $, $ F \in W^{r, p }(\Omega, \Lambda^{k+1})$ and $\lambda \in \mathbb{R}.$ 
 Let  $\omega \in W_{T}^{1,2}(\Omega, \Lambda^{k})$ be a weak solution of the following,
\begin{equation}\label{Lp estimate equation}
 \int_{\Omega}  \langle A (x)d\omega , d\phi \rangle  + \int_{\Omega}\langle \delta \omega , \delta \phi \rangle 
  + \lambda  \int_{\Omega}\langle \omega , \phi \rangle 
  +\int_{\Omega} \langle  f, \phi \rangle  - \int_{\Omega} \langle  F, d\phi \rangle = 0,   
\end{equation}
for all $ \phi \in W_{T}^{1,2}(\Omega, \Lambda^{k}) \cap W^{1,p^{'}}(\Omega, \Lambda^{k})$.
Then $\omega \in W^{r+1,p}(\Omega, \Lambda^{k}),$   and satisfies the estimate, 
\begin{equation*}
  \lVert  \omega \rVert_{W^{r+1,p}(\Omega;\Lambda^{k})} \leq   c \left\lbrace \lVert \omega \rVert_{L^{p}(\Omega;\Lambda^{k})}
   + \lVert f \rVert_{W^{r,q}(\Omega;\Lambda^{k})}  + \lVert F \rVert_{W^{r,p}(\Omega;\Lambda^{k+1})}\right\rbrace,
\end{equation*}
where the constant $c>0$ depends only on $A, \lambda, r$ and $ \Omega. $ Moreover, if $A  \in C^{r+1}(\overline{\Omega} ; L(\Lambda^{k+1},\Lambda^{k+1})),$ 
$f \in W^{r, p }(\Omega, \Lambda^{k}) $ and 
$ F \in W^{r+1, p }(\Omega, \Lambda^{k+1}),$ then 
$\omega \in W^{r+2,p}(\Omega, \Lambda^{k}),$   and satisfies the estimate, 
\begin{equation*}
  \lVert  \omega \rVert_{W^{r+2,p}(\Omega;\Lambda^{k})} \leq   c \left\lbrace \lVert \omega \rVert_{L^{p}(\Omega;\Lambda^{k})}
   + \lVert f \rVert_{W^{r,p}(\Omega;\Lambda^{k})}  + \lVert F \rVert_{W^{r+1,p}(\Omega;\Lambda^{k+1})}\right\rbrace,
\end{equation*}
where the constant $c>0$ depends, once again, only on $A, \lambda, r$ and $ \Omega. $
\end{theorem}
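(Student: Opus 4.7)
The plan is to mirror the proof of Theorem \ref{boundary Cralpha regularity linear}, replacing the Campanato-space $\mathcal L^{2,\mu}$ estimates by their $L^p$ counterparts from Theorem \ref{lpestimatesconstantcoeff}. First I would use a partition of unity to localize: away from $\partial\Omega$ the interior version of Theorem \ref{lpestimatesconstantcoeff} on full balls applies, while near a boundary point I flatten the boundary and freeze coefficients via lemma \ref{flattening and freezing coefficients}. This reduces matters to proving $L^p$ bounds on $Du$ (and then $D^2 u$) for $u \in W_T^{1,2}(B_R^+;\Lambda^k)$ satisfying the perturbed constant-coefficient system displayed just before Theorem \ref{boundary Cralpha regularity linear}, with inhomogeneous terms $\mathcal{P} = \widetilde{f} + \mathrm{P} u + \mathrm{R}\nabla u$ and $\mathcal{Q} = \widetilde{F} - \mathrm{Q} u$ together with the perturbation $\mathrm{S}\nabla u$.

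Next I would fold $\mathrm{S}\nabla u$ into $\mathcal{Q}$ and apply Theorem \ref{lpestimatesconstantcoeff} to the frozen, constant-coefficient operator to obtain
\begin{equation*}
\lVert Du\rVert_{L^p(B_R^+)} \leq c\bigl(\lVert \mathcal{P}\rVert_{L^q(B_R^+)} + \lVert \mathcal{Q}\rVert_{L^p(B_R^+)} + \lVert \mathrm{S}\rVert_{L^\infty}\lVert \nabla u\rVert_{L^p(B_R^+)}\bigr).
\end{equation*}
Because $\mathrm{S}$ can be made arbitrarily small in $L^\infty$ by shrinking $R$ (using continuity of $A$ and the fact that the flattening diffeomorphism is close to the identity at small scales), the last term is absorbed on the left. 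The lower-order contributions $\mathrm{P} u$, $\mathrm{R}\nabla u$ inside $\mathcal{P}$ and $\mathrm{Q} u$ inside $\mathcal{Q}$ are estimated using the Sobolev embedding $W^{1,q}\hookrightarrow L^{p}$, which holds precisely because $q^{\ast}\geq p$, together with interpolation, leaving an $\lVert u\rVert_{L^p}$ term on the right. Patching the local estimates via the partition of unity yields the base case $r=0$.

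Higher regularity $\omega\in W^{r+1,p}$, and under the stronger hypotheses on $A$ and $F$ the bound $\omega\in W^{r+2,p}$, is proved by induction on $r$. The tangential derivatives of $u$ are controlled by differentiating the equation via a difference-quotient argument in directions parallel to $\Gamma_R$; the shifted problem has the same structure, with $\mathcal{P}$ and $\mathcal{Q}$ replaced by their tangential derivatives, so the previous step applies. The top-order normal derivative $\partial_{x_n x_n}u$ is then recovered by solving the equation algebraically, exactly as in the proof of Theorem \ref{L2estimatesconstantcoeff}, using the invertibility of $\widetilde{A}^{nn}$ established there.

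I expect the main obstacle to be the absorption step: to apply the Korn-trick absorption uniformly along a finite cover of $\partial\Omega$, one needs quantitative control of $\lVert \mathrm{S}\rVert_{L^\infty}$ and of the constants appearing in Theorem \ref{lpestimatesconstantcoeff} as $R \to 0$, so that a single scale works on every chart. Once this is in place, the rest is the $L^p$-analogue of the Schauder argument sketched for Theorem \ref{boundary Cralpha regularity linear}, with the Stampacchia interpolation built into Theorem \ref{lpestimatesconstantcoeff} already handling the passage between $p=2$ and general $p\in(1,\infty)$.
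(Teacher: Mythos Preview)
Your outline captures the localization, flattening, and perturbation structure correctly, but there is a genuine gap in the ``absorption'' step. When you write
\[
\lVert Du\rVert_{L^{p}(B_{R}^{+})}\leq c\bigl(\lVert\mathcal{P}\rVert_{L^{q}}+\lVert\mathcal{Q}\rVert_{L^{p}}+\lVert\mathrm{S}\rVert_{L^{\infty}}\lVert\nabla u\rVert_{L^{p}}\bigr)
\]
and then absorb the last term on the left, you are tacitly assuming $\nabla u\in L^{p}(B_{R}^{+})$ to begin with; otherwise both sides may be $+\infty$ and the inequality carries no information. But the hypothesis only gives $\omega\in W^{1,2}_{T}$, so a priori $\nabla u\in L^{2}$ and nothing more. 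The same circularity affects the term $\mathrm{R}\nabla u$ inside $\mathcal{P}$. Your argument therefore yields an \emph{a priori} estimate for solutions already known to lie in $W^{1,p}$, but it does not establish the qualitative regularity statement, which is the content of the theorem.

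The paper closes this gap by a contraction/fixed-point bootstrap rather than a direct absorption. Assuming $u\in W^{1,m}$ for some $2\leq m\leq p$, one defines, for each $V\in W^{1,s}_{T,\mathrm{flat}}$ with $s=\min(p,m^{\ast})$, the solution $v\in W^{1,s}_{T,\mathrm{flat}}$ of the frozen problem with $\mathrm{S}\nabla V$ (not $\mathrm{S}\nabla u$) on the right. Theorem \ref{lpestimatesconstantcoeff} applies because $\mathrm{S}\nabla V\in L^{s}$ is now a datum, and the resulting map $V\mapsto v$ is a contraction on $W^{1,s}_{T,\mathrm{flat}}$ for $R$ small, since $\lVert v_{1}-v_{2}\rVert_{W^{1,s}}\leq c[\mathcal{C}_{A}(R)+\mathcal{C}_{\Phi}(R)]\lVert\nabla V_{1}-\nabla V_{2}\rVert_{L^{s}}$. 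Its unique fixed point coincides with $u$, which therefore belongs to $W^{1,s}$. Iterating $m\mapsto\min(p,m^{\ast})$ finitely many times reaches $p$. Once $\nabla u\in L^{p}$ is secured in this way, your remaining steps (tangential differentiation, recovery of $\partial_{x_{n}x_{n}}u$ via the invertibility of $\widetilde{A}^{nn}$, induction on $r$, and duality for $1<p<2$) are indeed the right ones and match the paper.
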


\begin{remark}
 If $A$ satisfies the Legendre condition, same conclusions hold for any weak solution $\omega \in W_{N}^{1,2}(\Omega, \Lambda^{k})$ satisfying 
 \eqref{Lp estimate equation} for all $ \phi \in W_{N}^{1,2}(\Omega, \Lambda^{k}) \cap W^{1,p^{'}}(\Omega, \Lambda^{k})$.
\end{remark}

\begin{proof}
The result follows from Theorem \ref{lpestimatesconstantcoeff} by standard arguments.  
The crucial step is to show that if $u \in W^{1,m},$ for some $2 \leq m \leq p,$ then for every $V \in W^{1,s}_{T,flat},$ there is a unique 
solution $v \in W^{1,s}_{T,flat} $ satisfying, for every 
    $\psi \in W^{1,2}_{T,flat}(B_{R}^{+};\Lambda^{k})\cap W^{1,p^{'}}(B_{R}^{+};\Lambda^{k}),$ 
   \begin{align*}
 \int_{B_{R}^{+}} \langle \bar{A}(dv) ; d\psi \rangle + \int_{B_{R}^{+}} \langle \delta v ; \delta \psi  \rangle 
 &+  \int_{B_{R}^{+}} \langle \widetilde{f} + \mathrm{P}u + \mathrm{R} \nabla u ;   \psi \rangle 
   \\&-\int_{B_{R}^{+}} \langle \widetilde{F}  -  \mathrm{Q}u ; \nabla\psi \rangle + \int_{B_{R}^{+}} \langle \mathrm{S}\nabla V ; \nabla\psi \rangle= 0,
   \end{align*}
   where $s = \min \left( p , m^{*} \right).$ Then we show that the map 
   $T:W^{1,s}_{T,flat} \rightarrow W^{1,s}_{T,flat} $ that maps $V$ to $v$ has a fixed point if $R$ is chosen small enough. But by Poincar\'{e} inequality and 
   theorem \ref{lpestimatesconstantcoeff}, 
   $$\left\lVert v_{1} - v_{2} \right\rVert_{W^{1,s}} \leq c \left[ \mathcal{C}_{A}(R) + \mathcal{C}_{\Phi}(R) \right] \left\lVert \nabla V_{1} - \nabla V_{2} \right\rVert_{L^{s}}. 
$$ This means for $R$ small enough, $T$ is a contraction and thus has an unique fixed point in $W^{1,s}_{T,flat}.$ The estimate for $p \geq 2$ follows since lower order terms can again be absorbed by interpolation. 
The case $1 < p<2$ follows by duality arguments.
\end{proof}

\section{Applications}
A number of results concerning a wide variety of problems follow from the regularity estimates derived in the last section.\smallskip 

\noindent Before we begin, a few general remarks, valid for 
all the theorems in the Sections \ref{hodge A}. \ref{maxwell A} and \ref{stokes} are in order. 
We state here only the Hessian estimates, but their gradient estimate versions hold as well. 
Also, each of them have corresponding dual versions which can be easily obtained by Hodge duality. Also, the norm of the solution on the right hand side of the 
estimates can be dropped, since uniqueness of the solution is part of the conclusion. But we present the estimates in the given form since in general they can not be improved 
in situations when the solution is only unique up to harmonic fields, as discussed in the remarks following the theorems.    
\subsection{Second order Hodge type systems: A special case}\label{hodge A}
First such problem is the Hodge-type boundary value problem for a second order elliptic system, the prototype being the Poisson problem for the Hodge Laplacian with absolute and 
relative boundary conditions. 
\begin{theorem}\label{second order hodge system tangential}
Let $1 \leq k \leq n-1,$ $r \geq 0$ be integers and $ 0 <  \gamma < 1$ and $1 < p < \infty$ be real numbers. 
Let $\Omega \subset \mathbb{R}^n$ be an open, bounded $C^{r+2},$ respectively $C^{r+2, \gamma},$ set.
  Let  $A \in C^{r+1}(\overline{\Omega} ; L(\Lambda^{k+1},\Lambda^{k+1}))$, respectively 
 $C^{r+1, \gamma}(\overline{\Omega} ; L(\Lambda^{k+1},\Lambda^{k+1})),$ satisfy the Legendre-Hadamard condition. Then the following holds.
\begin{enumerate}
 \item There exists a constant $\rho \in \mathbb{R}$ and an at most countable set 
$\sigma \subset ( -\infty, \rho )$, with no limit points except possibly $- \infty.$
\item The following boundary value problem,
\begin{equation}\label{eigenvalue problem hodge system full regularity}
 \left\lbrace \begin{gathered}
                \delta ( A (x) d\alpha ) + d \delta \alpha  = \sigma_{i} \alpha  \text{ in } \Omega, \\
                 \nu\wedge \alpha = 0 \text{  on } \partial\Omega, \\
                 \nu\wedge \delta\alpha = 0 \text{ on } \partial\Omega.
                \end{gathered} 
                \right. \tag{$EP^{A}_{T}$}
\end{equation}
has non-trivial solutions $\alpha \in W^{r+2,p}(\Omega, \Lambda^{k}),$ respectively $C^{r+2,\gamma}(\overline{\Omega}, \Lambda^{k}),$ 
if and only if  $\sigma_{i} \in \sigma$ and the space of solutions to \eqref{eigenvalue problem hodge system full regularity} is finite-dimensional for 
any $\sigma_{i} \in \sigma.$
\item If $\lambda \notin \sigma$, then 
for any $f \in W^{r,p}(\Omega, \Lambda^{k}) $, respectively $C^{r,\gamma}(\overline{\Omega}, \Lambda^{k}),$ and any $\omega_{0} \in W^{r+2,p}(\Omega, \Lambda^{k}),$ 
respectively $C^{r+2,\gamma}(\overline{\Omega}, \Lambda^{k}),$ there exists a unique 
 solution $\omega \in W^{r+2,p}(\Omega, \Lambda^{k}),$ respectively $C^{r+2,\gamma}(\overline{\Omega}, \Lambda^{k}),$ to the following boundary value problem:
 \begin{equation}\label{bvp hodge elliptic system full regularity}
 \left\lbrace \begin{gathered}
                \delta ( A (x) d\omega )  + d\delta \omega  =  \lambda\omega + f  \text{ in } \Omega, \\
                \nu\wedge \omega = \nu\wedge\omega_{0} \text{  on } \partial\Omega. \\
                \nu\wedge \delta \omega = \nu\wedge\delta\omega_{0} \text{ on } \partial\Omega, 
                \end{gathered} 
                \right. \tag{$P^{A}_{T}$}
\end{equation}
which satisfies the estimate
\begin{align*}
 \left\lVert \omega \right\rVert_{W^{r+2,p}} \leq c \left( \left\lVert \omega\right\rVert_{W^{r,p}} + \left\lVert f\right\rVert_{W^{r,p}} 
 + \left\lVert \omega_{0} \right\rVert_{W^{r+2,p}}\right),
\end{align*}
respectively,  
\begin{align*}
 \left\lVert \omega \right\rVert_{C^{r+2,\gamma}} \leq c \left( \left\lVert \omega \right\rVert_{C^{r,\gamma}} + \left\lVert f \right\rVert_{C^{r,\gamma}} 
 + \left\lVert \omega_{0} \right\rVert_{C^{r+2,\gamma}}\right).
\end{align*}
\end{enumerate}
 \end{theorem}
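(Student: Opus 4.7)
The plan is to reduce to a weak formulation on $W_T^{1,2}(\Omega;\Lambda^k)$, obtain a compact resolvent via Lax--Milgram plus Rellich, apply Riesz--Schauder for the spectral statements, and finally bootstrap regularity using Theorems~\ref{boundary Cralpha regularity linear} and \ref{boundary Wrp regularity linear}.

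First I would reduce to homogeneous boundary data: setting $\tilde\omega := \omega - \omega_{0}$, the problem becomes the same system with $\omega_{0}=0$ and modified source $\tilde f := f + \lambda\omega_{0} - \delta(Ad\omega_{0}) - d\delta\omega_{0}$, which lies in $W^{r,p}$ (respectively $C^{r,\gamma}$) by the regularity hypotheses on $A$ and $\omega_{0}$. The natural variational formulation is: find $\tilde\omega \in W_T^{1,2}$ with
\begin{equation*}
\int_{\Omega}\langle A d\tilde\omega,d\phi\rangle + \int_{\Omega}\langle \delta\tilde\omega,\delta\phi\rangle - \lambda\int_{\Omega}\langle\tilde\omega,\phi\rangle = \int_{\Omega}\langle\tilde f,\phi\rangle
\end{equation*}
for all $\phi \in W_T^{1,2}$, the condition $\nu\wedge\delta\tilde\omega = 0$ appearing as a natural boundary condition via integration by parts.

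Using the Gaffney inequality on $W_T^{1,2}$ together with the G\aa{}rding inequality (Lemma~\ref{garding inequality}), there is $\mu_{0}>0$ for which the form $B_{\mu_{0}}(u,\phi):=\int\langle Adu,d\phi\rangle+\int\langle\delta u,\delta\phi\rangle+\mu_{0}\int\langle u,\phi\rangle$ is coercive on $W_T^{1,2}$; Lax--Milgram then yields a bounded inverse $T_{\mu_{0}}:L^{2}\to W_T^{1,2}$, and composing with the Rellich embedding gives a compact operator $K:L^{2}\to L^{2}$. Riesz--Schauder applied to $K$ provides an at most countable set of eigenvalues with finite-dimensional eigenspaces accumulating only at $0$; translating back, the spectrum $\sigma$ of the original operator is an at most countable real set, bounded above by some $\rho$ (any $\rho\geq \mu_{0}$ works, since coercivity of $B_{\mu_{0}}$ forces $\langle L u,u\rangle\leq \mu_{0}\|u\|_{L^{2}}^{2}$ for all eigenvectors) with no accumulation except possibly at $-\infty$. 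This establishes (1) and the finite-dimensionality in (2); the ``if'' direction of (2) is obtained after the regularity step below, and the ``only if'' is immediate.

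For (3), when $\lambda \notin \sigma$, the Fredholm alternative applied to $I-(\mu_{0}+\lambda)K$ produces a unique weak solution $\tilde\omega \in W_T^{1,2}$ satisfying $\|\tilde\omega\|_{W^{1,2}}\leq c\|\tilde f\|_{L^{2}}$. To upgrade regularity I would rewrite the equation in the form matching Theorems~\ref{boundary Cralpha regularity linear} and \ref{boundary Wrp regularity linear} with $F\equiv 0$ and effective source $\tilde f + \lambda\tilde\omega$, then apply the Sobolev-conjugate scheme: each application of Theorem~\ref{boundary Wrp regularity linear} gains one order of integrability via $q^{\ast}\geq p$, and after finitely many such steps $\tilde\omega \in W^{1,p}$; subsequent applications of the same theorems with increasing $r$ deliver $\tilde\omega \in W^{r+2,p}$ (respectively $C^{r+2,\gamma}$), and the asserted estimate follows after absorbing the lower-order term using the Fredholm bound. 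Undoing the reduction recovers the full statement for $\omega$. The main obstacle is the bootstrap bookkeeping: one must chase the Sobolev-conjugate chain carefully and check the compatibility of $\tilde f+\lambda\tilde\omega$ with the hypotheses of Theorems~\ref{boundary Cralpha regularity linear}/\ref{boundary Wrp regularity linear} at each stage. A minor subtlety, when $A$ is not symmetric, is that Riesz--Schauder a priori gives possibly complex eigenvalues; one resolves this by noting that the coercivity estimate controls the real part of any eigenvalue, so the real spectrum still occupies the claimed half-line.
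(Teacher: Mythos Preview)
Your proposal is correct and follows essentially the same route as the paper: reduce to homogeneous boundary data, use Gaffney plus G\aa{}rding for coercivity of $a_{\lambda}$ on $W_T^{1,2}$ for large $\lambda$, apply Lax--Milgram together with the compact embedding $W_T^{1,2}\hookrightarrow L^2$ to invoke Fredholm/Riesz--Schauder for the spectral statements, and then bootstrap regularity via Theorems~\ref{boundary Cralpha regularity linear} and \ref{boundary Wrp regularity linear}. The paper additionally spells out the integration-by-parts verification that the weak solution actually satisfies the natural boundary condition $\nu\wedge\delta\bar\omega=0$ on $\partial\Omega$, and remarks that the case $1<p<2$ is handled by density once the $L^p$ estimates are in hand, but these are precisely the points you already flag as routine.
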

 \begin{remark}\label{spectrum} (i) The regularity of the eigenform $\alpha$ solving \eqref{eigenvalue problem hodge system full regularity} is limited only by the regularity of the coefficient matrix 
 $A$ and regularity of the boundary $\partial\Omega.$ If both $A$ and $\partial\Omega$ are $C^{\infty},$ then $\alpha \in C^{\infty}(\overline{\Omega}, \Lambda^{k}).$
 
 (ii) Since $A$ satisfies only the Legendre-Hadamard condition, it is possible that there are nonnegative eigenvalues, i.e there exists $\sigma_{i} \geq 0$ in $\sigma.$ However, when $A$ has constant coefficients or satisfies the Legendre condition, since we have the inequality 
 $$\int_{\Omega} \langle A (x) d\alpha , d\alpha \rangle \geq c \int_{\Omega} \left\lvert d\alpha \right\rvert^{2}$$
  for some $c >0,$ it is easy to show that there can not exist a non-trivial solution to \eqref{eigenvalue problem hodge system full regularity} with $\sigma_{i} > 0$ and thus 
  $\sigma \subset (-\infty, 0 ]$ and \eqref{bvp hodge elliptic system full regularity} can be solved for arbitrary $f$ for any $\lambda >0.$ However, $0 \in \sigma$ is still possible. See remark \ref{lambda0tangential}. 
  \end{remark}
 \begin{remark}\label{lambda0tangential} (i) The possibility of solving \eqref{bvp hodge elliptic system full regularity} when $\lambda = 0$ is clearly the most important case. However, if there are non-trivial harmonic fields with 
 vanishing tangential component, i.e $\mathcal{H}_{T}\left( \Omega; \Lambda^{k} \right) \neq \left\lbrace 0 \right\rbrace ,$ then 
 each non-trivial  $h \in \mathcal{H}_{T}\left( \Omega; \Lambda^{k} \right)$ is always a non-trivial solution for \eqref{eigenvalue problem hodge system full regularity} 
 with $\sigma_{i} =0.$ Thus $0 \in \sigma$ and \eqref{bvp hodge elliptic system full regularity} 
 can not be solved for arbitrary $f.$ 
 
 (ii) However, when $A$ has constant coefficients or satisfies the Legendre condition, due to the same arguments as in remark \ref{spectrum}(ii), it can be shown that every non-trivial solution of 
  \eqref{eigenvalue problem hodge system full regularity} with $\sigma_{i} = 0$ must be a harmonic field. Consequently, \eqref{bvp hodge elliptic system full regularity} can be solved for all $f \in 
  \left(\mathcal{H}_{T}\left( \Omega; \Lambda^{k} \right)\right)^{\perp}.$ Moreover, the solution $\omega$ in that case 
  would be unique only up to such harmonic fields. Since harmonic fields are as smooth as $\partial\Omega$ is, the regularity results would remain valid for all such solutions.
  Of course, this extra condition of $f$ would be automatically satisfied if $\mathcal{H}_{T}\left( \Omega; \Lambda^{k} \right) = \left\lbrace 0 \right\rbrace .$ In particular, for 
  contractible domains, \eqref{bvp hodge elliptic system full regularity} with $\lambda = 0$ can be solved for arbitrary $f$ for every $k,$ as soon as 
  $A$ satisfies the Legendre condition or has constant coefficients.  
 \end{remark}

\begin{proof}
 We divide the proof in two steps. \smallskip

\noindent \emph{Step 1 (Existence in $L^{2}$):} 
 We first show existence assuming $f \in L^{p}$ with $p \geq 2.$ Clearly, this is not a restriction if $f \in C^{0,\gamma}.$ 
For a given $\lambda \in \mathbb{R}$, the bilinear operator  
$a_{\lambda}: W_{T}^{1,2}(\Omega;\Lambda^k) \times W_{T}^{1,2}(\Omega;\Lambda^k) \rightarrow \mathbb{R}$ defined by, 
\begin{align*}
 a_{\lambda}(u,v)  =\int_{\Omega}   \langle A (x)d u  , d v \rangle + \int_{\Omega}   \langle \delta u  , \delta v \rangle + \lambda \int_{\Omega} \langle  u , v \rangle ,
\end{align*}
is continuous  and coercive by virtue of the Gaffney and the G\aa{}rding inequality, for any $\lambda $ large enough. Thus Lax-Milgram theorem yields existence for $\lambda$ large.
Since $W_{T}^{1,2}(\Omega;\Lambda^k)$ embeds compactly in $L^{2}(\Omega;\Lambda^k),$ Fredholm theory holds and we conclude the existence of 
the spectrum $\sigma$, the claimed properties of the spectrum, the existence of $\overline{\omega} \in W_{T}^{1,2}(\Omega;\Lambda^k)$ solving 
\begin{align}\label{equationwithf}
 \int_{\Omega}  \langle A (x) d\overline{\omega} , d\theta \rangle + \langle \delta \overline{\omega} , \delta \theta \rangle + 
  \lambda  \int_{\Omega}\langle \overline{\omega} , \theta \rangle 
  +\int_{\Omega} \langle  g, \theta\rangle = 0  , 
\end{align} for any $g \in L^{2}(\Omega, \Lambda^{k})$ when $\lambda \notin \sigma$ and the existence of $\alpha \in W_{T}^{1,2}(\Omega;\Lambda^k)$ solving
\begin{align}\label{eigenvalueequation}
 \int_{\Omega} \langle A (x)d\alpha , d\theta \rangle +  \langle \delta \alpha , \delta \theta \rangle + \sigma_{i}  \int_{\Omega}\langle \alpha , \theta \rangle
   = 0,  
\end{align} for any $\sigma_{i} \in \sigma,$ along with the finite-dimensionality of the eigenspaces.\smallskip

\noindent \emph{Step 2 (Regularity and boundary conditions):} Now theorem \ref{boundary Wrp regularity linear}, respectively theorem \ref{boundary Cralpha regularity linear} in the H\"{o}lder case, 
gives us the desired regularity. The estimates for $1 < p < 2$ also extends the existence theory to the case $1 < p < 2$ by usual density arguments.   
From \eqref{equationwithf}, integrating by parts, we obtain,
$$ \int_{\Omega} \langle \delta  ( A (x) d\overline{\omega} )   + d \delta \overline{\omega} ;  \phi \rangle -  \int_{\partial\Omega} \left( \langle A(x) d\overline{\omega}; \nu\wedge\phi \rangle   
+  \langle \nu\wedge\delta\omega ; \phi \rangle \right)
    = \int_{\Omega} \langle  \lambda\omega + g ; \phi \rangle ,    $$
    for all  $ \phi \in W_{T}^{1,2}(\Omega, \Lambda^{k}).$
 Thus taking  $\phi \in C_{c}^{\infty}(\Omega, \Lambda^{k})$ we have, 
 $$\delta ( A(x) d\overline{\omega} )   + d \delta \overline{\omega} = \lambda\overline{\omega} + g \qquad \text{ in } \Omega. $$
 But this implies that the integral on the boundary vanish separately and thus,
 $$ \int_{\partial\Omega}   \langle \nu\wedge\delta \overline{\omega} ; \phi \rangle  =0 \qquad \text{ for any } \phi \in W_{T}^{1,2}(\Omega; \Lambda^{k}).$$ 
  Extending $\nu$ as a $C^{1}$ function inside $\Omega,$ we see that $ \nu\wedge\delta \overline{\omega}  \in  W_{T}^{1,2}(\Omega, \Lambda^{k})$ 
 and thus, we have,
 $$ \int_{\partial\Omega}   \left\lvert \nu\wedge\delta \overline{\omega}\right\rvert^{2} = 
 \int_{\partial\Omega}   \langle \nu\wedge\delta \overline{\omega} ; \nu\wedge\delta \overline{\omega}  \rangle =  0.$$
This proves that $\nu\wedge\delta \overline{\omega} = 0$ on $\partial\Omega.$ Now taking $g= f + \lambda \omega_{0} -\delta (A (x) d\omega_{0}) - d\delta\omega_{0} $ and setting $\omega = \overline{\omega} + \omega_{0},$ we immediately see that $\omega $ 
    is a solution to \eqref{bvp hodge elliptic system full regularity} with the desired regularity. Similarly, we obtain from \eqref{eigenvalueequation} that $\alpha$ is 
    a solution to \eqref{eigenvalue problem hodge system full regularity}, i.e an eigenform. This finishes the proof. 
\end{proof}

\begin{remark}\label{bvp hodge normal}
(i) Similar arguments on $W^{1,2}_{N}$ for existence and the regularity estimates yield analogous statements for the system  
 \begin{equation}\label{bvp hodge elliptic system full regularity normal}
 \left\lbrace \begin{gathered}
                \delta ( A (x) d\omega )  + d\delta \omega  =  \lambda\omega + f  \text{ in } \Omega, \\
                \nu\lrcorner \omega = \nu\lrcorner\omega_{0} \text{  on } \partial\Omega. \\
                \nu\lrcorner \left( A(x) d\omega \right) = \nu\lrcorner \left( A(x) d\omega_{0} \right) \text{ on } \partial\Omega, 
                \end{gathered} 
                \right. \tag{$P^{A}_{N}$}
\end{equation}
and the corresponding eigenvalue problem when $A$ satisfies the Legendre condition. 

(ii) Due to the stronger ellipticity hypothesis, 
\eqref{bvp hodge elliptic system full regularity normal} can always be solved for arbitrary $f$ with $\lambda >0$ and 
can be solved with $\lambda = 0$ for all $f \in 
  \left(\mathcal{H}_{N}\left( \Omega; \Lambda^{k} \right)\right)^{\perp}.$ If $\mathcal{H}_{N}\left( \Omega; \Lambda^{k} \right) = \lbrace 0 \rbrace,$ \eqref{bvp hodge elliptic system full regularity normal} 
  can be solved for arbitrary $f$ for any $\lambda \geq 0$ and in particular, if $\Omega$ is contractible, this holds true for any $k.$
\end{remark}
 Theorem \ref{second order hodge system tangential} and remark \ref{bvp hodge normal} imply a new proof of the regularity results for the Hodge Laplcian 
 and consequently the Hodge decomposition theorems (see e.g Theorem 6.21 in \cite{CsatoDacKneuss} for the different versions of the theorem).

\subsection{Maxwell operator: A simple case}\label{maxwell A}
\begin{theorem}\label{maxwell linear regularity full}
 Let $1 \leq k \leq n-1,$ $r \geq 0$ be integers and $ 0 < \gamma < 1$ and $1 < p < \infty$ be real numbers. 
 Let $\Omega \subset \mathbb{R}^n$ be an open, bounded $C^{r+2},$ respectively $C^{r+2, \gamma},$ set.
  Let $A \in C^{r+1}(\overline{\Omega} ; L(\Lambda^{k+1},\Lambda^{k+1}))$, respectively 
 $C^{r+1, \gamma}(\overline{\Omega} ; L(\Lambda^{k+1},\Lambda^{k+1})),$ satisfy the Legendre-Hadamard condition. Then the following holds.
\begin{enumerate}
 \item There exists a constant $\rho \in \mathbb{R}$ and an at most countable set 
$\sigma \subset ( -\infty, \rho )$, with no limit points except possibly $- \infty.$
\item The following boundary value problem,
\begin{equation}\label{eigenvalue problem full regularity}
 \left\lbrace \begin{gathered}
                \delta ( A (x) d\alpha )   = \sigma_{i} \alpha  \text{ in } \Omega, \\
                \delta \alpha = 0 \text{ in } \Omega, \\
                \nu\wedge \alpha = 0 \text{  on } \partial\Omega,
                \end{gathered} 
                \right. \tag{$EM^{A}_{T}$}
\end{equation}
has non-trivial solutions $\alpha \in W^{r+2,p}(\Omega, \Lambda^{k}),$ respectively $C^{r+2,\gamma}(\overline{\Omega}, \Lambda^{k}),$ if and only if $\sigma_{i} \in \sigma$ and the space of solutions to \eqref{eigenvalue problem full regularity} is finite-dimensional for any $\sigma_{i} \in \sigma$.
\item If $\lambda \notin \sigma$, then 
for any $f \in W^{r,p}(\Omega, \Lambda^{k}) $, respectively $C^{r,\gamma}(\overline{\Omega}, \Lambda^{k}),$ satisfying $\delta f = 0$ in the sense of distributions in $\Omega,$ there exists a unique
 solution $\omega \in W^{r+2,p}(\Omega, \Lambda^{k}),$ respectively $C^{r+2,\gamma}(\overline{\Omega}, \Lambda^{k}),$ to the following boundary value problem:
 \begin{equation}\label{bvp maxwell delta 0 full}
 \left\lbrace \begin{gathered}
                \delta ( A (x) d\omega )   = \lambda \omega + f  \text{ in } \Omega, \\
                \delta \omega = 0 \text{ in } \Omega, \\
                \nu\wedge \omega = 0 \text{  on } \partial\Omega.
                \end{gathered} 
                \right. \tag{$PM^{A}_{T}$}
\end{equation}
which satisfies the estimate
\begin{align*}
 \left\lVert \omega \right\rVert_{W^{r+2,p}} \leq c \left( \left\lVert \omega\right\rVert_{W^{r,p}} + \left\lVert f\right\rVert_{W^{r,p}} \right),
\end{align*}
respectively,  
\begin{align*}
 \left\lVert \omega \right\rVert_{C^{r+2,\gamma}} \leq c \left( \left\lVert \omega \right\rVert_{C^{r,\gamma}} + \left\lVert f \right\rVert_{C^{r,\gamma}} \right).
\end{align*}
\end{enumerate}
 \end{theorem}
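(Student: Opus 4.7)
The plan is to mirror the three-step strategy used for Theorem~\ref{second order hodge system tangential}, adapting it to the divergence-free setting by working on the constrained Hilbert space $W^{1,2}_{\delta,T}(\Omega;\Lambda^k)$. On this space I would consider the bilinear form
\[
 b_\lambda(u,v) := \int_\Omega \langle A(x)\,du,\, dv\rangle + \lambda \int_\Omega \langle u,\,v\rangle.
\]
Since $\delta u = 0$ for every $u \in W^{1,2}_{\delta,T}$, the Gaffney inequality reduces to $\|u\|_{W^{1,2}}^2 \leq c(\|du\|_{L^2}^2 + \|u\|_{L^2}^2)$, and combined with the G\aa{}rding inequality (Lemma~\ref{garding inequality}) this makes $b_\lambda$ coercive for all sufficiently large $\lambda$. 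Lax--Milgram then yields existence, and the compact embedding $W^{1,2}_{\delta,T} \hookrightarrow L^2$ together with Fredholm theory produces the spectrum $\sigma$ with the asserted countability and accumulation properties, the finite-dimensionality of the eigenspaces, and solvability of the constrained variational equation for every $\lambda \notin \sigma$.

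The main obstacle is then to pass from this constrained variational identity to the strong PDE, since the admissible test functions are restricted to $W^{1,2}_{\delta,T}$. My approach is to observe that the continuous linear functional $L(\phi) := b_\lambda(\omega,\phi) + \int_\Omega \langle f,\phi\rangle$ on $W^{1,2}_T$ vanishes on the closed subspace $W^{1,2}_{\delta,T}$, so a closed-range / de~Rham-type argument should produce a $p \in W^{1,2}(\Omega;\Lambda^{k-1})$ carrying the appropriate natural boundary condition such that, distributionally,
\[
 \delta(A\,d\omega) + dp = \lambda\omega + f \quad \text{in } \Omega.
\]
Applying $\delta$ to both sides and using $\delta\delta = 0$, $\delta\omega = 0$, together with the hypothesis $\delta f = 0$, yields $\delta\,dp = 0$; coupled with the boundary condition on $p$, this should force $dp \equiv 0$, so $\omega$ solves $\delta(A\,d\omega) = \lambda\omega + f$ distributionally. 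The eigenvalue problem~\eqref{eigenvalue problem full regularity} is handled identically.

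For regularity I would then observe that $\omega$ now satisfies the full Hodge system $\delta(A\,d\omega) + d\delta\omega = \lambda\omega + f$ with boundary data $\nu\wedge\omega = 0$ and $\nu\wedge\delta\omega = 0$ (the latter being automatic from $\delta\omega \equiv 0$ in $\Omega$), so the $W^{r+2,p}$, respectively $C^{r+2,\gamma}$, regularity and the stated estimate follow at once from Theorem~\ref{boundary Wrp regularity linear}, respectively Theorem~\ref{boundary Cralpha regularity linear}. The hard part is the middle step: identifying the Lagrange multiplier $p$ with the correct natural boundary condition and confirming its cancellation via the compatibility $\delta f = 0$. This is the precise point where the structural interplay between the co-closed solution, the co-closed forcing, and the tangential boundary condition is essential.
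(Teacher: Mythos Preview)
Your strategy is viable but takes a different, and more roundabout, path than the paper. The paper never works on the constrained space $W^{1,2}_{\delta,T}$ and never introduces a Lagrange multiplier. Instead it simply invokes Theorem~\ref{second order hodge system tangential} for the \emph{unconstrained} Hodge system $\delta(A\,d\omega)+d\delta\omega=\lambda\omega+f$ with both boundary conditions, and then corrects the resulting solution so that it becomes co-closed: applying $\delta$ to the equation and using $\delta f=0$ shows that $v:=\delta\omega$ satisfies $(\delta d+d\delta)v=\lambda v$ with $\nu\wedge v=0$, $\nu\wedge\delta v=0$. For $\lambda=0$ this forces $v\in\mathcal{H}_T(\Omega;\Lambda^{k-1})$, and since $v$ is coexact it must vanish; for $\lambda\neq 0$ one replaces $\omega$ by $\bar\omega=\omega-\tfrac{1}{\lambda}\,d\delta\omega$, which satisfies $\delta\bar\omega=0$, $\nu\wedge\bar\omega=0$, and $\delta(A\,d\bar\omega)=\lambda\bar\omega+f$. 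Regularity and the estimates are then inherited verbatim from Theorem~\ref{second order hodge system tangential}. The eigenvalue statement is handled by the same correction applied to eigenforms.

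Your route can be completed, but the step you flag as ``the hard part'' really is incomplete as written. You assert that the functional vanishing on $W^{1,2}_{\delta,T}$ produces a multiplier $p$ ``carrying the appropriate natural boundary condition'' and that this, together with $\delta dp=0$, ``should force $dp\equiv 0$''. Neither claim is justified: you have not identified which boundary condition $p$ actually inherits from the closed-range argument (note that test functions $\phi\in W^{1,2}_T$ have $\nu\wedge\phi=0$ but $\nu\lrcorner\phi$ unconstrained, so the boundary pairing against $p$ involves $\nu\lrcorner\phi$), and without that boundary condition the conclusion $dp=0$ does not follow from $\delta dp=0$ alone. These gaps can be filled---indeed the paper's Theorem~\ref{Stokes tangential} handles the Stokes version, and uses Hodge--Morrey decomposition rather than a closed-range argument to produce $p$---but the paper's subtraction trick bypasses all of this with a two-line algebraic computation, which is why it is preferred.
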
\smallskip
 \begin{remark}\label{lambda0maxwelltangential}
  In this case too, remarks similar to remark \ref{spectrum} and \ref{lambda0tangential} applies. In particular, if $\Omega$ is contractible and $A$ satisfies the Legendre condition or 
  has constant coefficients, then for any $k,$ the problem 
  \eqref{bvp maxwell delta 0 full} can be solved for arbitrary $f$ satisfying $\delta f = 0$ for any $\lambda \geq 0.$ Also, eigenforms are $C^{\infty}$ if both $A$ and $\partial\Omega$ are $C^{\infty}$. 
 \end{remark}
 
 \begin{proof} By theorem \ref{second order hodge system tangential}, we only need to show is that if $\delta f = 0,$ then every solution to 
 \eqref{bvp hodge elliptic system full regularity} with $\lambda \in \mathbb{R}$ gives rise to a solution of \eqref{bvp maxwell delta 0 full} with the same $\lambda$ and that every solution of 
 \eqref{eigenvalue problem hodge system full regularity} for $\sigma_{i} \in \mathbb{R}$ yields a solution of \eqref{eigenvalue problem full regularity} with the 
 same $\sigma_{i}.$  We start with the second. Note that if $\delta\alpha = 0,$ then $\alpha$ itself solves \eqref{eigenvalue problem full regularity}. Taking $\delta $ of the equation, 
 we deduce from \eqref{eigenvalue problem hodge system full regularity}, 
 \begin{equation*}
\left( \delta d + d \delta \right) \left( \delta \alpha \right) =  \delta d  \delta \alpha =  \delta \delta ( A (x) d\alpha )+ \delta d  \delta \alpha = \delta 
\left( \sigma_{i}\alpha \right) = \sigma_{i} \left( \delta \alpha\right) \qquad \text{ in } \Omega, \end{equation*}
and $ \nu\wedge \delta\alpha = 0$ and $\nu\wedge \delta\delta\alpha = 0 $ on $\partial\Omega.$ Thus, unless $\delta\alpha = 0,$ $\sigma_{i}$ is an eigenvalue for the Hodge Laplacian 
on $\left( k-1 \right)$-forms with $ \delta \alpha$ as an eigenform. Now 
if $\sigma_{i} = 0,$ then we see that $\delta\alpha = h \in \mathcal{H}_{T} \left( \Omega; \Lambda^{k-1} \right).$ But no non-zero harmonic field can be coexact. If $\sigma_{i} \neq 0,$ then it is easy to check that
$\bar{\alpha} = \alpha - \frac{1}{\sigma_{i}} d \delta\alpha $ solves \eqref{eigenvalue problem full regularity} and has the same regularity as $\alpha.$ The first one is exactly the same. Once again, taking $\delta$ of \eqref{bvp hodge elliptic system full regularity}  and using the fact that 
$\delta f = 0,$ we see similarly that if $\lambda = 0,$ $\omega$ itself and if $\lambda \neq 0,$
 $\bar{\omega} = \omega - \frac{1}{\lambda}d\delta\omega$ solves \eqref{bvp maxwell delta 0 full} and has the same regularity as $\omega.$ 
 \end{proof}

\noindent By similar arguments, we also have
\begin{theorem}\label{maxwell linear regularity full normal}
Let $1 \leq k \leq n-1,$ $r \geq 0$ be integers and $ 0 < \gamma < 1$ and $1 < p < \infty$ be real numbers. 
Let $\Omega \subset \mathbb{R}^n$ be an open, bounded $C^{r+2},$ respectively $C^{r+2, \gamma},$ set.
Let $A \in C^{r+1}(\overline{\Omega} ; L(\Lambda^{k+1},\Lambda^{k+1}))$, respectively 
 $C^{r+1, \gamma}(\overline{\Omega} ; L(\Lambda^{k+1},\Lambda^{k+1})),$ satisfy the Legendre condition. 
Then the following holds.
\begin{enumerate}
 \item There exists a constant $\rho \in \mathbb{R}$ and an at most countable set 
$\sigma \subset ( -\infty, \rho )$, with no limit points except possibly $- \infty.$
\item The following boundary value problem,
\begin{equation}\label{eigenvalue problem full regularity normal}
 \left\lbrace \begin{gathered}
                \delta ( A (x) d\alpha )   = \sigma_{i} \alpha  \text{ in } \Omega, \\
                \delta \alpha = 0 \text{ in } \Omega, \\
                \nu\lrcorner \alpha = 0 \text{  on } \partial\Omega, \\
                \nu\lrcorner \left( A d\alpha \right) = 0 \text{  on } \partial\Omega,
                \end{gathered} 
                \right. \tag{$EM^{A}_{N}$}
\end{equation}
has non-trivial solutions $\alpha \in W^{r+2,p}(\Omega, \Lambda^{k}),$ respectively $C^{r+2,\gamma}(\overline{\Omega}, \Lambda^{k}),$ if and only if $\sigma_{i} \in \sigma$ 
and the space of solutions to \eqref{eigenvalue problem full regularity normal} is finite-dimensional for any $\sigma_{i} \in \sigma$.
\item If $\lambda \notin \sigma$, then 
for any $f \in W^{r,p}(\Omega, \Lambda^{k}) $, respectively $C^{r,\gamma}(\overline{\Omega}, \Lambda^{k}),$ satisfying $\delta f = 0$  in the sense of distributions 
in $\Omega$ and $ \nu\lrcorner f = 0$ 
on $\partial\Omega,$ there exists a
 solution $\omega \in W^{r+2,p}(\Omega, \Lambda^{k}),$ respectively $C^{r+2,\gamma}(\overline{\Omega}, \Lambda^{k}),$  unique upto harmonic fields, to the following boundary value problem:
 \begin{equation}\label{bvp maxwell delta 0 full normal}
 \left\lbrace \begin{gathered}
                \delta ( A (x) d\omega )   = \lambda \omega + f  \text{ in } \Omega, \\
                \delta \omega = 0 \text{ in } \Omega, \\
                \nu\lrcorner \omega = 0 \text{  on } \partial\Omega, \\
                \nu\lrcorner \left( A d\omega \right) = 0 \text{  on } \partial\Omega.
                \end{gathered} 
                \right. \tag{$PM^{A}_{N}$}
\end{equation}
which satisfies the estimate
\begin{align*}
 \left\lVert \omega \right\rVert_{W^{r+2,p}} \leq c \left\lbrace \left\lVert \omega\right\rVert_{W^{r,p}} + \left\lVert f\right\rVert_{W^{r,p}} \right\rbrace,
\end{align*}
respectively,  
\begin{align*}
 \left\lVert \omega \right\rVert_{C^{r+2,\gamma}} \leq c \left\lbrace \left\lVert \omega \right\rVert_{C^{r,\gamma}} + \left\lVert f \right\rVert_{C^{r,\gamma}} \right\rbrace.
\end{align*}
\end{enumerate}
 \end{theorem}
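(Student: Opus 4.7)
The plan is to parallel the proof of Theorem \ref{maxwell linear regularity full}, with Remark \ref{bvp hodge normal} playing the role of Theorem \ref{second order hodge system tangential}. Since $A$ satisfies the Legendre condition, that remark already furnishes the spectrum $\sigma$, the finite-dimensional eigenspaces, and the existence and regularity statements for the normal Hodge-type boundary value problem
\begin{equation*}
\delta(A(x)d\omega) + d\delta\omega = \lambda\omega + f, \quad \nu\lrcorner\omega = 0, \quad \nu\lrcorner(A(x)d\omega) = 0.
\end{equation*}
The remaining task is to convert its solutions into solutions of \eqref{bvp maxwell delta 0 full normal} (and likewise eigenforms into eigenforms) using the compatibility conditions $\delta f = 0$ in $\Omega$ and $\nu\lrcorner f = 0$ on $\partial\Omega$.

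The algebraic heart is to apply $\delta$ to the equation: using $\delta^{2} = 0$ and $\delta f = 0$ one obtains $\delta d\delta\omega = \lambda\delta\omega$, i.e.\ $\Delta(\delta\omega) = \lambda\delta\omega$ in $\Omega$. I would then verify the two normal Hodge-Laplacian boundary conditions $\nu\lrcorner\delta\omega = 0$ and $\nu\lrcorner d\delta\omega = 0$ for $\delta\omega$. Both rely on the Hodge-dual identity
\begin{equation*}
\nu\lrcorner\eta = 0 \text{ on } \partial\Omega \quad \Longrightarrow \quad \nu\lrcorner\delta\eta = 0 \text{ on } \partial\Omega,
\end{equation*}
which reduces through $\ast$ to the classical fact that $i^{\ast}\zeta = 0$ implies $i^{\ast}d\zeta = 0$ (with $i:\partial\Omega\to\Omega$ the inclusion). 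The first condition is the identity applied to $\eta = \omega$. The second comes from rewriting the PDE as $d\delta\omega = \lambda\omega + f - \delta(Ad\omega)$ and contracting with $\nu$: the first two terms vanish by $\nu\lrcorner\omega = 0$ and the hypothesis $\nu\lrcorner f = 0$, while the third vanishes by the identity applied to $\eta = Ad\omega$.

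Thus $\delta\omega$ is an eigenform of the normal Hodge Laplacian on $(k-1)$-forms with eigenvalue $\lambda$. If $\lambda = 0$, one integration by parts using the two boundary conditions just derived forces $d\delta\omega = 0$ in $\Omega$, so $\delta\omega \in \mathcal{H}_{N}(\Omega;\Lambda^{k-1})$; a second integration by parts exploiting $\nu\lrcorner\delta\omega = 0$ (equivalently, the fact that no nonzero coexact form lies in $\mathcal{H}_{N}$) annihilates $\delta\omega$, and $\omega$ itself solves \eqref{bvp maxwell delta 0 full normal}. If $\lambda \neq 0$, set $\bar\omega := \omega - \lambda^{-1}d\delta\omega$; direct computation using $d^{2} = 0$ and the identities already obtained yields $\delta\bar\omega = 0$, $\delta(Ad\bar\omega) = \delta(Ad\omega) = \lambda\bar\omega + f$, $\nu\lrcorner\bar\omega = -\lambda^{-1}\nu\lrcorner d\delta\omega = 0$, and $\nu\lrcorner(Ad\bar\omega) = \nu\lrcorner(Ad\omega) = 0$. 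Applying the same recipe with $f = 0$ transports Hodge eigenforms to eigenforms of \eqref{eigenvalue problem full regularity normal}; the converse inclusion is trivial (any solution of \eqref{eigenvalue problem full regularity normal} automatically solves the Hodge system with $\sigma_{i}\alpha$ on the right), giving the spectral correspondence and the finite-dimensionality.

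The main obstacle is the Hodge-dual boundary identity and the bookkeeping it entails; this is precisely where the hypothesis $\nu\lrcorner f = 0$ enters, in contrast to Theorem \ref{maxwell linear regularity full}, whose proof needed only $\delta f = 0$ because there the tangential analogue $\nu\wedge\zeta = 0 \Rightarrow \nu\wedge d\zeta = 0$ applied directly to the given boundary condition $\nu\wedge\delta\omega = 0$ made any hypothesis on $f$ along the boundary superfluous.
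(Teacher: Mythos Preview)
Your proposal is correct and follows precisely the approach the paper intends: the paper disposes of Theorem \ref{maxwell linear regularity full normal} with the single phrase ``By similar arguments,'' referring back to the proof of Theorem \ref{maxwell linear regularity full}, and you have faithfully carried out that parallel using Remark \ref{bvp hodge normal} in place of Theorem \ref{second order hodge system tangential}. Your explicit bookkeeping of the boundary identities---in particular the observation that $\nu\lrcorner d\delta\omega=0$ requires the extra hypothesis $\nu\lrcorner f=0$ (which has no counterpart in the tangential case)---is exactly the new ingredient the normal version demands, and it is handled correctly.
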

 
 \begin{remark}\label{lambda0maxwellnormal}
(i)  When $r=0,$ the condition $ \nu\lrcorner f = 0$ 
on $\partial\Omega$ in the Sobolev case is to be interpreted as
$$ \int_{\Omega} \left\langle f; d\psi \right\rangle  = 0 \qquad \text{ for any } \psi \in C^{\infty}\left(\overline{\Omega}; \Lambda^{k-1} \right).$$ Note that when $\psi$ is compactly supported, 
the integral vanishes anyway due to the condition $\delta f = 0.$ This extra condition, which can be easily seen to be a necessary one, is analogous the integral compatibility condition 
for the Neumann boundary value problem for the Laplacian in the scalar case.

(ii) Considerations similar to Remark \ref{bvp hodge normal}(ii) apply here as well for the solvability with $\lambda \geq 0$ for all $f$ satisfying the compatibility conditions 
$\delta f = 0$ in $\Omega$ and $ \nu\lrcorner f = 0$ 
on $\partial\Omega$. \end{remark}

\subsection{Stokes type operator}\label{stokes}
The last results give us regularity results for some stationary Stokes type problem. We just use Hodge-Morrey decomposition 
with vanishing tangential component to write $ f = d\phi + \tilde{f},$ with $\delta\tilde{f} = 0$ in $\Omega.$ Then we solve the boundary value problem (see Theorem 8.16 in \cite{CsatoDacKneuss})
$$ dp =  - d\phi \text{ in } \Omega, \qquad \qquad  p = p_{0} \text{ on } \partial\Omega.$$ This reduces the next theorem to Theorem \ref{maxwell linear regularity full}. 
\begin{theorem}\label{Stokes tangential}
 Let $1 \leq k \leq n-1,$ $r \geq 0$ be integers and $ 0 <  \gamma < 1$ and $1 < p < \infty$ be real numbers. Let $\Omega \subset \mathbb{R}^n$ be an open, bounded $C^{r+2},$ respectively $C^{r+2, \gamma},$ set.
  Let $A \in C^{r+1}(\overline{\Omega} ; L(\Lambda^{k+1},\Lambda^{k+1}))$, respectively 
 $C^{r+1, \gamma}(\overline{\Omega} ; L(\Lambda^{k+1},\Lambda^{k+1})),$ satisfy the Legendre-Hadamard condition.
 Let $\sigma$ be the spectrum, as in theorem \ref{maxwell linear regularity full}.
Then for any $\lambda \notin \sigma$ and for any $f \in W^{r,p}(\Omega, \Lambda^{k}) $, respectively $C^{r,\gamma}(\overline{\Omega}, \Lambda^{k}),$ any 
$p_{0} \in W^{r+1,p}(\Omega, \Lambda^{k-1}) $, respectively $C^{r+1,\gamma}(\overline{\Omega}, \Lambda^{k-1}),$ with $\nu\wedge dp_{0} = 0$ on $\partial\Omega,$
there exists a unique pair $\left( \omega, p \right)$ such that $\omega \in W^{r+2,p}(\Omega, \Lambda^{k}),$ respectively $C^{r+2,\gamma}(\overline{\Omega}, \Lambda^{k}),$ 
$p \in W^{r+1,p}(\Omega, \Lambda^{k-1}),$ respectively $C^{r+1,\gamma}(\overline{\Omega}, \Lambda^{k-1}),$ solve the  following boundary value problem:
 \begin{equation}\label{bvp stokes full}
 \left\lbrace \begin{gathered}
                \delta ( A (x) d\omega ) + dp  = \lambda \omega + f   \text{ in } \Omega, \\
                \delta \omega = 0 \text{ in } \Omega, \\
                \nu\wedge \omega = 0 \text{  on } \partial\Omega \\
                p = p_{0} \text{  on } \partial\Omega. 
                \end{gathered} 
                \right. \tag{$PS_{T}$}
\end{equation}
which satisfies the estimates
\begin{align*}
 \left\lVert \omega \right\rVert_{W^{r+2,p}} \leq c \left( \left\lVert \omega\right\rVert_{W^{r,p}} + \left\lVert f\right\rVert_{W^{r,p}} \right)  \\ 
 \intertext{ and } 
 \left\lVert p \right\rVert_{W^{r+1,p}} \leq c \left( \left\lVert f\right\rVert_{W^{r,p}} + \left\lVert p_{0}\right\rVert_{W^{r+1,p}} \right), 
\end{align*}
respectively,  
\begin{align*}
 \left\lVert \omega \right\rVert_{C^{r+2,\gamma}} \leq c \left( \left\lVert \omega \right\rVert_{C^{r,\gamma}} + \left\lVert f \right\rVert_{C^{r,\gamma}} \right) \\ 
 \intertext{ and } \quad \left\lVert p \right\rVert_{C^{r+1,\gamma}} \leq c\left( \left\lVert f \right\rVert_{C^{r,\gamma}}+ \left\lVert p_{0} \right\rVert_{C^{r+1,\gamma}} \right) .
\end{align*}
\end{theorem}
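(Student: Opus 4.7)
The strategy, already outlined in the remarks preceding the statement, is to reduce \eqref{bvp stokes full} to the Maxwell-type system \eqref{bvp maxwell delta 0 full} solved in Theorem \ref{maxwell linear regularity full} by absorbing the exact part of $f$ into the pressure gradient. First, I would invoke the Hodge-Morrey decomposition with vanishing tangential component (cf.\ \cite{CsatoDacKneuss}) to write $f = d\phi + \tilde{f}$ in $\Omega$, with $\phi \in W^{r+1,p}(\Omega;\Lambda^{k-1})$ (respectively $C^{r+1,\gamma}$) satisfying $\nu \wedge \phi = 0$ on $\partial\Omega$, and $\tilde{f}$ of the same regularity as $f$ with $\delta \tilde{f} = 0$ in $\Omega$. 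The required regularity of $\phi$ and $\tilde f$ follows from the elliptic theory of the Hodge Laplacian established in Theorem \ref{second order hodge system tangential}, and since $d$ commutes with pullback to $\partial\Omega$, the condition $\nu \wedge \phi = 0$ propagates to $\nu \wedge d\phi = 0$ on $\partial\Omega$.

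Next, using Theorem 8.16 of \cite{CsatoDacKneuss}, I would solve the pressure problem
$$ dp = d\phi \text{ in } \Omega, \qquad p = p_{0} \text{ on } \partial\Omega, $$
whose compatibility condition $\nu \wedge d\phi = \nu \wedge dp_{0}$ on $\partial\Omega$ holds because both sides vanish. This yields $p$ of the claimed regularity together with the stated estimate $\lVert p \rVert_{W^{r+1,p}} \leq c(\lVert f \rVert_{W^{r,p}} + \lVert p_{0} \rVert_{W^{r+1,p}})$. Since $\delta \tilde{f} = 0$ in $\Omega$ and $\lambda \notin \sigma$, Theorem \ref{maxwell linear regularity full} then provides a unique $\omega$ of the required regularity solving $\delta(A(x) d\omega) = \lambda \omega + \tilde{f}$, $\delta \omega = 0$ in $\Omega$, and $\nu \wedge \omega = 0$ on $\partial\Omega$, with its associated estimate. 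Adding $dp = d\phi$ on both sides gives $\delta(A d\omega) + dp = \lambda \omega + \tilde{f} + d\phi = \lambda \omega + f$, so $(\omega, p)$ solves \eqref{bvp stokes full} with all the claimed regularity and estimates.

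For uniqueness, I would take the difference $(\omega, p)$ of two solutions, which satisfies the homogeneous system with $p = 0$ on $\partial\Omega$. Applying $\delta$ to the bulk equation and using $\delta \omega = 0$ yields $\delta dp = 0$ in $\Omega$; pairing with $p$ and integrating by parts using the full Dirichlet condition $p = 0$ on $\partial\Omega$ forces $dp \equiv 0$ in $\Omega$. The equation then reduces to $\delta(A d\omega) = \lambda \omega$ with $\delta \omega = 0$ and $\nu \wedge \omega = 0$, so $\omega = 0$ by $\lambda \notin \sigma$ via Theorem \ref{maxwell linear regularity full}, and the vanishing of $p$ (modulo the harmonic-field ambiguity flagged at the start of Section 4) follows from $dp = 0$ with vanishing Dirichlet trace. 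The main point requiring care is to track the regularity of the Hodge-Morrey decomposition and of the auxiliary pressure BVP across the whole Sobolev and H\"{o}lder scale; this is essentially bookkeeping once Theorem \ref{second order hodge system tangential} and the tools from \cite{CsatoDacKneuss} are in hand.
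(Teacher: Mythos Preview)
Your approach is exactly the paper's: Hodge--Morrey decompose $f = d\phi + \tilde f$, solve the first-order BVP for $p$ via Theorem~8.16 of \cite{CsatoDacKneuss}, and reduce to Theorem~\ref{maxwell linear regularity full}. (Your sign $dp = d\phi$ is the one that actually makes the reduction work; the paper's $dp = -d\phi$ is a slip.)

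One caution on the uniqueness argument, which the paper does not spell out: your final step ``$dp = 0$ in $\Omega$ and $p|_{\partial\Omega} = 0$ imply $p = 0$'' is fine for $k=1$ (then $p$ is a scalar, hence constant, hence zero), but for $k \geq 2$ it is not automatic and is \emph{not} a harmonic-field issue. For instance, on the unit ball with $k=2$, the $1$-form $p = d\bigl((|x|^2 - 1)^2\bigr)$ is closed, smooth, and vanishes on $\partial\Omega$, yet $p \not\equiv 0$; the pair $(\omega,p) = (0,p)$ then solves the homogeneous \eqref{bvp stokes full}. So the uniqueness of $p$ as stated requires an additional normalization (the construction via Theorem~8.16 in \cite{CsatoDacKneuss} singles out a particular $p$, but uniqueness among \emph{all} pairs solving \eqref{bvp stokes full} fails for $k\geq 2$). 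This is a wrinkle in the theorem statement itself rather than in your reduction.
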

\begin{remark}
 Considerations similar to remark \ref{lambda0maxwelltangential} apply here as well. 
\end{remark}

By using Hodge-Morrey decomposition with vanishing normal component to write  $ f = -dp + \tilde{f},$ with $\delta\tilde{f} = 0$ in $\Omega$ and 
$\nu\lrcorner\tilde{f} = 0$ on $\partial\Omega,$ we also have the following theorem.
\begin{theorem}\label{Stokes normal}
 Let $1 \leq k \leq n-1,$ $r \geq 0$ be integers and $ 0 < \gamma < 1$ and $1 < p < \infty$ be real numbers. 
 Let $\Omega \subset \mathbb{R}^n$ be an open, bounded $C^{r+2},$ respectively $C^{r+2, \gamma},$ set.
 Let $A \in C^{r+1}(\overline{\Omega} ; L(\Lambda^{k+1},\Lambda^{k+1}))$, respectively 
 $C^{r+1, \gamma}(\overline{\Omega} ; L(\Lambda^{k+1},\Lambda^{k+1})),$ satisfy the Legendre condition.
 Let $\sigma$ be the spectrum, as given by theorem \ref{maxwell linear regularity full normal}.
Then for any $\lambda \notin \sigma$, for any $f \in W^{r,p}(\Omega, \Lambda^{k}) $, respectively $C^{r,\gamma}(\overline{\Omega}, \Lambda^{k}),$ there exists a unique pair
$\left( \omega, p \right)$ such that $\omega \in W^{r+2,p}(\Omega, \Lambda^{k}),$ respectively $C^{r+2,\gamma}(\overline{\Omega}, \Lambda^{k}),$ 
$p \in W^{r+1,p}(\Omega, \Lambda^{k-1}),$ respectively $C^{r+1,\gamma}(\overline{\Omega}, \Lambda^{k-1}),$ solve the  following boundary value problem:
 \begin{equation}\label{bvp stokes normal}
 \left\lbrace \begin{gathered}
                \delta ( A (x) d\omega ) + dp  = \lambda \omega + f  \text{ in } \Omega, \\
                \delta \omega = 0 \text{ in } \Omega, \\
                \nu\lrcorner \omega = 0 \text{  on } \partial\Omega, \\
                \nu\lrcorner \left( A(x) d \omega \right)  = 0 \text{  on } \partial\Omega.
                \end{gathered} 
                \right. \tag{$PS_{N}$}
\end{equation}
and satisfies the estimate
\begin{align*}
 \left\lVert \omega \right\rVert_{W^{r+2,p}} \leq c \left( \left\lVert \omega\right\rVert_{W^{r,p}} + \left\lVert f\right\rVert_{W^{r,p}} \right) \quad \text{ and } \quad 
 \left\lVert p \right\rVert_{W^{r+1,p}} \leq c \left\lVert f\right\rVert_{W^{r,p}}, 
\end{align*}
respectively,  
\begin{align*}
 \left\lVert \omega \right\rVert_{C^{r+2,\gamma}} \leq c \left( \left\lVert \omega \right\rVert_{C^{r,\gamma}} + \left\lVert f \right\rVert_{C^{r,\gamma}} \right) 
 \quad \text{ and } \quad \left\lVert p \right\rVert_{C^{r+1,\gamma}} \leq c\left\lVert f \right\rVert_{C^{r,\gamma}}.
\end{align*}
 \end{theorem}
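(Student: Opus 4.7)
The plan is to reduce \eqref{bvp stokes normal} to the second-order Maxwell-type system \eqref{bvp maxwell delta 0 full normal} already handled by Theorem \ref{maxwell linear regularity full normal}, via the Hodge-Morrey decomposition with normal boundary conditions, exactly as hinted in the paragraph preceding the statement. Concretely, I would write the datum as $f = dp + \tilde{f}$, where $\tilde{f}$ is a $k$-form satisfying $\delta\tilde{f}=0$ in $\Omega$ and $\nu\lrcorner\tilde{f}=0$ on $\partial\Omega$, and $p$ is a $(k-1)$-form one derivative more regular than $f$. Such a decomposition is standard (see, e.g., Theorem 6.21 in \cite{CsatoDacKneuss}): the $L^2$-orthogonal complement of the subspace of forms satisfying the two conditions on $\tilde f$ is the range of $d$, and elliptic regularity theory for the associated scalar potential problem yields the estimate $\lVert \tilde f\rVert_{W^{r,p}} + \lVert p\rVert_{W^{r+1,p}} \leq c\lVert f\rVert_{W^{r,p}}$ together with its H\"older analogue.

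With this decomposition in hand, I would apply Theorem \ref{maxwell linear regularity full normal} to the datum $\tilde f$ with the same $\lambda\notin\sigma$: the conditions $\delta\tilde f=0$ in $\Omega$ and $\nu\lrcorner\tilde f=0$ on $\partial\Omega$ are precisely the compatibility hypotheses required there. This produces $\omega$ of the desired Sobolev or H\"older class satisfying $\delta(A(x)d\omega) = \lambda\omega + \tilde f$, $\delta\omega=0$, $\nu\lrcorner\omega=0$, and $\nu\lrcorner(Ad\omega)=0$, together with the Maxwell estimate $\lVert\omega\rVert_{W^{r+2,p}} \leq c(\lVert\omega\rVert_{W^{r,p}} + \lVert\tilde f\rVert_{W^{r,p}})$. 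Adding $dp$ to both sides of the interior equation gives $\delta(A\,d\omega)+dp = \lambda\omega + \tilde f + dp = \lambda\omega + f$, so $(\omega,p)$ solves \eqref{bvp stokes normal}; chaining the two bounds produces the advertised estimates.

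For uniqueness, given two solutions $(\omega_1,p_1)$ and $(\omega_2,p_2)$, set $(\bar\omega,\bar p) = (\omega_1-\omega_2,p_1-p_2)$. Testing the resulting homogeneous equation against test forms $\psi\in W^{1,2}_N$ with $\delta\psi=0$ kills the $d\bar p$-contribution after integration by parts (using $\nu\lrcorner\psi=0$ and $\delta\psi=0$) as well as the boundary terms (using $\nu\lrcorner\bar\omega=0$ and $\nu\lrcorner(Ad\bar\omega)=0$), displaying $\bar\omega$ as a weak eigenform for the Maxwell operator at eigenvalue $\lambda$; since $\lambda\notin\sigma$, this forces $\bar\omega=0$, and then $d\bar p = 0$, so that $\bar p$ is closed and uniqueness of $\bar p$ reduces to the normalization implicit in the Hodge-Morrey decomposition used to produce $p$. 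The main technical point I expect will be exactly this last issue: ensuring that the pressure $p$ can be selected to inherit precisely one extra order of regularity from $f$, uniformly in $r$ and across both scales, and that the pressure gauge can be fixed so that $(\omega,p)$ is genuinely unique rather than only unique up to harmonic fields. Once the decomposition step is secured, everything else is a mechanical application of Theorem \ref{maxwell linear regularity full normal} together with bookkeeping of the two estimates.
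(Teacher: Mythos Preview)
Your proposal is correct and follows essentially the same route as the paper: decompose $f$ via the Hodge--Morrey decomposition with vanishing normal component into an exact piece $dp$ and a piece $\tilde f$ with $\delta\tilde f=0$ in $\Omega$ and $\nu\lrcorner\tilde f=0$ on $\partial\Omega$, then feed $\tilde f$ into Theorem~\ref{maxwell linear regularity full normal}. Your identification of the pressure gauge/uniqueness as the only genuine technical point is also apt, and is exactly the subtlety the paper leaves to the remarks following the theorem.
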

 \begin{remark}
 Considerations similar to remark \ref{lambda0maxwellnormal} apply here as well. 
\end{remark}
\begin{remark}
For $k=1$ and $n=3,$ the above two problems reduce to 
\begin{align*}
  \operatorname*{curl} ( A (x) \operatorname*{curl}u ) + \nabla p  &= \lambda u + f   \text{ in } \Omega, \notag \\
                \operatorname*{div} u &= 0 \text{ in } \Omega,  \notag \\
 \text{ and either } \quad \left. \begin{gathered}
                \nu\times u = 0 ,  \\
                p = p_{0},                
                \end{gathered} 
                \right. \qquad &\text{ or } \qquad \left. \begin{gathered}
                \nu\cdot u = 0 , \\
                \nu \times \left( A(x) \operatorname*{curl}u \right)  = 0,
                \end{gathered} 
                \right. \qquad \text{  on } \partial\Omega.
\end{align*}
These are usually called the \emph{vorticity-velocity-pressure formulation of the incompressible stationary Stokes problem}. See, for example, 
\cite{BeiraoBerselliNavierStokesstressfreebc}, \cite{ConcaPironneauNavierStokes}, \cite{DuboisStokes}. 
\end{remark}
\subsection{A non-elliptic Dirichlet problem}\label{maxwell dirichlet}
\begin{theorem}\label{dirichlet problem arbitrary boundary data}
Let $1 \leq k \leq n-1,$ $r \geq 0$ be integers and $ 0 < \gamma < 1$ and $1 < p < \infty$ be real numbers. 
Let $\Omega \subset \mathbb{R}^n$ be an open, bounded $C^{r+2},$ respectively $C^{r+2, \gamma},$ set.
 Let $A \in C^{r+1}(\overline{\Omega} ; L(\Lambda^{k+1},\Lambda^{k+1}))$, respectively 
 $C^{r+1, \gamma}(\overline{\Omega} ; L(\Lambda^{k+1},\Lambda^{k+1})),$ satisfy either the Legendre condition or satisfies the Legendre-Hadamard condition and has constant 
 coefficients. 
Then for any $\omega_{0} \in W^{r+2,p}\left(\Omega, \Lambda^{k}\right),$ 
respectively $C^{r+2,\gamma}(\overline{\Omega}, \Lambda^{k}),$ and for 
any $f \in W^{r,p}\left(\Omega, \Lambda^{k}\right)\cap \left( \mathcal{H}_{T}\left(\Omega, \Lambda^{k}\right) \right)^{\perp}, $ 
respectively $C^{r,\gamma}(\overline{\Omega}, \Lambda^{k})\cap \left( \mathcal{H}_{T}\left(\Omega, \Lambda^{k}\right) \right)^{\perp},$ 
such that $\delta f = 0$ in the sense of distributions, there exists a solution 
$\omega \in W^{r+p,2}\left(\Omega, \Lambda^{k}\right),$ respectively $C^{r+2,\gamma}(\overline{\Omega}, \Lambda^{k}),$ to the following
boundary value problem, 
 \begin{equation}\label{bvp maxwell dirichlet}
 \left\lbrace \begin{gathered}
                \delta ( A (x) d\omega )  = f   \text{ in } \Omega, \\
                \omega = \omega_{0}  \text{  on } \partial\Omega,
                \end{gathered} 
                \right. \tag{$\mathcal{P}_{D}$}
\end{equation}
which satisfies the estimate 
\begin{align*}
 \left\lVert \omega \right\rVert_{W^{r+2,p}} \leq c \left(  \left\lVert f\right\rVert_{W^{r,p}}  + \left\lVert \omega_{0}\right\rVert_{W^{r+2,p}}\right), 
\end{align*}
respectively,  
\begin{align*}
 \left\lVert \omega \right\rVert_{C^{r+2,\gamma}} \leq c \left( \left\lVert f \right\rVert_{C^{r,\gamma}} + \left\lVert \omega_{0} \right\rVert_{C^{r+2,\gamma}} \right).
\end{align*}
\end{theorem}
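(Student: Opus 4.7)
The approach is to reduce the non-elliptic Dirichlet problem \eqref{bvp maxwell dirichlet} to the tangential Maxwell-type system of Theorem \ref{maxwell linear regularity full}, exploiting the gauge invariance of $\omega\mapsto\delta(A\,d\omega)$ under addition of exact forms $d\phi$, which is automatic from $d\circ d=0$.

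First I would homogenize by setting $\omega = \omega_0 + u$, so the problem becomes: find $u$ vanishing on $\partial\Omega$ with $\delta(A\,du) = g := f - \delta(A\,d\omega_0)$ in $\Omega$. The hypotheses transfer to $g$: one has $\delta g = 0$ from $\delta^2 = 0$ together with $\delta f = 0$, while $g\perp\mathcal{H}_T$ follows from the integration by parts
\[
\int_{\Omega}\langle\delta(A\,d\omega_0),h\rangle = \int_{\Omega}\langle A\,d\omega_0,dh\rangle - \int_{\partial\Omega}\langle A\,d\omega_0,\nu\wedge h\rangle = 0
\]
for every $h\in\mathcal{H}_T$, using $dh=0$ and $\nu\wedge h=0$. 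Then I apply Theorem \ref{maxwell linear regularity full} at $\lambda=0$, valid by Remark \ref{lambda0maxwelltangential} in both ellipticity scenarios (Legendre, or constant-coefficient Legendre-Hadamard), producing $\widehat{u}$ of the stated regularity with $\delta(A\,d\widehat{u}) = g$, $\delta\widehat{u} = 0$ in $\Omega$, and $\nu\wedge\widehat{u} = 0$ on $\partial\Omega$.

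To correct the remaining normal part of the trace, observe that $\nu\wedge\widehat{u} = 0$ means, in local coordinates $(x',t)$ flattening the boundary with $\nu = dt$, that $\widehat{u}|_{t=0} = dt\wedge\widehat{u}^{(1)}|_{t=0}$ for some $(k-1)$-form $\widehat{u}^{(1)}$ not involving $dt$. By trace lifting I construct $\phi\in W^{r+3,p}(\Omega;\Lambda^{k-1})$, respectively $C^{r+3,\gamma}(\overline{\Omega};\Lambda^{k-1})$, with $\phi|_{\partial\Omega} = 0$ and $\partial_t\phi|_{\partial\Omega} = -\widehat{u}^{(1)}|_{\partial\Omega}$; a direct computation in the chart then yields $d\phi|_{\partial\Omega} = -\widehat{u}|_{\partial\Omega}$. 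Setting $u := \widehat{u} + d\phi$ and $\omega := \omega_0 + u$ gives the desired solution, since the interior equation is preserved by $d\circ d=0$ and $u$ vanishes on $\partial\Omega$ by construction. The stated estimates assemble from the bounds of Theorem \ref{maxwell linear regularity full} together with the trace-lifting inequality for $\phi$.

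The hard part will be this third step, the construction of the corrector. What makes it go through is that the Maxwell tangential boundary condition $\nu\wedge\widehat{u} = 0$ reduces the full Dirichlet matching to a scalar-valued Cauchy-data problem on $\partial\Omega$ (prescribing both value and normal derivative of $\phi$), which trace theory handles with the required Sobolev or H\"{o}lder regularity. The orthogonality hypothesis $f\perp\mathcal{H}_T$ plays its essential role one step earlier, in guaranteeing solvability of the Maxwell problem at $\lambda=0$ that produces $\widehat{u}$.
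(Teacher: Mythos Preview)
Your proof is correct and follows essentially the same route as the paper: reduce to the tangential Maxwell problem \eqref{bvp maxwell delta 0 full} with $\lambda=0$ (verifying the compatibility conditions on $g=f-\delta(A\,d\omega_0)$ exactly as you do), then correct the boundary trace by adding an exact form. The only difference is cosmetic: where you sketch the corrector $\phi$ via local trace lifting, the paper simply invokes Lemma~8.11 of \cite{CsatoDacKneuss}, which packages precisely the construction ``$\nu\wedge\widehat{u}=0$ on $\partial\Omega$ $\Rightarrow$ there exists $v$ with $dv=-\widehat{u}$ on $\partial\Omega$'' together with the globalization and the regularity bookkeeping that your local argument leaves implicit.
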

\begin{remark} (i) Note that there is no uniqueness and the claimed estimate is not an apriori estimate. The theorem only says that it is possible to find a solution which enjoys the claimed regularity
 and satisfies these 
 estimates. All solutions of the equation need neither satisfy such estimates nor have the claimed regularity. Indeed, for any 
 $\beta \in W_{0}^{1,2}\left(\Omega, \Lambda^{k-1}\right),$ adding $d\beta$ to any distributional solution of \eqref{bvp maxwell dirichlet} yields another distributional solution 
  which can be merely $L^{2}.$ \smallskip
 
 (ii) Once again if $\mathcal{H}_{T}\left(\Omega, \Lambda^{k}\right) = \lbrace 0 \rbrace,$ $f$ has no restrictions other than $\delta f = 0$ and in particular for 
 contractible domains, this happens for all $k$. 
 \end{remark}
\begin{proof}
 The hypothesis on $A$ implies that by theorem \ref{bvp maxwell delta 0 full}, we can find 
 an unique solution $\overline{\omega},$ with the expected regularity, to the system 
 \begin{equation*}
 \left\lbrace \begin{gathered}
                \delta ( A (x) d\overline{\omega} )   = f  - \delta  (  A (x) d\omega_{0} )  \text{ in } \Omega, \\
                \delta \overline{\omega} = 0 \text{ in } \Omega, \\
                \nu\wedge \overline{\omega} = 0 \text{  on } \partial\Omega,
                \end{gathered} 
                \right. 
\end{equation*}
Now since $\nu\wedge (- \overline{\omega}) = 0$ on $\partial\Omega$, we can find $v \in W^{r+3,2}(\Omega, \Lambda^{k-1})$ (see lemma 8.11 in \cite{CsatoDacKneuss})
 such that $dv = - \overline{\omega}$ on $\partial\Omega$. Setting $\omega = \omega_{0}+\overline{\omega} +dv$, we see that $\omega$ solves \eqref{bvp maxwell dirichlet}. 
\end{proof}

\subsection{First order div-curl type systems and the Gaffney inequalities}
\begin{theorem}\label{linear system assymmetric d-delta}
 Let $1 \leq k \leq n-1$  and $r \geq 0$ be integers and $ 0 <  \gamma < 1$ and $1 < p < \infty$ be real numbers. Let $\Omega \subset \mathbb{R}^n$ be 
 an open, bounded $C^{r+2},$ respectively $C^{r+2, \gamma},$ set. 
 
 \noindent Let  $A,B \in C^{r+1}(\overline{\Omega} ; L(\Lambda^{k},\Lambda^{k})),$ respectively $C^{r+1,\gamma} (\overline{\Omega} ; L(\Lambda^{k},\Lambda^{k})),$  satisfy the 
 Legendre condition. Let $\omega_{0} \in W^{r+1,p}(\Omega, \Lambda^{k})$, respectively $C^{r+1,\gamma}(\overline{\Omega} ; \Lambda^{k}),$ 
 $f \in W^{r,p}(\Omega, \Lambda^{k+1}),$ respectively $C^{r,\gamma}(\overline{\Omega} ; \Lambda^{k+1})$ and $g \in W^{r,p}(\Omega, \Lambda^{k-1}),$
  respectively $C^{r,\gamma}(\overline{\Omega} ; \Lambda^{k-1}).$
 Then the following hold true.\smallskip
 
\noindent \textbf{(i)} Suppose $f$ and $g$  satisfy $df = 0$, $\delta g = 0$ in $\Omega$ and 
 $ \nu\wedge d\omega_{0} = \nu\wedge f $ on $\partial\Omega,$
  and for every $\chi \in \mathcal{H}_T(\Omega;\Lambda^{k+1})$ and $\psi \in \mathcal{H}_T(\Omega;\Lambda^{k-1})$,
         \begin{equation*}
         \int_{\Omega} \langle f ; \chi \rangle - \int_{\partial\Omega} \langle \nu \wedge \omega_0 ; \chi \rangle = 0 
         \text{ and } \int_{\Omega} \langle g ; \psi \rangle = 0. 
         \end{equation*}
 Then there exists a solution $\omega \in W^{ r+1, p}(\Omega, \Lambda^{k}),$ respectively $C^{r+1,\gamma}(\overline{\Omega} ; \Lambda^{k}),$ to the following boundary value problem, 
 \begin{equation} \label{problemddeltalinear}
   \left\lbrace \begin{aligned}
                d(A(x)\omega) = f  \quad &\text{and} \quad  \delta (B(x) \omega) = g &&\text{ in } \Omega, \\
                \nu\wedge A(x)\omega &= \nu\wedge\omega_0 &&\text{  on } \partial\Omega,
                \end{aligned} 
                \right. \tag{$\mathcal{P}_{T}$}
                \end{equation}
satisfying the estimates 
\begin{align*}
 \left\lVert \omega \right\rVert_{W^{r+1,p}} \leq c \left(  \left\lVert  \omega \right\rVert_{L^{p}} + \left\lVert f\right\rVert_{W^{r,p}}  + \left\lVert g\right\rVert_{W^{r,p}} + \left\lVert \omega_{0}\right\rVert_{W^{r+1,p}}\right), 
\end{align*}
respectively,  
\begin{align*}
 \left\lVert \omega \right\rVert_{C^{r+1,\gamma}} \leq c \left( \left\lVert  \omega \right\rVert_{C^{0,\gamma}} + \left\lVert f \right\rVert_{C^{r,\gamma}} + \left\lVert g \right\rVert_{C^{r,\gamma}} 
 + \left\lVert \omega_{0} \right\rVert_{C^{r+1,\gamma}} \right).
\end{align*}
Furthermore, these estimates are apriori estimates, i.e any weak solution $\omega \in L^{p}(\Omega, \Lambda^{k}),$ respectively $C^{0,\gamma}(\overline{\Omega} ; \Lambda^{k}),$
satisfying \eqref{problemddeltalinear} in the sense of distributions with the assumed conditions on $f,$ $g$ and $\omega_{0}$ are actually $W^{ r+1, p}(\Omega, \Lambda^{k}),$ respectively $C^{r+1,\gamma}(\overline{\Omega} ; \Lambda^{k}),$
 and the respective estimates hold.\smallskip
                
\noindent\textbf{(ii)} Suppose $f$ and $g$  satisfy $df = 0$, $\delta g = 0$ in $\Omega$ and $ \nu\lrcorner g  = \nu\lrcorner \delta\omega_{0}$ on $\partial\Omega,  $
and for every $\chi \in \mathcal{H}_N(\Omega;\Lambda^{k-1})$ and $\psi \in \mathcal{H}_N(\Omega;\Lambda^{k+1})$,
         \begin{equation*}
         \int_{\Omega} \langle g ; \chi \rangle - \int_{\partial\Omega} \langle \nu \lrcorner \omega_0 ; \chi \rangle = 0 
         \text{ and } \int_{\Omega} \langle f ; \psi \rangle = 0. 
         \end{equation*}
Then there exists a solution $\omega \in W^{ r+1, p}(\Omega, \Lambda^{k}),$ respectively $C^{r+1,\gamma}(\overline{\Omega} ; \Lambda^{k}),$ to the following boundary 
          value problem,
\begin{equation} \label{problemddeltalinearnormal}
   \left\lbrace \begin{aligned}
                d(A(x)\omega) = f  \quad &\text{and} \quad  \delta (B(x) \omega) = g &&\text{ in } \Omega, \\
                \nu\lrcorner B(x)\omega &= \nu\lrcorner\omega_0 &&\text{  on } \partial\Omega,
                \end{aligned} 
                \right. \tag{$\mathcal{P}_{N}$}
                \end{equation}
                satisfying the estimates 
\begin{align*}
 \left\lVert \omega \right\rVert_{W^{r+1,p}} \leq c \left( \left\lVert  \omega \right\rVert_{L^{p}} +  \left\lVert f\right\rVert_{W^{r,p}}  + \left\lVert g\right\rVert_{W^{r,p}} + \left\lVert \omega_{0}\right\rVert_{W^{r+1,p}}\right), 
\end{align*}
respectively,  
\begin{align*}
 \left\lVert \omega \right\rVert_{C^{r+1,\gamma}} \leq c \left( \left\lVert  \omega \right\rVert_{C^{0,\gamma}} + \left\lVert f \right\rVert_{C^{r,\gamma}} + \left\lVert g \right\rVert_{C^{r,\gamma}} 
 + \left\lVert \omega_{0} \right\rVert_{C^{r+1,\gamma}} \right).
\end{align*}
Furthermore, these estimates are apriori estimates, i.e any weak solution $\omega \in L^{p}(\Omega, \Lambda^{k}),$ respectively $C^{0,\gamma}(\overline{\Omega} ; \Lambda^{k}),$
satisfying \eqref{problemddeltalinearnormal} in the sense of distributions with the conditions on $f,$ $g$ and $\omega_{0}$ are actually $W^{ r+1, p}(\Omega, \Lambda^{k}),$ respectively $C^{r+1,\gamma}(\overline{\Omega} ; \Lambda^{k}),$
 and the respective estimates hold.
\end{theorem}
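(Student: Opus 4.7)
The plan is to reduce each first-order div-curl system to a second-order Hodge/Maxwell-type elliptic boundary value problem already handled in Sections~3 and 4.1--4.4, and to obtain the apriori estimate by bootstrap through the classical Hodge regularity for a $k$-form derived from $\omega$. I describe the approach for part~(i); part~(ii) follows by interchanging $d\leftrightarrow\delta$, tangential $\leftrightarrow$ normal, $A\leftrightarrow B$, and using Theorem~\ref{maxwell linear regularity full normal} in place of Theorem~\ref{maxwell linear regularity full}. First, under $df=0$, $\nu\wedge f = \nu\wedge d\omega_0$ on $\partial\Omega$, and the integral orthogonality with $\mathcal{H}_T(\Omega;\Lambda^{k+1})$, the classical existence theorem for $du=f$ with prescribed tangential part (as in Csato-Dacorogna-Kneuss) produces $u_a \in W^{r+1,p}(\Omega;\Lambda^k)$ (respectively $C^{r+1,\gamma}$) with $du_a = f$ and $\nu\wedge u_a = \nu\wedge\omega_0$; setting $\omega_a := A^{-1}u_a$ gives a particular solution with the correct $d$-equation and tangential trace, reducing the problem to finding a correction $\phi$ with $d(A\phi) = 0$, $\nu\wedge A\phi = 0$, and $\delta(B\phi) = \tilde g := g - \delta(B\omega_a)$, where $\tilde g$ still satisfies $\delta \tilde g = 0$ and $\tilde g \perp \mathcal{H}_T(\Omega;\Lambda^{k-1})$.

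Next, I recover $\phi$ by a potential ansatz. Setting $\psi := A\phi$, the form $\psi$ is closed with vanishing tangential trace, so Hodge--Morrey--Friedrichs writes $\psi = d\alpha + h$ with $\alpha \in W_T^{1,2}(\Omega;\Lambda^{k-1})$ gauge-fixed by $\delta\alpha = 0$ and $h \in \mathcal{H}_T(\Omega;\Lambda^k)$. Substitution reduces to the second-order Maxwell-type boundary value problem
\begin{equation*}
\delta\bigl(BA^{-1}\,d\alpha\bigr) = \tilde g - \delta\bigl(BA^{-1}h\bigr) \text{ in }\Omega,\qquad \delta\alpha = 0 \text{ in }\Omega,\qquad \nu\wedge\alpha = 0 \text{ on }\partial\Omega,
\end{equation*}
which, after an algebraic rearrangement so that its principal part has an elliptic coefficient, is handled by Theorem~\ref{maxwell linear regularity full} together with Theorem~\ref{dirichlet problem arbitrary boundary data}. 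The harmonic contribution $h$ is fixed by a finite-dimensional solvability condition that is met precisely because $\tilde g \perp \mathcal{H}_T(\Omega;\Lambda^{k-1})$.

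The apriori regularity for any weak distributional solution $\omega \in L^p$ (respectively $C^{0,\gamma}$) is obtained by bootstrap. From the algebraic identity $\delta(A\omega) = [\delta,\,AB^{-1}](B\omega) + AB^{-1}\,\delta(B\omega)$, where $[\delta,AB^{-1}]$ is a zeroth-order operator in the components of $A$, $B$ and their first derivatives, one controls both $d(A\omega) = f \in W^{r,p}$ and $\delta(A\omega) \in L^p$, so the classical Hodge regularity applied to the $k$-form $A\omega$ (with controlled $d$, $\delta$, and tangential trace) yields $A\omega \in W^{1,p}$; since the hypotheses on $A,B,\omega_0,f,g$ are stable under iteration, this upgrades $\omega$ step by step to $W^{r+1,p}$ (respectively $C^{r+1,\gamma}$) with the stated constants. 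The same bootstrap, applied to the constructed $\omega = \omega_a + \phi$ of the existence part, supplies the existence estimate.

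The principal obstacle is that the product $BA^{-1}$, which appears as the coefficient of the second-order problem in the correction step, need not inherit a Legendre-Hadamard condition from the Legendre conditions on $A$ and $B$ individually. This is circumvented by rewriting $\delta(BA^{-1}\,d\alpha) = \delta\bigl(B\,d(A^{-1}\alpha)\bigr) + (\text{lower-order perturbation involving }\nabla A)$, whose principal part has the elliptic coefficient $B$ and whose lower-order remainder is controlled and absorbed via the $\|\omega\|_{L^p}$ term on the right-hand side, using the $L^p$ estimates of Theorem~\ref{lpestimatesconstantcoeff}. Carrying out this rearrangement rigorously, and verifying that the finite-dimensional compatibility conditions at each stage reduce precisely to the orthogonality hypotheses stated in the theorem, is where the main technical care is required.
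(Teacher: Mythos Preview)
Your overall strategy for existence matches the paper's: solve $dF=f$ in $\Omega$ with $F=\omega_0$ on $\partial\Omega$ (Theorem 8.16 in \cite{CsatoDacKneuss}), then find a potential $\alpha$ with $\delta\alpha=0$, $\nu\wedge\alpha=0$ solving the second-order system $\delta(BA^{-1}d\alpha)=g-\delta(BA^{-1}F)$, and set $\omega=A^{-1}(d\alpha+F)$. The paper simply takes the harmonic piece $h=0$; since only \emph{one} solution is being constructed, your Hodge--Morrey--Friedrichs decomposition of $\psi=A\phi$ with a free $h\in\mathcal{H}_T$ is an unnecessary detour.

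You are right to flag that $BA^{-1}$ need not inherit a Legendre or even Legendre--Hadamard condition from $A$ and $B$ separately, and the paper does not address this point explicitly. However, your proposed remedy does not close the gap. Rewriting $\delta(BA^{-1}d\alpha)=\delta\bigl(B\,d(A^{-1}\alpha)\bigr)+\text{(lower order)}$ moves the principal part onto $\beta:=A^{-1}\alpha$, but the boundary condition $\nu\wedge\alpha=0$ becomes $\nu\wedge(A\beta)=0$, which is \emph{not} the tangential condition compatible with the operator $\delta(B\,d\beta)$; Theorem~\ref{maxwell linear regularity full} therefore does not apply to $\beta$. What you would actually need is a second-order theory with elliptic coefficient on $d$ and a matrix weight in the boundary trace---precisely the content of Section~5, which is built \emph{on} the present theorem and so cannot be invoked here without circularity.

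Your bootstrap for the apriori estimate, via the commutator identity $\delta(A\omega)=AB^{-1}\,\delta(B\omega)+[\delta,AB^{-1}](B\omega)$ and classical Gaffney regularity applied to $A\omega$ (whose $d$, $\delta$, and tangential trace are all controlled), is correct and fleshes out the paper's one-line ``the argument is essentially reversible''. This part of your proposal stands on its own and in fact gives a route to existence as well, by combining the apriori estimate with a continuity argument in the coefficients, thereby avoiding the $BA^{-1}$ ellipticity issue altogether.
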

\begin{remark}\label{ddeltauniquenessremark}(i) When $r=0,$ the conditions $df = 0$, $\delta g = 0$ are understood in the sense of distributions. The conditions 
$\nu\wedge d\omega_{0} = \nu\wedge f \text{ on } \partial\Omega $  and $ \nu\lrcorner \delta\omega_{0} = \nu\lrcorner g 
\text{ on } \partial\Omega $ are well defined in the H\"{o}lder case and are to be interpreted in the weak sense in the Sobolev case 
(see remark 7.3(iii) in \cite{CsatoDacKneuss}). 

(ii) The conditions on $f,g$ and $\omega_{0}$ regarding the harmonic fields are automatically satisfied if there are no such harmonic field. In particular, 
for contractible domains, those conditions present no restriction for both the problems for any $k.$  

(iii) Note that although the estimates are apriori estimates, the solution need not be unique in general and thus, unlike the theorems in Section 
\ref{hodge A}. \ref{maxwell A} and \ref{stokes}, the norm of $\omega$ can not be dropped from the right hand side of the estimates. In fact, for every nontrivial 
$h \in  \mathcal{H}_T(\Omega;\Lambda^{k}),$ $\omega = A^{-1}\left( d\alpha + h \right)$ is a nontrivial solution of  \eqref{problemddeltalinear} with zero data,  where 
$\alpha$ is a solution to 
\begin{align*}
 \left\lbrace \begin{aligned}\delta ( BA^{-1} d \alpha ) &=  - \delta ( BA^{-1}h ) &&\text{ in } \Omega, \\
 \delta \alpha &= 0 &&\text{ in } \Omega, \\
 \nu \wedge \alpha &= 0 &&\text{ on  } \partial \Omega.\end{aligned}\right. \end{align*}
 Conversely, every non-trivial solution to \eqref{problemddeltalinear} with zero data is of this form and thus \eqref{problemddeltalinear} has a unique solution if 
 $\mathcal{H}_T(\Omega;\Lambda^{k}) = \lbrace 0 \rbrace. $     
Similarly, for every nontrivial 
$h \in  \mathcal{H}_N(\Omega;\Lambda^{k}),$ $\omega = B^{-1}\left( \delta\alpha + h \right)$ is a nontrivial solution of  \eqref{problemddeltalinearnormal} with zero data,  where 
$\alpha$ is a solution to 
\begin{align*}
 \left\lbrace \begin{aligned} d ( AB^{-1} \delta \alpha ) &=  - d ( AB^{-1}h ) &&\text{ in } \Omega, \\
 d \alpha &= 0 &&\text{ in } \Omega, \\
 \nu \lrcorner \alpha &= 0 &&\text{ on  } \partial \Omega.\end{aligned}\right. \end{align*} Also, every nontrivial solution to \eqref{problemddeltalinearnormal} with zero 
 data is of this form and thus if  $\mathcal{H}_N(\Omega;\Lambda^{k}) = \lbrace 0 \rbrace, $  \eqref{problemddeltalinearnormal} has a unique solution. In particular, 
 for contractible domains, we have uniqueness of solutions for both \eqref{problemddeltalinear} and \eqref{problemddeltalinearnormal} for any $k.$
\end{remark}
\begin{proof}
We prove only part (i) and the Sobolev case and assume that $2 \leq k \leq n-1.$ The H\"{o}lder case is similar and the case $k=1$ is much easier. Part (ii) follows analogously using the dual versions.  
The hypotheses on $f$ imply (see Theorem 8.16 in \cite{CsatoDacKneuss}) that
 there exists $F \in W^{r+1,p}(\Omega, \Lambda^{k})$ such that 
\begin{align*}
 \left\lbrace \begin{aligned}
               dF &= f  \qquad \text{ in } \Omega ,\\
 F &= \omega_{0} \qquad \text{ on } \partial\Omega .
              \end{aligned}\right.\end{align*}
Note that solvability of this problem is a consequence of the Hodge-Morrey decomposition which in turn follows from the results in Section \ref{hodge A}. 
Now, we find a solution $\alpha \in W^{r+2,p}(\Omega, \Lambda^{k-1})$ such that 
 \begin{align*}
 \left\lbrace \begin{aligned}\delta ( BA^{-1} d \alpha ) &= g - \delta ( BA^{-1} F ) &&\text{ in } \Omega, \\
 \delta \alpha &= 0 &&\text{ in } \Omega, \\
 \nu \wedge \alpha &= 0 &&\text{ on  } \partial \Omega.\end{aligned}\right. \end{align*}
Now setting $$ \omega = A^{-1} (d\alpha + F),$$ we easily verify $\omega$ solves \eqref{problemddeltalinear}.
The estimates are actually apriori estimates, since the argument is essentially reversible.  \end{proof}

\paragraph*{} With the help of the previous results, we can deduce new Gaffney type inequalities (see \cite{FriedrichsGaffney}, \cite{GaffneyHarmonicoperator}, \cite{GaffneyHarmonicintegrals}). 
\begin{theorem}[Gaffney type inequality]\label{newgaffney}
Let $1 \leq k \leq n-1$ be an integer and 
$1 < p < \infty$ and $0 < \gamma < 1$ be real numbers. Let  $\Omega \subset \mathbb{R}^{n}$ be open, bounded and $C^{2},$ respectively $C^{2,\gamma}.$ Let $B \in C^{1}(\overline{\Omega}; L(\Lambda^{k};\Lambda^{k}),$ respectively 
$C^{1,\gamma}(\overline{\Omega}; L(\Lambda^{k};\Lambda^{k}),$  satisfy the Legendre condition.

\begin{itemize}
 \item[(i)]  Let $u \in L^{p}(\Omega;\Lambda^{k}),$ respectively $u \in C^{0,\gamma}(\overline{\Omega}; \Lambda^{k}),$
$ du \in L^{p}(\Omega;\Lambda^{k+1}),$ respectively $du \in C^{0,\gamma}(\overline{\Omega}; \Lambda^{k+1}),$ $\delta(Bu) 
\in L^{p}(\Omega;\Lambda^{k-1}),$ respectively $\delta(Bu) 
\in C^{0,\gamma}(\overline{\Omega}; \Lambda^{k-1}).$ Suppose either $\nu\wedge u =0$ on $\partial\Omega$ or $\nu\lrcorner \left(  B(x)u \right) = 0 $ on $\partial\Omega.$
Then $u \in W^{1,p}(\Omega; \Lambda^{k}), $ respectively $u \in C^{1,\gamma}(\overline{\Omega}; \Lambda^{k})$ and there exists a constant 
$C_{p} = C( \Omega, B, p ) > 0,$ respectively $C_{\gamma} = C( \Omega, B, \gamma ) > 0,$ such that 
\begin{equation*}
   \lVert  u \rVert_{W^{1,p}} \leq C_{p} \left( \lVert du \rVert_{L^{p}(\Omega;\Lambda^{k+1})}  
 + \lVert \delta (Bu) \rVert_{L^{p}(\Omega;\Lambda^{k-1})} + \lVert  u \rVert_{L^{p}(\Omega;\Lambda^{k})}  \right) ,
  \end{equation*}
and respectively
\begin{equation*}
   \lVert u \rVert_{C^{1,\gamma}} \leq C_{\gamma} \left( \lVert du \rVert_{C^{0,\gamma}(\overline{\Omega};\Lambda^{k+1})}  
 + \lVert \delta (Bu) \rVert_{C^{0,\gamma}(\overline{\Omega};\Lambda^{k-1})} + \lVert  u \rVert_{C^{0,\gamma}(\overline{\Omega};\Lambda^{k})}  \right) .
  \end{equation*}
  \item[(ii)] Let $u \in L^{p}(\Omega;\Lambda^{k}),$ respectively $u \in C^{0,\gamma}(\overline{\Omega}; \Lambda^{k}),$
$ d\left( Bu \right) \in L^{p}(\Omega;\Lambda^{k+1}),$ respectively $d \left( Bu \right) \in C^{0,\gamma}(\overline{\Omega}; \Lambda^{k+1}),$ $\delta u 
\in L^{p}(\Omega;\Lambda^{k-1}),$ respectively $\delta u 
\in C^{0,\gamma}(\overline{\Omega}; \Lambda^{k-1}).$ Suppose either $\nu\wedge \left( B(x) u \right) =0$ on $\partial\Omega$ or $\nu\lrcorner u  = 0 $ on $\partial\Omega.$
Then $u \in W^{1,p}(\Omega; \Lambda^{k}), $ respectively $u \in C^{1,\gamma}(\overline{\Omega}; \Lambda^{k})$ and there exists a constant 
$C_{p} = C( \Omega, B, \gamma_{0}, p ) > 0,$ respectively $C_{\gamma} = C( \Omega, B,  \gamma ) > 0,$ such that 
\begin{equation*}
   \lVert  u \rVert_{W^{1,p}} \leq C_{p} \left( \lVert d \left( B u \right) \rVert_{L^{p}(\Omega;\Lambda^{k+1})}  
 + \lVert \delta u \rVert_{L^{p}(\Omega;\Lambda^{k-1})} + \lVert  u \rVert_{L^{p}(\Omega;\Lambda^{k})}  \right) ,
  \end{equation*}
and respectively
\begin{equation*}
   \lVert u \rVert_{C^{1,\gamma}} \leq C_{\gamma} \left( \lVert d \left( B u \right) \rVert_{C^{0,\gamma}(\overline{\Omega};\Lambda^{k+1})}  
 + \lVert \delta u \rVert_{C^{0,\gamma}(\overline{\Omega};\Lambda^{k-1})} + \lVert  u \rVert_{C^{0,\gamma}(\overline{\Omega};\Lambda^{k})}  \right) .
  \end{equation*}
\end{itemize}
\end{theorem}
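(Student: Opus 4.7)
The strategy is to recognize that, in each of the four sub-cases, the form $u$ is itself a distributional solution of a first order div--curl system of the type treated in Theorem \ref{linear system assymmetric d-delta}, with zero Dirichlet datum; the desired Gaffney bound is then exactly the apriori estimate asserted in that theorem.

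For part (i) with $\nu\wedge u=0$, I would apply Theorem \ref{linear system assymmetric d-delta}(i) with coefficient matrices $A\equiv\mathbf{I}$ and $B$ (as given), source data $f:=du$ and $g:=\delta(Bu)$, and boundary datum $\omega_0:=0$. Then $u$ solves $d(Au)=f$, $\delta(Bu)=g$ with $\nu\wedge Au=0$ on $\partial\Omega$. The compatibilities $df=0$ and $\delta g=0$ are automatic from $d^2=0$ and $\delta^2=0$; the identity $\nu\wedge f=\nu\wedge d\omega_0=0$ on $\partial\Omega$ follows, in the weak sense, from $\nu\wedge u=0$ together with the commutation of $d$ with pullback to the boundary. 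For the harmonic field orthogonality, integration by parts gives, for $\chi\in\mathcal{H}_T(\Omega;\Lambda^{k+1})$ and $\psi\in\mathcal{H}_T(\Omega;\Lambda^{k-1})$,
\begin{align*}
\int_\Omega\langle du,\chi\rangle &= \int_\Omega\langle u,\delta\chi\rangle + \int_{\partial\Omega}\langle\nu\wedge u,\chi\rangle = 0,\\
\int_\Omega\langle\delta(Bu),\psi\rangle &= \int_\Omega\langle Bu,d\psi\rangle - \int_{\partial\Omega}\langle Bu,\nu\wedge\psi\rangle = 0,
\end{align*}
since $\delta\chi=d\psi=0$ and $\nu\wedge u=\nu\wedge\psi=0$. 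All hypotheses of Theorem \ref{linear system assymmetric d-delta}(i) are thus satisfied, and its apriori clause delivers exactly the claimed Gaffney bound.

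The remaining three sub-cases follow the same template with only cosmetic changes. For part (i) with $\nu\lrcorner(Bu)=0$, I would invoke Theorem \ref{linear system assymmetric d-delta}(ii) instead, still with $A\equiv\mathbf{I}$, $f=du$, $g=\delta(Bu)$, $\omega_0=0$; the weak identity $\nu\lrcorner g=0$ on $\partial\Omega$ follows by testing $\delta(Bu)$ against $d\phi$ and using $\nu\lrcorner(Bu)=0$, and the harmonic field orthogonality is verified as before using the adjoint boundary pairing $\langle\nu\wedge\alpha,\beta\rangle=\langle\alpha,\nu\lrcorner\beta\rangle$. For part (ii) the roles of $A$ and $B$ swap: I would set $A:=B$ (which by hypothesis satisfies the Legendre condition, the ellipticity that Theorem \ref{linear system assymmetric d-delta} requires of the leading coefficient), $B:=\mathbf{I}$, $f:=d(Bu)$, $g:=\delta u$, $\omega_0:=0$, and apply Theorem \ref{linear system assymmetric d-delta}(i) if $\nu\wedge(Bu)=0$ and Theorem \ref{linear system assymmetric d-delta}(ii) if $\nu\lrcorner u=0$.

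I do not expect any essential obstacle. The only subtle point is that $u$ is only assumed in $L^p$ (respectively $C^{0,\gamma}$) with one-sided regularity for $du$ and $\delta(Bu)$, so the boundary hypothesis and the compatibility conditions must be read in the weak trace sense (cf.\ Remark 7.3(iii) of \cite{CsatoDacKneuss}); under that interpretation the integrations by parts above remain legitimate, after which the apriori estimates built into Theorem \ref{linear system assymmetric d-delta} do the remaining work.
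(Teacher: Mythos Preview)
Your proposal is correct and is precisely the argument the paper has in mind: the paper does not spell out a proof of Theorem~\ref{newgaffney} but merely says it is deduced ``with the help of the previous results,'' i.e.\ from the apriori estimates of Theorem~\ref{linear system assymmetric d-delta}, exactly as you do. Your careful verification of the compatibility and harmonic-field orthogonality conditions (which the paper omits) is a welcome addition.
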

\begin{remark}\label{newgaffneycontractible}(i) Uniqueness considerations (see remark \ref{ddeltauniquenessremark}(iii)) imply that 
if $\mathcal{H}_{T}\left( \Omega; \Lambda^{k} \right) = \lbrace 0 \rbrace,$ then the term containing $u$ can be dropped in the estimates in part (i) 
 for the boundary condition $\nu\wedge u = 0$ on $\partial\Omega.$ Similarly, if $\mathcal{H}_{N}\left( \Omega; \Lambda^{k} \right) = \lbrace 0 \rbrace,$ then the same 
 can be done in the estimates in part (ii) 
 for the boundary condition $\nu\lrcorner u = 0$ on $\partial\Omega.$ 
 
 (ii) In particular for $p=2,$ if $\Omega$ is open, bounded, $C^{2}$ and contractible and $B \in C^{1}(\overline{\Omega}; L(\Lambda^{k};\Lambda^{k})$  satisfy 
 the Legendre condition, then for any $1 \leq k \leq n-1,$ we have the following inequalities:
 \begin{align}
 \lVert \nabla u \rVert_{L^{2}}^{2} &\leq C \left( \lVert du \rVert_{L^{2}}^{2}  
 + \lVert \delta (Bu) \rVert_{L^{2}}^{2}  \right)  &&\text{ for all } u \in W^{1,2}_{T}(\Omega;\Lambda^{k}),\label{newgaffneytangent}\\
 \intertext{ and }
  \lVert  \nabla u \rVert_{L^{2}}^{2} &\leq C \left( \lVert d \left( B u \right) \rVert_{L^{2}}^{2} + \lVert \delta u \rVert_{L^{2}}^{2}  \right) 
  &&\text{ for all } u \in W^{1,2}_{N}(\Omega;\Lambda^{k}). \label{newgaffneynormal}
 \end{align}
\end{remark}
We shall use these two inequalities crucially in the next section.

\section{Second order Hodge type systems revisited} 
With the help of the new Gaffney type inequalities deduced in the last section, it is now possible to show that the regularity results extend to the general systems as well. The regularity estimates follow essentially the same way 
as in Section \ref{boundary estimates}, the only difference being that we shall now use the new Gaffney type inequalities \eqref{newgaffneytangent}
and \eqref{newgaffneynormal} in place of the usual Gaffney inequality.  

\subsection{Regularity estimates in the tangential case}\label{regularitydiscssiongeneraltan}
As in section \ref{schauderandlp}, using lemma \ref{flattening and freezing coefficients}, proving the regularity estimates for system 
\eqref{bvp hodge elliptic system general full regularity} comes down to proving 
 the estimates for $u \in W_{T}^{1,2}(B_{R}^{+};\Lambda^{k})$ satisfying 
\begin{align*}
 \int_{B_{R}^{+}} \langle \bar{A}(du) ; d\psi \rangle + \int_{B_{R}^{+}} \langle \delta \left( \bar{B} u \right) ; \delta \left( \bar{B}\psi \right)  \rangle 
 +  \int_{B_{R}^{+}} \langle \mathcal{P} ;   \psi & \rangle 
  -\int_{B_{R}^{+}} \langle \mathcal{Q} ; \nabla\psi \rangle \notag \\&+ \int_{B_{R}^{+}} \langle \mathrm{S}\nabla u ; \nabla\psi \rangle= 0,
   \end{align*}
for all $\psi \in W_{T}^{1,2}(B_{R}^{+};\Lambda^{k}),$ where $\bar{A}$ satisfies the Legendre-Hadamard condition and $\bar{B}$ satisfies a Legendre condition, 
$\mathcal{P} = \widetilde{f} + \mathrm{P}u + \mathrm{R} \nabla u$  and $\mathcal{Q} = \widetilde{F}  -  \mathrm{Q}u ,$ with $\widetilde{f}, \widetilde{F}, \mathrm{P}, \mathrm{Q}, 
\mathrm{R}, \mathrm{S}$ are as in lemma \ref{flattening and freezing coefficients}, 
for $R>0$ suitably small. 

\par Note that by the G\aa{}rding inequality and the Gaffney type 
inequality \eqref{newgaffneytangent}, we have, for any $v \in W_{T}^{1,2}(B_{R}^{+};\Lambda^{k}),$ 
 $$\int_{B_{R}^{+}} \langle \bar{A}(dv) ; dv \rangle + \int_{B_{R}^{+}} \left\lvert \delta \left( \bar{B} v \right) \right\rvert^{2} 
 \geq c\left(\int_{B_{R}^{+}}  \lvert dv \rvert^{2} + \int_{B_{R}^{+}} \lvert \delta (\bar{B}v) \rvert^{2} \right) \geq c  \int_{B_{R}^{+}} \lvert \nabla v \rvert^{2} .$$ 
 Using this, the Caccioppoli inequality and all the regularity estimates carry over to this setting as soon as we show that the $L^{2}$ norm of the $D_{nn}$ derivative can be estimated from the equation. To show this, we 
 define the linear map $\widetilde{A}:\mathbb{R}^{\tbinom{n}{k}\times n} \rightarrow \mathbb{R}^{\tbinom{n}{k}\times n}$ by the pointwise algebraic identities
$$\langle \widetilde{A} \left( a_{1} \otimes b_{1} \right) ;  a_{2} \otimes b_{2} \rangle = \langle A\left( a_{1} \wedge b_{1} \right) ; a_{2} \wedge b_{2} \rangle 
+ \langle a_{1} \lrcorner B b_{1} ; a_{2} \lrcorner B b_{2}  \rangle,  $$    for every $ a_{1},a_{2} \in \Lambda^{1}$, viewed also as vectors in $\mathbb{R}^{n}$  and $b_{1}, b_{2} \in \Lambda^{k},$
viewed also as vectors in $\mathbb{R}^{\tbinom{n}{k}}$. Note that this implies, for every $u, \psi \in  W_{T, flat}^{1,2}(B_{R}^{+} ; \Lambda^{k}),$ we have, 
  \begin{align*}
   \int_{B_{R}^{+}} \langle A(du) ; d\psi \rangle + \int_{B_{R}^{+}} \langle \delta \left( B u \right) ; \delta \left( B \psi \right)  \rangle
   = \int_{B_{R}^{+}} \langle \widetilde{A}\nabla u ; \nabla \psi \rangle. 
  \end{align*}
We also define the maps $\widetilde{A}^{pq}:\mathbb{R}^{\tbinom{n}{k}} \rightarrow \mathbb{R}^{\tbinom{n}{k}} $ by \eqref{Apqdefinition} as before. Now we just need to show that $\widetilde{A}^{nn}$ is invertible. Indeed, by virtue of the identity  
 $ \xi = e_{n}\lrcorner \left( e_{n} \wedge \xi \right) + e_{n}\wedge \left(e_{n} \lrcorner \xi \right), $ we have, 
 for every $\xi \in \mathbb{R}^{\tbinom{n}{k}},$
 \begin{align*}
  \langle \widetilde{A}^{nn} \xi ; \xi \rangle &= \langle \widetilde{A} (e_{n} \otimes \xi) ; e_{n} \otimes \xi \rangle 
  = \langle A\left( e_{n} \wedge \xi \right) ; e_{n} \wedge \xi \rangle 
+ \langle e_{n} \lrcorner B\xi ; e_{n} \lrcorner B\xi \rangle \\&\geq \gamma_{A}\lvert e_{n} \wedge \xi \rvert^{2} 
+ \left\lvert e_{n} \lrcorner B\xi \right\rvert^{2} 
\geq c_{1} \left\lvert \xi \right\rvert^{2}, 
 \end{align*}
 where $c_{1}>0$ is the constant given by lemma \ref{ellipticitylemma}.
 
\subsection{Regularity estimates in the normal case}\label{regularitydiscssiongeneralnormal}
The case of the normal type boundary condition is slightly trickier. For a given $B \in C^{r+2}(\overline{\Omega} ; L(\Lambda^{k},\Lambda^{k}))$, respectively 
 $C^{r+2, \gamma}(\overline{\Omega} ; L(\Lambda^{k},\Lambda^{k})),$ satisfying the Legendre condition, let us define the space 
 $$ W^{1,2}_{B,N}\left(\Omega; \Lambda^{k} \right):= \left\lbrace \omega \in W^{1,2}\left(\Omega; \Lambda^{k} \right): \nu\lrcorner \left( B(x)\omega \right) = 0 \text{ on } 
 \partial\Omega \right\rbrace . $$ Now a weak solution $\omega \in W^{1,2}_{B,N}\left(\Omega; \Lambda^{k} \right)$ for 
 \eqref{bvp hodge elliptic system general full regularity normal} satisfies  
 \begin{multline}\label{weakformulationgeneralnormal}
 \int_{\Omega}  \langle A (x)d\omega , d\phi \rangle   + \int_{\Omega}\langle \delta \left( B(x) \omega \right) , \delta \left( B(x) \phi \right) \rangle 
  + \lambda  \int_{\Omega}\langle B(x)\omega , \phi \rangle   \\
  +\int_{\Omega} \langle  f, \phi \rangle -  \int_{\Omega} \langle  F, d\phi \rangle = 0,   
\end{multline}
for all $\phi \in W^{1,2}_{B,N}\left(\Omega; \Lambda^{k} \right).$ Setting $\beta = B(x)\omega$ and $\psi = B(x)\phi,$ we see immediately that 
it is enough to prove the regularity estimates for $\beta \in  W^{1,2}_{N}\left(\Omega; \Lambda^{k} \right)$ satisfying,  
for all $\psi \in W^{1,2}_{N}\left(\Omega; \Lambda^{k} \right),$
\begin{multline}\label{weakformW12N}
 \int_{\Omega}  \langle A (x)d \left( B^{-1}(x)\beta \right), d\left( B^{-1}(x)\psi \right) \rangle   + \int_{\Omega}\langle \delta \beta  , \delta \psi \rangle 
  + \lambda  \int_{\Omega}\langle \left( B^{-1}(x)\right)^{T}\beta , \psi \rangle   \\
  +\int_{\Omega} \langle  f, \psi \rangle -  \int_{\Omega} \langle  F, d\left( B^{-1}(x)\psi \right) \rangle = 0.   
\end{multline}

\par The rest is as before. In view of remark \ref{flatteninglemmanormal}, proving the regularity estimates for $\beta$ in \eqref{weakformW12N} comes down to proving 
 the estimates for $u \in W_{N}^{1,2}(B_{R}^{+};\Lambda^{k})$ satisfying 
\begin{align*}
 \int_{B_{R}^{+}} \langle \bar{A}(d \left(  \bar{B}^{-1} u \right) ) ; d \left( \bar{B}^{-1} \psi \right) \rangle + \int_{B_{R}^{+}} \langle \delta  u  ; \delta \psi \rangle 
 +  \int_{B_{R}^{+}} \langle \mathcal{P} & ;   \psi  \rangle 
  -\int_{B_{R}^{+}} \langle \mathcal{Q} ; \nabla\psi \rangle \notag \\&+ \int_{B_{R}^{+}} \langle \mathrm{S}\nabla u ; \nabla\psi \rangle= 0,
   \end{align*}
for all $\psi \in W_{N}^{1,2}(B_{R}^{+};\Lambda^{k}),$ where $\bar{A}$ and $\bar{B}$ satisfy the Legendre condition, 
$\mathcal{P} = \widetilde{f} + \mathrm{P}u + \mathrm{R} \nabla u$  and $\mathcal{Q} = \widetilde{F}  -  \mathrm{Q}u ,$ with $\widetilde{f}, \widetilde{F}, \mathrm{P}, 
\mathrm{Q}, \mathrm{R}, \mathrm{S}$ are as in 
remark \ref{flatteninglemmanormal}, for $R>0$ suitably small. 

\par Note that by the Gaffney type 
inequality \eqref{newgaffneynormal}, for any $v \in W_{N}^{1,2}(B_{R}^{+};\Lambda^{k})$ we have, 
 \begin{align*}
  \int_{B_{R}^{+}} \langle \bar{A}(d \left( \bar{B}^{-1}v\right) ) ; d\left( \bar{B}^{-1}v \right) \rangle + \int_{B_{R}^{+}} \left\lvert \delta  v \right\rvert^{2} 
 &\geq c\left(\int_{B_{R}^{+}}  \lvert d\left( \bar{B}^{-1}v \right)\rvert^{2} + \int_{B_{R}^{+}} \lvert \delta v \rvert^{2} \right) \\&\geq c  \int_{B_{R}^{+}} \lvert \nabla v \rvert^{2} .
 \end{align*}
To check that he $L^{2}$ norm of the $D_{nn}$ derivative can be estimated from the equation, we define the linear map $\widetilde{A}:\mathbb{R}^{\tbinom{n}{k}\times n} \rightarrow \mathbb{R}^{\tbinom{n}{k}\times n}$ by the pointwise algebraic identities
$$\langle \widetilde{A} \left( a_{1} \otimes b_{1} \right) ;  a_{2} \otimes b_{2} \rangle = \langle \bar{A}\left( a_{1} \wedge \bar{B}^{-1}b_{1} \right) ; a_{2} 
\wedge \bar{B}^{-1}b_{2} \rangle 
+ \langle a_{1} \lrcorner b_{1} ; a_{2} \lrcorner b_{2}  \rangle,  $$    for every $ a_{1},a_{2} \in \Lambda^{1}$, viewed also as vectors in $\mathbb{R}^{n}$  and $b_{1}, b_{2} \in \Lambda^{k},$
viewed also as vectors in $\mathbb{R}^{\tbinom{n}{k}}$ and the maps $\widetilde{A}^{pq}:\mathbb{R}^{\tbinom{n}{k}} \rightarrow \mathbb{R}^{\tbinom{n}{k}} $ 
by \eqref{Apqdefinition}. Now for every $\xi \in \mathbb{R}^{\tbinom{n}{k}},$ we have, 
 \begin{align*}
  \langle \widetilde{A}^{nn} \xi ; \xi \rangle &= \langle \widetilde{A} (e_{n} \otimes \xi) ; e_{n} \otimes \xi \rangle 
  = \langle \bar{A}\left( e_{n} \wedge \bar{B}^{-1}\xi \right) ; e_{n} \wedge \bar{B}^{-1}\xi \rangle 
+ \lvert e_{n} \lrcorner \xi\rvert^{2} \\&\geq \gamma_{A}\lvert e_{n} \wedge \bar{B}^{-1}\xi \rvert^{2} 
+ \left\lvert e_{n} \lrcorner \xi \right\rvert^{2} 
\geq c_{2} \left\lvert \xi \right\rvert^{2}, 
 \end{align*}
 where $c_{2}>0$ is the constant given by lemma \ref{ellipticitylemma}. This proves $\widetilde{A}^{nn}$ is invertible. Thus the regularity estimates in 
 Section \ref{boundary estimates} 
 carry over to this case as well.

\subsection{Main theorems}
\subsubsection{General Hodge type systems}
\begin{theorem}\label{generalHodgesystemtheorem}
  Let $1 \leq k \leq n-1,$  $r \geq 0$ be integers and $ 0 <  \gamma < 1$ and $1 < p < \infty$ be real numbers. 
  Let $\Omega \subset \mathbb{R}^n$ be an open, bounded $C^{r+2},$ respectively $C^{r+2, \gamma},$ set. Let 
 $A \in C^{r+1}(\overline{\Omega} ; L(\Lambda^{k+1},\Lambda^{k+1}))$, respectively 
 $C^{r+1, \gamma}(\overline{\Omega} ; L(\Lambda^{k+1},\Lambda^{k+1})),$ satisfy the Legendre-Hadamard condition and $B \in C^{r+2}(\overline{\Omega} ; L(\Lambda^{k},\Lambda^{k}))$, respectively 
 $C^{r+2, \gamma}(\overline{\Omega} ; L(\Lambda^{k},\Lambda^{k})),$ satisfy the Legendre condition.
Then the following holds.
\begin{enumerate}
 \item There exists a constant $\rho \in \mathbb{R}$ and an at most countable set 
$\sigma \subset ( -\infty, \rho )$, with no limit points except possibly $- \infty.$
\item The following boundary value problem,
\begin{equation}\label{eigenvalue problem hodge system general full regularity}
 \left\lbrace \begin{gathered}
                \delta ( A (x) d\alpha ) + \left( B(x) \right)^{T}d \delta\left( B(x) \alpha \right)  = \sigma_{i} B(x) \alpha  \text{ in } \Omega, \\
                 \nu\wedge \alpha = 0 \text{  on } \partial\Omega, \\
                 \nu\wedge \delta \left( B(x) \alpha \right) = 0 \text{ on } \partial\Omega.
                \end{gathered} 
                \right. \tag{$EP_{T}$}
\end{equation}
has non-trivial solutions $\alpha \in W^{r+2,p}(\Omega, \Lambda^{k}),$ respectively $C^{r+2,\gamma}(\overline{\Omega}, \Lambda^{k}),$ if and 
only if  $\sigma_{i} \in \sigma$ and the space of solutions to \eqref{eigenvalue problem hodge system general full regularity} is finite-dimensional for 
any $\sigma_{i} \in \sigma.$
\item If $\lambda \notin \sigma$, then 
for any $f \in W^{r,p}(\Omega, \Lambda^{k}) $, respectively $C^{r,\gamma}(\overline{\Omega}, \Lambda^{k}),$ and any $\omega_{0} \in W^{r+2,p}(\Omega, \Lambda^{k}),$ 
respectively $C^{r+2,\gamma}(\overline{\Omega}, \Lambda^{k}),$ there exists a unique 
 solution $\omega \in W^{r+2,p}(\Omega, \Lambda^{k}),$ respectively $C^{r+2,\gamma}(\overline{\Omega}, \Lambda^{k}),$ to the following boundary value problem:
 \begin{equation}\label{bvp hodge elliptic system general full regularity}
 \left\lbrace \begin{gathered}
                \delta ( A (x) d\omega )  + \left( B(x) \right)^{T}d \delta\left( B(x) \omega \right)   =  \lambda B(x)\omega + f  \text{ in } \Omega, \\
                \nu\wedge \omega = \nu\wedge\omega_{0} \text{  on } \partial\Omega. \\
                \nu\wedge \delta \left( B(x)\omega \right) = \nu\wedge\delta \left( B(x) \omega_{0} \right) \text{ on } \partial\Omega, 
                \end{gathered} 
                \right. \tag{$P_{T}$}
\end{equation}
which satisfies the estimate
\begin{align*}
 \left\lVert \omega \right\rVert_{W^{r+2,p}} \leq c \left( \left\lVert \omega\right\rVert_{W^{r,p}} + \left\lVert f\right\rVert_{W^{r,p}} 
 + \left\lVert \omega_{0} \right\rVert_{W^{r+2,p}}\right),
\end{align*}
respectively,  
\begin{align*}
 \left\lVert \omega \right\rVert_{C^{r+2,\gamma}} \leq c \left( \left\lVert \omega \right\rVert_{C^{r,\gamma}} + \left\lVert f \right\rVert_{C^{r,\gamma}} 
 + \left\lVert \omega_{0} \right\rVert_{C^{r+2,\gamma}}\right).
\end{align*}
\end{enumerate}
\end{theorem}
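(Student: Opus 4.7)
My strategy is to follow the same structure as the proof of Theorem \ref{second order hodge system tangential}, substituting the new Gaffney type inequality \eqref{newgaffneytangent} for the classical one and invoking the regularity estimates prepared in Subsection \ref{regularitydiscssiongeneraltan}.

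I would begin by reducing to homogeneous boundary data via $\omega = \overline{\omega} + \omega_{0}$, so that \eqref{bvp hodge elliptic system general full regularity} becomes an equation for $\overline{\omega} \in W_{T}^{1,2}(\Omega;\Lambda^{k})$ with right hand side $g := f + \lambda B\omega_{0} - \delta(A d\omega_{0}) - B^{T} d\delta(B\omega_{0})$ and homogeneous boundary conditions $\nu\wedge\overline{\omega}=0$ and $\nu\wedge\delta(B\overline{\omega})=0$. The weak formulation associated to the first condition uses the bilinear form
\begin{equation*}
a_{\lambda}(u,v) := \int_{\Omega}\langle A du, dv\rangle + \int_{\Omega}\langle \delta(Bu), \delta(Bv)\rangle - \lambda \int_{\Omega}\langle Bu, v\rangle
\end{equation*}
on $W_{T}^{1,2}(\Omega;\Lambda^{k})$. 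Continuity is immediate. For coercivity, the new Gaffney type inequality \eqref{newgaffneytangent} applied to $B$ together with Lemma \ref{garding inequality} gives
\begin{equation*}
\int_{\Omega}\langle A du, du\rangle + \int_{\Omega}|\delta(Bu)|^{2} \geq c\lVert\nabla u\rVert_{L^{2}}^{2} - C\lVert u\rVert_{L^{2}}^{2},
\end{equation*}
and since $B$ satisfies the Legendre condition, $-\lambda\int_{\Omega}\langle Bu,u\rangle \geq -\lambda\gamma_{B}\lVert u\rVert_{L^{2}}^{2}$; so for $\lambda$ sufficiently negative $a_{\lambda}$ is coercive on $W_{T}^{1,2}$ and Lax-Milgram produces $\overline{\omega}$ solving $a_{\lambda}(\overline{\omega},\phi) = \int_{\Omega}\langle g,\phi\rangle$.

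Once existence is secured for a single value of $\lambda$, the compactness of the embedding $W_{T}^{1,2}\hookrightarrow L^{2}$ and the Fredholm alternative yield the claimed structure of the spectrum $\sigma$, finite dimensionality of the eigenspaces, and solvability in $W^{1,2}$ for every $\lambda\notin \sigma$. The upgrade from $W^{1,2}$ to $W^{r+2,p}$ or $C^{r+2,\gamma}$ is then delivered by Subsection \ref{regularitydiscssiongeneraltan}: the new Gaffney inequality replaces the classical one in the Caccioppoli, decay, $\mathcal{L}^{2,\mu}$ and $L^{p}$ estimates of Section \ref{boundary estimates} on half-balls, while Lemma \ref{ellipticitylemma} ensures invertibility of the effective coefficient $\widetilde{A}^{nn}$ so that the $D_{nn}$ derivative can be recovered from the equation; the patching argument is as in Section \ref{schauderandlp}, and lower order norms on the right hand side are absorbed by the standard interpolation trick.

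Finally, the strong form of the PDE is recovered by testing against $\phi\in C_{c}^{\infty}(\Omega;\Lambda^{k})$, and for general $\phi\in W_{T}^{1,2}$ the residual boundary integral $\int_{\partial\Omega}\langle \nu\wedge\delta(B\overline{\omega}), B\phi\rangle$ must vanish; by extending a suitable trace inside $\Omega$ as a $C^{1}$ function and using the invertibility of $B$ to produce an admissible test function, one forces $\nu\wedge\delta(B\overline{\omega})=0$ on $\partial\Omega$, exactly as in the proof of Theorem \ref{second order hodge system tangential}. I expect the principal obstacle to be the coercivity of $a_{\lambda}$: this is where the new Gaffney inequality \eqref{newgaffneytangent} is genuinely indispensable, since the twisting of $B$ inside the codifferential prevents the classical Gaffney inequality from controlling $\lVert \nabla u\rVert_{L^{2}}$ by $\lVert du\rVert_{L^{2}} + \lVert \delta(Bu)\rVert_{L^{2}} + \lVert u\rVert_{L^{2}}$, and this is precisely the gap that the div-curl theory of the previous section was built to fill.
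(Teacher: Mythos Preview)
Your plan coincides with the paper's proof: Lax--Milgram on $W_T^{1,2}$ via the new Gaffney inequality (Theorem \ref{newgaffney}) plus G\aa{}rding, Fredholm alternative for the spectrum, and then the regularity machinery of Section \ref{regularitydiscssiongeneraltan}. The only tweak is in the recovery of the second boundary condition: rather than using invertibility of $B$ to manufacture a test function, the paper simply plugs $\phi=\nu\wedge\delta(B\overline\omega)\in W_T^{1,2}$ into the residual boundary identity and invokes the Legendre condition on $B$ to get $\int_{\partial\Omega}\langle B^{T}\xi,\xi\rangle\geq c\int_{\partial\Omega}|\xi|^{2}$ with $\xi=\nu\wedge\delta(B\overline\omega)$, forcing $\xi=0$.
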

\begin{remark}\label{remarkhodgegeneraltan}
 As in remark \ref{spectrum}, it is easy to see that if $A$ has constant coefficients or satisfies 
 the Legendre condition, then $\sigma \subset ( -\infty, 0].$ If $\mathcal{H}_{T}\left( \Omega; \Lambda^{k} \right) \neq \lbrace 0 \rbrace,$  then clearly, for any nontrivial 
 $h \in \mathcal{H}_{T}\left( \Omega; \Lambda^{k} \right),$ $\alpha = d\beta + h$ is a nontrivial solution for \eqref{eigenvalue problem hodge system general full regularity} with 
 $\sigma_{i} = 0,$ where $\beta$ is a solution of 
 \begin{align*}
 \left\lbrace \begin{aligned}\delta ( B d \beta ) &=  - \delta ( Bh ) &&\text{ in } \Omega, \\
 \delta \beta &= 0 &&\text{ in } \Omega, \\
 \nu \wedge \beta &= 0 &&\text{ on  } \partial \Omega.\end{aligned}\right. \end{align*}
Thus the eigenforms with eigenvalue $0$ are in one-to-one correspondence with the nontrivial harmonic fields in $\mathcal{H}_{T}\left( \Omega; \Lambda^{k} \right).$ Consequently,  
\eqref{bvp hodge elliptic system general full regularity} with $\lambda = 0$ can be solved for any $f$ satisfying the additional conditions $\delta f = 0$ in $\Omega$ and 
$ f \in \left( \mathcal{H}_{T}\left( \Omega; \Lambda^{k} \right)\right)^{\perp}.$ If $\mathcal{H}_{T}\left( \Omega; \Lambda^{k} \right) = \lbrace 0 \rbrace,$ no extra condition 
on $f$ is needed.    
\end{remark}

\begin{proof}
We divide the proof in two steps. \smallskip

\noindent \emph{Step 1 (Existence in $L^{2}$):} 
 For a given $\lambda \in \mathbb{R}$, the bilinear operator 
$a_{\lambda}: W_{T}^{1,2}(\Omega;\Lambda^k) \times W_{T}^{1,2}(\Omega;\Lambda^k) \rightarrow \mathbb{R}$ defined by, 
\begin{align*}
 a_{\lambda}(u,v)  =\int_{\Omega}   \langle A (x)d u  , d v \rangle + \int_{\Omega}   \langle \delta \left( B(x) u \right)  , \delta \left( B(x) v \right) \rangle 
 + \lambda \int_{\Omega} \langle B(x) u , v \rangle ,
\end{align*}
 is continuous and coercive, by theorem \ref{newgaffney} and the G\aa{}rding inequality, for any $\lambda $ large enough.  
Now Lax-Milgram theorem, together with the compact embedding of $W_{T}^{1,2}(\Omega;\Lambda^k)$ into $L^{2}(\Omega;\Lambda^k)$ implies that Fredholm theory holds.
Thus, for any $g \in L^{2}(\Omega, \Lambda^{k})$ and any $\lambda \notin \sigma,$ there exists an unique solution $\overline{\omega} \in W_{T}^{1,2}(\Omega;\Lambda^k)$ 
satisfying, for all  $\theta \in W_{T}^{1,2}(\Omega;\Lambda^k),$ 
\begin{align}\label{equationwithfgeneral}
 \int_{\Omega}  \langle A (x) d\overline{\omega} , d\theta \rangle + \langle \delta \left( B(x)\overline{\omega}\right) , \delta \left( B(x) \theta \right) \rangle + 
  \lambda  \int_{\Omega}\langle  B(x)\overline{\omega} , \theta \rangle 
  +\int_{\Omega} \langle  g, \theta\rangle = 0 . 
\end{align}

\noindent \emph{Step 2 (Regularity and boundary conditions):} Now the desired regularity estimates follow the same way as in theorem \ref{boundary Wrp regularity linear}, respectively 
theorem \ref{boundary Cralpha regularity linear} in the H\"{o}lder case, in view of the discussion in Section \ref{regularitydiscssiongeneraltan}.  The estimates for $1 < p < 2$ also extends the existence theory to the case $1 < p < 2$ by usual density arguments.   
From \eqref{equationwithfgeneral}, integrating by parts, we obtain, for all  $ \phi \in W_{T}^{1,2}(\Omega, \Lambda^{k}),$
\begin{multline*}
 \int_{\Omega} \langle \delta  ( A (x) d\overline{\omega} )    + \left( B(x) \right)^{T}d \delta\left( B(x) \overline{\omega} \right) ;  \phi \rangle 
 - \int_{\Omega} \langle  \lambda B(x)\overline{\omega} + g ; \phi \rangle \\ =  \int_{\partial\Omega} \left( \langle A(x) d\overline{\omega}; \nu\wedge\phi \rangle   
+  \langle \nu\wedge\delta \left( B(x)\overline{\omega}\right) ; \left( B(x)\phi\right) \rangle \right).
\end{multline*}
Thus taking  $\phi \in C_{c}^{\infty}(\Omega, \Lambda^{k})$ we see that $\overline{\omega}$ satisfies  
 $$\delta ( A(x) d\overline{\omega} )   + \left( B(x) \right)^{T}d \delta\left( B(x) \overline{\omega} \right) = \lambda\left( B(x)\overline{\omega} \right) + g 
 \qquad \text{ in } \Omega $$ and the integral on the boundary vanish separately. But since $\phi \in W_{T}^{1,2}(\Omega, \Lambda^{k}),$ $\nu\wedge\phi = 0.$ Hence we obtain,
 $$ \int_{\partial\Omega}   \langle \left( B(x)\right)^{T} \left[ \nu\wedge\delta \left( B(x)\overline{\omega}\right) \right] ; \phi \rangle  =0 \qquad 
 \text{ for any } \phi \in W_{T}^{1,2}(\Omega; \Lambda^{k}).$$ 
 Extending $\nu$ as a $C^{1}$ function inside $\Omega,$ we see $\nu\wedge\delta \left( B(x)\overline{\omega}\right) \in  W_{T}^{1,2}(\Omega; \Lambda^{k}).$
 Thus using the Legendre condition of $B,$ we obtain, 
 $$0 = \int_{\partial\Omega}   \langle 
 \left( B(x)\right)^{T} \left[ \nu\wedge\delta \left( B(x)\overline{\omega}\right) \right] ; \left[ \nu\wedge\delta \left( B(x)\overline{\omega}\right) \right] \rangle 
 \geq c \int_{\partial\Omega} \left\lvert \nu\wedge\delta \left( B(x)\overline{\omega}\right) \right\rvert^{2}.$$
 This proves $\nu\wedge\delta \left( B(x)\overline{\omega}\right)  = 0$ on $\partial\Omega.$ Now taking 
 $g= f + \lambda B(x) \omega_{0} -\delta (A (x) d\omega_{0}) - \left( B(x) \right)^{T}d \delta\left( B(x) \omega_{0} \right) $ and 
 setting $\omega = \overline{\omega} + \omega_{0},$ we see that $\omega $ 
    is a solution to \eqref{bvp hodge elliptic system full regularity} with the desired regularity. This finishes the proof. 
\end{proof}

\begin{theorem}\label{generalHodgesystemtheoremnormal}
 Let $1 \leq k \leq n-1,$  $r \geq 0$ be integers and $ 0 <  \gamma < 1$ and $1 < p < \infty$ be real numbers. 
 Let $\Omega \subset \mathbb{R}^n$ be an open, bounded $C^{r+2},$ respectively $C^{r+2, \gamma},$ set. Let 
 $A \in C^{r+1}(\overline{\Omega} ; L(\Lambda^{k+1},\Lambda^{k+1}))$, respectively 
 $C^{r+1, \gamma}(\overline{\Omega} ; L(\Lambda^{k+1},\Lambda^{k+1})),$ and $B \in C^{r+2}(\overline{\Omega} ; L(\Lambda^{k},\Lambda^{k}))$, respectively 
 $C^{r+2, \gamma}(\overline{\Omega} ; L(\Lambda^{k},\Lambda^{k})),$ both satisfy the Legendre condition.
Then the following holds.
\begin{enumerate}
 \item There exists a constant $\rho \in \mathbb{R}$ and an at most countable set 
$\sigma \subset ( -\infty, \rho )$, with no limit points except possibly $- \infty.$
\item The following boundary value problem,
\begin{equation}\label{eigenvalue problem hodge system general full regularity normal}
 \left\lbrace \begin{gathered}
                \delta ( A (x) d\alpha ) + \left( B(x) \right)^{T}d \delta\left( B(x) \alpha \right)  = \sigma_{i} B(x) \alpha  \text{ in } \Omega, \\
                 \nu\lrcorner \left( B(x)\alpha \right) = 0 \text{  on } \partial\Omega, \\
                 \nu\lrcorner \left( A(x) d\alpha \right) = 0 \text{ on } \partial\Omega.
                \end{gathered} 
                \right. \tag{$EP_{N}$}
\end{equation}
has non-trivial solutions $\alpha \in W^{r+2,p}(\Omega, \Lambda^{k}),$ respectively $C^{r+2,\gamma}(\overline{\Omega}, \Lambda^{k}),$ if and only if  $\sigma_{i} \in \sigma$ 
and the space of solutions to \eqref{eigenvalue problem hodge system general full regularity normal} is finite-dimensional for 
any $\sigma_{i} \in \sigma.$
\item If $\lambda \notin \sigma$, then 
for any $f \in W^{r,p}(\Omega, \Lambda^{k}) $, respectively $C^{r,\gamma}(\overline{\Omega}, \Lambda^{k}),$ and any $\omega_{0} \in W^{r+2,p}(\Omega, \Lambda^{k}),$ 
respectively $C^{r+2,\gamma}(\overline{\Omega}, \Lambda^{k}),$ there exists a unique 
 solution $\omega \in W^{r+2,p}(\Omega, \Lambda^{k}),$ respectively $C^{r+2,\gamma}(\overline{\Omega}, \Lambda^{k}),$ to the following boundary value problem:
 \begin{equation}\label{bvp hodge elliptic system general full regularity normal}
 \left\lbrace \begin{gathered}
                \delta ( A (x) d\omega )  + \left( B(x) \right)^{T}d \delta\left( B(x) \omega \right)   =  \lambda B(x)\omega + f  \text{ in } \Omega, \\
                \nu\lrcorner \left( B(x)\omega \right) = \nu\lrcorner\left( B(x)\omega_{0} \right) \text{  on } \partial\Omega. \\
                \nu\lrcorner  \left( A(x)d\omega \right) = \nu\lrcorner \left( A(x) d\omega_{0} \right) \text{ on } \partial\Omega, 
                \end{gathered} 
                \right. \tag{$P_{N}$}
\end{equation}
which satisfies the estimate
\begin{align*}
 \left\lVert \omega \right\rVert_{W^{r+2,p}} \leq c \left( \left\lVert \omega\right\rVert_{W^{r,p}} + \left\lVert f\right\rVert_{W^{r,p}} 
 + \left\lVert \omega_{0} \right\rVert_{W^{r+2,p}}\right),
\end{align*}
respectively,  
\begin{align*}
 \left\lVert \omega \right\rVert_{C^{r+2,\gamma}} \leq c \left( \left\lVert \omega \right\rVert_{C^{r,\gamma}} + \left\lVert f \right\rVert_{C^{r,\gamma}} 
 + \left\lVert \omega_{0} \right\rVert_{C^{r+2,\gamma}}\right).
\end{align*}
\end{enumerate}
\end{theorem}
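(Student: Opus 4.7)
The plan is to mirror the two-step strategy used for Theorem~\ref{generalHodgesystemtheorem}, now working on the natural energy space adapted to the normal boundary data,
$$W^{1,2}_{B,N}(\Omega;\Lambda^k) := \{\omega \in W^{1,2}(\Omega;\Lambda^k) : \nu\lrcorner(B(x)\omega) = 0 \text{ on } \partial\Omega\}.$$
On this space, for $\lambda \in \mathbb{R}$ consider the bilinear form
$$a_\lambda(u,v) := \int_\Omega \langle A(x)du, dv\rangle + \int_\Omega \langle \delta(B(x)u), \delta(B(x)v)\rangle + \lambda\int_\Omega \langle B(x)u, v\rangle,$$
which is the weak formulation dual to \eqref{bvp hodge elliptic system general full regularity normal}. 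Since $A$ satisfies the Legendre condition, Lemma~\ref{garding inequality} gives $\int_\Omega \langle Adu, du\rangle \geq c\int_\Omega|du|^2$ with no lower-order defect, while Theorem~\ref{newgaffney}(i), applied with the boundary condition $\nu\lrcorner(Bu)=0$, bounds $\|u\|_{W^{1,2}}$ by $\|du\|_{L^2}+\|\delta(Bu)\|_{L^2}+\|u\|_{L^2}$. Combined with the Legendre condition on $B$, this yields coercivity of $a_\lambda$ on $W^{1,2}_{B,N}$ for $\lambda$ sufficiently large. The compact embedding $W^{1,2}_{B,N} \hookrightarrow L^2$ and the Fredholm alternative then produce the spectrum $\sigma \subset (-\infty,\rho)$ with the claimed structure, finite-dimensional eigenspaces, and existence of a unique $L^2$ weak solution for $\lambda \notin \sigma$.

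For regularity, perform the change of variable $\beta = B(x)\omega$, $\psi = B(x)\phi$ to rewrite the weak form \eqref{weakformulationgeneralnormal} as a problem for $\beta \in W^{1,2}_N(\Omega;\Lambda^k)$ tested against $\psi \in W^{1,2}_N$; this is the reduction described at the start of Section~\ref{regularitydiscssiongeneralnormal}. The regularity machinery of Section~\ref{boundary estimates}—Caccioppoli, decay, $\mathcal{L}^{2,\mu}$ and $L^p$ estimates for the flattened operator—then applies, once one verifies algebraically that $\widetilde{A}^{nn}$ (constructed with the $\bar{B}^{-1}$ twist as in Section~\ref{regularitydiscssiongeneralnormal}) is invertible; this is precisely the content of the second inequality in Lemma~\ref{ellipticitylemma}. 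The resulting $W^{r+2,p}$ and $C^{r+2,\gamma}$ estimates, combined with density and duality, propagate the $L^2$ existence to the full range $1 < p < \infty$.

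It remains to recover the boundary condition $\nu\lrcorner(A(x)d\omega) = 0$ from the weak formulation. Integration by parts in \eqref{weakformulationgeneralnormal} (with $F=0$) produces two boundary contributions; the $\nu\wedge\delta(B\overline{\omega})$ term pairs with $B\phi$, and vanishes automatically because $\nu\lrcorner(B\phi)=0$ for $\phi \in W^{1,2}_{B,N}$. Testing against $\phi \in C^\infty_c(\Omega)$ gives the interior PDE, reducing the remaining boundary identity to
$$\int_{\partial\Omega}\langle \nu\lrcorner(A(x)d\overline{\omega}), \phi\rangle = 0 \quad \text{for all } \phi \in W^{1,2}_{B,N}(\Omega;\Lambda^k).$$
Extending $\nu$ as a $C^1$ vector field inside $\Omega$ and choosing $\phi := B^{-1}(\nu\lrcorner(Ad\overline{\omega}))$ produces a legitimate test function in $W^{1,2}_{B,N}$, because $\nu\lrcorner\nu\lrcorner = 0$; since $B^{-1}$ inherits a Legendre bound from $B$, the identity collapses to $c\int_{\partial\Omega}|\nu\lrcorner(Ad\overline{\omega})|^2 \leq 0$, forcing $\nu\lrcorner(Ad\overline{\omega}) = 0$. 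Setting $g := f + \lambda B\omega_0 - \delta(Ad\omega_0) - B^T d\delta(B\omega_0)$ and $\omega := \overline{\omega} + \omega_0$ finishes the proof. The principal obstacle, relative to the tangential case, is precisely this boundary-trace extraction: the weighted constraint $\nu\lrcorner(B\phi) = 0$ prevents the naive test function used in Theorem~\ref{generalHodgesystemtheorem}, and the $B^{-1}$ twist is exactly what makes the \emph{Legendre} (not merely Legendre–Hadamard) assumption on $A$, together with the $C^{r+2}$ regularity of $B$, essential at every stage.
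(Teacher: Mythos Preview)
Your proof is correct and follows essentially the same route as the paper. The only cosmetic difference is that you carry out the $L^2$ existence step directly on $W^{1,2}_{B,N}$ and convert to $\beta = B\omega \in W^{1,2}_N$ only for the regularity estimates, whereas the paper performs the substitution first and works entirely in $W^{1,2}_N$; your boundary-trace argument with test function $\phi = B^{-1}(\nu\lrcorner(Ad\overline{\omega}))$ is, under that substitution, exactly the paper's choice $\psi = \nu\lrcorner(Ad\overline{\omega})$ together with the Legendre bound for $(B^{-1})^{T}$.
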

\begin{remark}
 Here clearly $\sigma \subset ( -\infty, 0].$ If $\mathcal{H}_{N}\left( \Omega; \Lambda^{k} \right) \neq \lbrace 0 \rbrace,$  then for any nontrivial 
 $h \in \mathcal{H}_{N}\left( \Omega; \Lambda^{k} \right),$ $\alpha = B^{-1}\left( \delta\beta + h \right) $ is a nontrivial solution for \eqref{eigenvalue problem hodge system general full regularity normal} with 
 $\sigma_{i} = 0,$ where $\beta$ is a solution of 
 \begin{align*}
 \left\lbrace \begin{aligned} d ( B^{-1} \delta \beta ) &=  - d ( B^{-1} h ) &&\text{ in } \Omega, \\
 d\beta &= 0 &&\text{ in } \Omega, \\
 \nu \lrcorner \beta &= 0 &&\text{ on  } \partial \Omega.\end{aligned}\right. \end{align*}
 Note that the solvability of the above system is given by the dual version of Theorem \ref{maxwell linear regularity full}.
Thus the eigenforms with eigenvalue $0$ are in one-to-one correspondence with the nontrivial harmonic fields in $\mathcal{H}_{N}\left( \Omega; \Lambda^{k} \right).$ Since any such 
nontrivial $\alpha$ satisfies $d\alpha = 0$ in $\Omega,$ \eqref{bvp hodge elliptic system general full regularity normal} with $\lambda = 0$ can be solved for 
any $f$ satisfying the additional conditions $\delta f = 0$ in $\Omega,$ $\nu \lrcorner f = 0$ on $\partial\Omega$ and 
$ f \in \left( \mathcal{H}_{N}\left( \Omega; \Lambda^{k} \right)\right)^{\perp}.$ If $\mathcal{H}_{N}\left( \Omega; \Lambda^{k} \right) = \lbrace 0 \rbrace,$ no extra condition 
on $f$ is needed.    
\end{remark}
\begin{proof}
 As in the discussion in section \ref{regularitydiscssiongeneralnormal}, we start with the weak formulation 
 \eqref{weakformulationgeneralnormal} and convert it to the equivalent formulation \eqref{weakformW12N}. 
 Existence and regularity results follow in an analogous way from this after flattening the boundary (see remark \ref{flatteninglemmanormal}). The only thing that is different is the verification of the boundary condition. As before, the integral on the 
 boundary vanish separately and we obtain 
 $$ \int_{\partial\Omega}\langle \left( B^{-1}(x) \right)^{T}\left[ \nu \lrcorner A(x)d\bar{\omega} \right]; \phi \rangle = 0 \qquad \text{ for any } \phi \in W^{1,2}_{N}. $$ Plugging in 
 $\nu \lrcorner A(x)d\bar{\omega} $ in place of $\phi,$ we obtain
 $$ 0 =  \int_{\partial\Omega}\langle \left( B^{-1}(x) \right)^{T}\left[ \nu \lrcorner A(x)d\bar{\omega} \right]; \nu \lrcorner A(x)d\bar{\omega} \rangle \geq 
 c \int_{\partial\Omega} \left\lvert \nu \lrcorner A(x)d\bar{\omega} \right\rvert^{2}.$$
\end{proof}

\subsubsection{General Maxwell operators} 
\begin{theorem}
 Let $1 \leq k \leq n-1,$ $r \geq 0$ be integers and $ 0 < \gamma < 1$ and $1 < p < \infty$ be real numbers. 
 Let $\Omega \subset \mathbb{R}^n$ be an open, bounded $C^{r+2},$ respectively $C^{r+2, \gamma},$ set.
  Let $A \in C^{r+1}(\overline{\Omega} ; L(\Lambda^{k+1},\Lambda^{k+1}))$, respectively 
 $C^{r+1, \gamma}(\overline{\Omega} ; L(\Lambda^{k+1},\Lambda^{k+1})),$ and $B \in C^{r+2}(\overline{\Omega} ; L(\Lambda^{k},\Lambda^{k}))$, respectively 
 $C^{r+2, \gamma}(\overline{\Omega} ; L(\Lambda^{k},\Lambda^{k})),$ satisfy the Legendre condition. 
Let  $\omega_{0} \in W^{r+2,p}(\Omega, \Lambda^{k})$, respectively $C^{r+2,\gamma}(\overline{\Omega} ; \Lambda^{k}),$ 
 $f \in W^{r,p}(\Omega, \Lambda^{k+1}),$ respectively $C^{r,\gamma}(\overline{\Omega} ; \Lambda^{k+1})$ and $g \in W^{r+1,p}(\Omega, \Lambda^{k-1}),$
  respectively $C^{r+1,\gamma}(\overline{\Omega} ; \Lambda^{k-1}),$ and $\lambda \geq 0.$ Suppose $f,$ $g$ and $\lambda$  satisfy 
  \begin{align}
   \delta f + \lambda g = 0 \text{ and } \delta g = 0 \text{ in } \Omega . \tag{C}
  \end{align}
\begin{itemize}
 \item[(i)] Suppose $ g \in \left(\mathcal{H}_T(\Omega;\Lambda^{k-1})\right)^{\perp}$ and if $\lambda = 0,$ assume in addition that 
 $f \in \left(\mathcal{H}_T(\Omega;\Lambda^{k})\right)^{\perp}.$  Then   
 the following boundary value problem, 
 \begin{equation} \label{problemMaxwellgeneraltan}
   \left\lbrace \begin{gathered}
                \delta ( A (x) d\omega )   = \lambda B(x)\omega + f  \text{ in } \Omega, \\
                \delta \left( B(x) \omega \right) = g \text{ in } \Omega, \\
                \nu\wedge \omega = \nu\wedge \omega_{0} \text{  on } \partial\Omega,
                \end{gathered} 
                \right. \tag{$PM_{T}$}
\end{equation}
has a unique solution $\omega \in W^{ r+2, p}(\Omega, \Lambda^{k}),$ 
 respectively $C^{r+2,\gamma}(\overline{\Omega} ; \Lambda^{k}),$ 
satisfying the estimates 
\begin{align*}
 \left\lVert \omega \right\rVert_{W^{r+2,p}} \leq c \left(  \left\lVert  \omega \right\rVert_{L^{p}} + \left\lVert f\right\rVert_{W^{r,p}}  
 + \left\lVert g\right\rVert_{W^{r,p}} + \left\lVert \omega_{0}\right\rVert_{W^{r+2,p}}\right), 
\end{align*}
respectively,  
\begin{align*}
 \left\lVert \omega \right\rVert_{C^{r+2,\gamma}} \leq c \left( \left\lVert  \omega \right\rVert_{C^{0,\gamma}} 
 + \left\lVert f \right\rVert_{C^{r,\gamma}} + \left\lVert g \right\rVert_{C^{r,\gamma}} 
 + \left\lVert \omega_{0} \right\rVert_{C^{r+2,\gamma}} \right).
\end{align*}

\item[(ii)]  Suppose \begin{align*}
 \nu\lrcorner g = \nu \lrcorner \delta \left(  B(x)\omega_{0} \right) \quad\text{ and }\quad \nu\lrcorner f 
 = \nu \lrcorner \left[ \delta \left( A(x)d\omega_{0} \right) - \lambda 
 B(x)\omega_{0} \right] \quad\text{ on } \partial\Omega
\end{align*}
and 
$$ \int_{\Omega} \left\langle g ; \psi \right\rangle  - \int_{\partial\Omega} \left\langle \nu\lrcorner \left( B(x)\omega_{0}\right); \psi \right\rangle = 0 \qquad \text{ for all } \psi 
\in \mathcal{H}_N(\Omega;\Lambda^{k-1}).$$
 If $\lambda =0,$ assume in addition that  
$$\int_{\Omega} \left\langle f ; \phi \right\rangle  - \int_{\partial\Omega} \left\langle \nu\lrcorner \left( A(x)d\omega_{0} \right); \phi \right\rangle = 0 \qquad \text{ for all } \phi 
\in \mathcal{H}_N(\Omega;\Lambda^{k}). $$ Then the following boundary value problem, 
 \begin{equation} \label{problemMaxwellgeneralnormal}
   \left\lbrace \begin{gathered}
                \delta ( A (x) d\omega )   = \lambda B(x)\omega + f  \text{ in } \Omega, \\
                \delta \left( B(x) \omega \right) = g \text{ in } \Omega, \\
                \nu\lrcorner \left( B(x) \omega \right) = \nu\lrcorner \left( B(x) \omega_{0} \right)  \text{  on } \partial\Omega, \\
                \nu\lrcorner \left( A(x) d\omega \right) = \nu\lrcorner \left( A(x) d\omega_{0} \right)  \text{  on } \partial\Omega. 
                \end{gathered} 
                \right. \tag{$PM_{N}$}
\end{equation}
has a unique solution $\omega \in W^{ r+2, p}(\Omega, \Lambda^{k}),$ respectively $C^{r+2,\gamma}(\overline{\Omega} ; \Lambda^{k}),$  
satisfying the estimates 
\begin{align*}
 \left\lVert \omega \right\rVert_{W^{r+2,p}} \leq c \left(  \left\lVert  \omega \right\rVert_{L^{p}} + \left\lVert f\right\rVert_{W^{r,p}}  
 + \left\lVert g\right\rVert_{W^{r,p}} + \left\lVert \omega_{0}\right\rVert_{W^{r+2,p}}\right), 
\end{align*}
respectively,  
\begin{align*}
 \left\lVert \omega \right\rVert_{C^{r+2,\gamma}} \leq c \left( \left\lVert  \omega \right\rVert_{C^{0,\gamma}} 
 + \left\lVert f \right\rVert_{C^{r,\gamma}} + \left\lVert g \right\rVert_{C^{r,\gamma}} 
 + \left\lVert \omega_{0} \right\rVert_{C^{r+2,\gamma}} \right).
\end{align*}
\end{itemize}
\end{theorem}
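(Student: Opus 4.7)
My plan is to mimic the reduction of Theorem~\ref{maxwell linear regularity full} from Theorem~\ref{second order hodge system tangential}, now using Theorem~\ref{generalHodgesystemtheorem} in place of the latter. First, I would eliminate the inhomogeneous data $g$ and $\omega_{0}$ by constructing a lifting $\omega^{\ast}\in W^{r+2,p}(\Omega;\Lambda^{k})$ (respectively $C^{r+2,\gamma}$) with $\nu\wedge\omega^{\ast}=\nu\wedge\omega_{0}$ on $\partial\Omega$ and $\delta(B(x)\omega^{\ast})=g$ in $\Omega$. Such an $\omega^{\ast}$ is provided by Theorem~\ref{linear system assymmetric d-delta}(i) applied with $A=\mathbf{I}$, with $d\omega_{0}$ in place of $f$, and with the original $g$; the required closedness of $d\omega_{0}$, its compatibility with $\nu\wedge\omega_{0}$, the coclosedness of $g$, and the harmonic orthogonality of $g$ are either automatic or part of our standing hypotheses. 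With $\omega=u+\omega^{\ast}$, the problem reduces to finding $u$ with $\delta(A\,du)=\lambda Bu+F$, $\delta(Bu)=0$ and $\nu\wedge u=0$, where $F:=f+\lambda B\omega^{\ast}-\delta(A\,d\omega^{\ast})$. Crucially, the compatibility $\delta f+\lambda g=0$ now reads $\delta F=0$.

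Next I would invoke Theorem~\ref{generalHodgesystemtheorem} to solve the full generalized Hodge-type system $\delta(A\,du)+B^{T}d\,\delta(Bu)=\lambda Bu+F$ with both $\nu\wedge u=0$ and $\nu\wedge\delta(Bu)=0$. For $\lambda>0$ the Legendre-type considerations in remark~\ref{remarkhodgegeneraltan} ensure $\lambda\notin\sigma$, and for $\lambda=0$ the hypothesis $f\in(\mathcal{H}_{T}(\Omega;\Lambda^{k}))^{\perp}$ (together with the fact that $\delta(A\,d\omega^{\ast})$ is automatically orthogonal to $\mathcal{H}_{T}^{k}$ by integration by parts) places $F$ in the solvable range. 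Setting $\eta:=\delta(Bu)$, applying $\delta$ to the Hodge equation and using $\delta F=0$ yields $\delta(B^{T}d\eta)=\lambda\eta$ in $\Omega$, while $\delta\eta=0$ is automatic and $\nu\wedge\eta=0$ follows from the second boundary condition. In the case $\lambda=0$ one tests with $\eta$ and uses the Legendre condition on $B^{T}$ to conclude $d\eta=0$, whence $\eta\in\mathcal{H}_{T}(\Omega;\Lambda^{k-1})$; the identity $\int_{\Omega}\langle\eta,h\rangle=\int_{\Omega}\langle Bu,dh\rangle-\int_{\partial\Omega}\langle\nu\lrcorner Bu,h\rangle=0$ for every $h\in\mathcal{H}_{T}(\Omega;\Lambda^{k-1})$ then forces $\eta=0$. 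For $\lambda>0$ I would produce a corrector $\bar{u}=u-\tfrac{1}{\lambda}B^{-1}B^{T}d\eta$ so that $\delta(B\bar{u})=\eta-\tfrac{1}{\lambda}\delta(B^{T}d\eta)=0$, and I would adjust for any residual tangential trace using the tools of Theorem~\ref{dirichlet problem arbitrary boundary data}.

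The main difficulty is this corrector step when $\lambda>0$. In contrast to the scalar case $B=\mathbf{I}$, where $\nu\wedge(d\delta\omega)$ vanishes on $\partial\Omega$ as a direct consequence of $\nu\wedge\delta\omega=0$, in the general case the non-commutativity of $d$ with $B^{-1}B^{T}$ makes $\nu\wedge(B^{-1}B^{T}d\eta)$ nonzero a priori, and one must either solve an auxiliary scalar Dirichlet problem to absorb the residual tangential trace or identify the Maxwell spectrum on $(k-1)$-forms with the relevant part of $\sigma$ so that $\lambda\notin\sigma$ forces $\eta=0$ directly. The claimed regularity estimates then transfer verbatim from Theorem~\ref{generalHodgesystemtheorem}. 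Part~(ii) follows by dual arguments: the lifting is supplied by Theorem~\ref{linear system assymmetric d-delta}(ii), the Hodge-type system by Theorem~\ref{generalHodgesystemtheoremnormal}, and coercivity by~\eqref{newgaffneynormal}; the additional pointwise boundary conditions $\nu\lrcorner g=\nu\lrcorner\delta(B\omega_{0})$ and $\nu\lrcorner f=\nu\lrcorner[\delta(A\,d\omega_{0})-\lambda B\omega_{0}]$ are exactly what is needed to make the normal traces of $B\omega$ and $A\,d\omega$ consistent with the lifting simultaneously.
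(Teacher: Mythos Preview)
Your overall architecture matches the paper's: lift the data $(g,\omega_{0})$ to reduce to homogeneous divergence and tangential trace, solve the full Hodge-type system from Theorem~\ref{generalHodgesystemtheorem}, then argue that $\eta:=\delta(Bu)$ vanishes. The paper's lifting is cosmetically different (it finds $G$ with $\nu\wedge G=0$ and $\delta(BG)=g-\delta(B\omega_{0})$, then writes $\omega=\omega_{0}+u+G$, which is your $\omega^{\ast}=\omega_{0}+G$), and for $\lambda=0$ the co-exactness argument is the same as yours.

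The substantive difference is the $\lambda>0$ step. Your primary route, the corrector $\bar u=u-\lambda^{-1}B^{-1}B^{T}d\eta$, is not used in the paper and, as you yourself flag, runs into a genuine obstruction at the boundary. The paper avoids any corrector: it simply notes that $\eta$ is a weak solution of
\[
\delta(B^{T}d\eta)=\lambda\eta,\qquad \delta\eta=0,\qquad \nu\wedge\eta=0,
\]
i.e.\ of the Maxwell-type eigenvalue problem on $(k-1)$-forms with coefficient $B^{T}$. Since $B^{T}$ inherits the Legendre condition from $B$, the spectrum of this problem lies in $(-\infty,0]$ (cf.\ Remarks~\ref{spectrum}(ii) and~\ref{lambda0maxwelltangential}), so $\lambda>0$ forces $\eta=0$ outright. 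This is your option~(b), but note that no identification with the $k$-form spectrum $\sigma$ is needed; one only uses that \emph{every} positive $\lambda$ lies outside the $(k-1)$-form Maxwell spectrum. With $\eta=0$ in hand for all $\lambda\ge0$, the corrector and the auxiliary Dirichlet problem of your option~(a) are both unnecessary. Part~(ii) then follows by the dual argument you describe.
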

 \begin{proof}
  We show only part(i) in the Sobolev case, the other cases are similar. 
 We first find $G \in W^{r+2,p}(\Omega, \Lambda^{k})$ such that 
  \begin{equation*} 
   \left\lbrace \begin{aligned}
                dG = 0  \quad \text{and} \quad &  \delta (B(x)G) = g - \delta\left( B(x)\omega_{0}\right) &&\text{ in } \Omega, \\
                \nu\wedge G &= 0 &&\text{  on } \partial\Omega.
                \end{aligned} 
                \right. 
                \end{equation*}
Now we find $u \in W^{r+2,p}(\Omega, \Lambda^{k})$ solving 
\begin{equation}\label{uequation}
 \left\lbrace \begin{aligned}
                \delta ( A (x) du )  + \left( B(x) \right)^{T}d \delta\left( B(x) u \right)   &=  \lambda B(x)u + \widetilde{f}  &&\text{ in } \Omega, \\
                \nu\wedge u &= 0 &&\text{  on } \partial\Omega. \\
                \nu\wedge \delta \left( B(x)u\right) &= 0  &&\text{ on } \partial\Omega. 
                \end{aligned} 
                \right. 
\end{equation}
where 
$\widetilde{f} = f + \lambda B(x)\omega_{0} + \lambda B(x)G - \delta ( A (x) d\omega_{0} ).$ Note that if $\lambda = 0,$ 
then $\widetilde{f} \in \left(\mathcal{H}_T(\Omega;\Lambda^{k})\right)^{\perp}$ and 
$\delta\widetilde{f} = 0$ and thus \eqref{uequation} can always be solved for any $\lambda \geq 0 $ (see remark \ref{remarkhodgegeneraltan}).
This implies $v= \delta \left( B(x)u \right)$ is a $W^{1,p}$ weak solution of 
\begin{equation*}
 \left\lbrace \begin{aligned}
                \delta \left( B^{T}(x) dv  \right)  &=  \lambda v   &&\text{ in } \Omega, \\
                \delta v &= 0 &&\text{  in } \Omega. \\
                \nu\wedge v &= 0  &&\text{ on } \partial\Omega. 
                \end{aligned} 
                \right. 
\end{equation*}
But since no non-zero harmonic field can be co-exact, we have  
$ \delta \left( B(x)u \right) = 0$ in $\Omega.$
Now it is easy to check that  $\omega = \omega_{0} + u + G$ solves \eqref{problemMaxwellgeneraltan}.  
\end{proof}

\section{Appendix: Flattening the boundary}
Now we prove a lemma to transfer our problem to a half-ball by localizing and flattening the boundary near a boundary point. 
This can be done in such a way that we get a constant coefficient system of the same form up to a small perturbations and lower order terms.   

\begin{lemma}\label{flattening and freezing coefficients}
 Let $r\geq 0$ is an integer and $0 < \gamma < 1.$ Let $\partial\Omega$ is of class $C^{r+2}$, resp. $C^{r+2,\gamma}.$  
Let $\omega \in W_{T}^{1,2}(\Omega, \Lambda^{k})$ satisfy, for all $ \phi \in W_{T}^{1,2}(\Omega; \Lambda^{k}),$
 \begin{multline}\label{bilinear form w22 regularity}
 \int_{\Omega}  \langle A (x)d\omega , d\phi \rangle   + \int_{\Omega}\langle \delta \left( B(x)\omega \right) , \delta \left( B(x)\phi \right) \rangle 
  + \lambda  \int_{\Omega}\langle  B(x)\omega , \phi \rangle \\
  +\int_{\Omega} \langle  f, \phi \rangle  -\int_{\Omega} \langle  F, d\phi \rangle = 0.   
\end{multline}
\noindent Suppose  $f \in W^{r,p}(\Omega, \Lambda^{k}),$ resp. $C^{r,\gamma}(\overline{\Omega}, \Lambda^{k}),$ $ F \in W^{r+1,p}(\Omega, \Lambda^{k+1}),$ resp. 
 $ C^{r+1,\gamma}(\overline{\Omega}, \Lambda^{k+1})$ and $\lambda \in \mathbb{R}.$\smallskip 
 
\noindent Let $A \in C^{r+1}\left(\overline{\Omega}; L(\Lambda^{k+1},\Lambda^{k+1})\right),$ resp.
 $C^{r+1,\gamma}\left(\overline{\Omega}; L(\Lambda^{k+1},\Lambda^{k+1})\right),$ satisfy the Legendre-Hadamard condition,
  and let $B \in C^{r+2}\left(\overline{\Omega}; L(\Lambda^{k},\Lambda^{k})\right),$ resp. 
 $C^{r+2,\gamma}\left(\overline{\Omega}; L(\Lambda^{k},\Lambda^{k})\right),$ satisfies the Legendre condition. \smallskip

\noindent Then for every given $R > 0$ and for every $x_0 \in \partial\Omega,$  there exist 
 
 \begin{itemize}
 \item[(i)] a constant matrix $\bar{A}:\Lambda^{k+1} \rightarrow \Lambda^{k+1}$ satisfying the Legendre-Hadamard condition and a constant 
 matrix $\bar{B}:\Lambda^{k} \rightarrow \Lambda^{k}$ satisfying the Legendre condition, 
 \item[(ii)] matrix functions $ \mathrm{P} ,\mathrm{Q} ,\mathrm{R}$  and  $\mathrm{S} $
 which are $C^{r+1},$ respectively $C^{r+1, \gamma},$ in $B_{R}^{+}$ 
 with
 \begin{equation*}
 \mathcal{C}_{\mathrm{S}}\left( R \right) \leq c_{0}  \left( \mathcal{C}_{A}\left( R \right) + \mathcal{C}_{B}\left( R \right) + \mathcal{C}_{\nabla B}\left( R \right)+ \mathcal{C}_{\Phi}\left( R \right)\right) , 
\end{equation*}
depending only on $A$, $B$, $\Phi$, $\theta$, $W$ and $R$,   where $c_{0} > 0$ is a constant independent of $R$ and $\mathcal{C}_{\mathrm{S}},$ $\mathcal{C}_{A}, 
\mathcal{C}_{B}, \mathcal{C}_{\nabla B}$ and $ \mathcal{C}_{\Phi} $ 
 are the modulus of continuity of $\mathrm{S},$ $A$, $B$, $\nabla B $ and $\Phi,$ respectively,
 
\item[(iii)] a neighborhood $W$ of $x_0$ in $\mathbb{R}^n ,$  a function $\theta \in C_{c}^{\infty}\left( W \right) $, an admissible boundary coordinate system 
  $\Phi \in \operatorname*{Diff}^{r+2} (\overline{B_{R}};\overline{W}) $, 
 resp. $\operatorname*{Diff}^{r+2,\gamma} (\overline{B_{R}};\overline{W}) ,$ such that 
 $\Phi(0)= x_{0},$ $\Phi (B_{R}^{+} ) =\Omega \cap W$ and $\Phi (\Gamma_{R}) = \partial\Omega \cap W,$

\item[(iv)] a form $\widetilde{f} \in W^{r,p}(B_{R}^{+}; \Lambda^{k}),$ resp. $C^{r,\gamma}(\overline{B_{R}^{+}}; \Lambda^{k}),$ and a matrix field 
$\widetilde{F} \in W^{r+1,p}\left(B_{R}^{+}; \mathbb{R}^{\tbinom{n}{k}\times n}\right),$   
resp. $C^{r+1,\gamma}\left(\overline{B_{R}^{+}}; \mathbb{R}^{\tbinom{n}{k}\times n}\right),$   
with estimates on the norms by the norms of $f$ and $F$, the constants in the estimates depending only on $\Phi$, $\theta$, $W$ and $R$,
 \end{itemize}
such that 
 $u = \Phi^{\ast}(\theta\omega) \in W_{T, flat}^{1,2}(B_{R}^{+} ; \Lambda^{k}) $  satisfies, 
 for all $\psi \in W_{T, flat}^{1,2}(B_{R}^{+} ; \Lambda^{k}),$
 \begin{align*}
 \int_{B_{R}^{+}} \langle \bar{A}(du) ; d\psi \rangle + \int_{B_{R}^{+}} &\langle \delta ( \bar{B} u ) ; \delta ( \bar{B} \psi )  \rangle 
 +  \int_{B_{R}^{+}} \langle \widetilde{f} + \mathrm{P}u + \mathrm{R}\nabla u ; \psi \rangle 
   \notag \\ & - \int_{B_{R}^{+}} \langle \widetilde{F} -\mathrm{Q} u ; \nabla\psi \rangle   + \int_{B_{R}^{+}} \langle \mathrm{S} \nabla u , \nabla\psi \rangle = 0.
 \end{align*}
\end{lemma}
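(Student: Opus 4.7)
The plan is to first construct the diffeomorphism $\Phi$ and the cutoff $\theta$ in the standard way. For $x_0 \in \partial\Omega,$ choose a rotation aligning $e_n$ with the inward unit normal at $x_0,$ then take $\Phi$ to be the boundary straightening map of the required regularity ($C^{r+2}$ or $C^{r+2,\gamma}$), normalized so that $\Phi(0)=x_0$ and $D\Phi(0)=I.$ Shrinking $W$ (and hence $R$) if necessary, $\mathcal{C}_{\Phi}(R)$ becomes as small as we wish. Pick $\theta \in C_c^{\infty}(W)$ with $\theta\equiv 1$ on a smaller neighborhood of $x_0,$ and set $u:=\Phi^{\ast}(\theta\omega);$ the support condition on $\theta$ forces $u=0$ on $C_R,$ and the fact that $\Phi(\Gamma_R)\subset\partial\Omega$ together with $\nu\wedge\omega=0$ on $\partial\Omega$ gives $e_n\wedge u=0$ on $\Gamma_R,$ so $u\in W_{T,flat}^{1,2}.$ Any test form $\psi\in W_{T,flat}^{1,2}(B_R^+;\Lambda^k)$ corresponds by $(\Phi^{-1})^{\ast}$ to an admissible $\phi\in W_T^{1,2}(\Omega;\Lambda^k)$ supported inside $W,$ so \eqref{bilinear form w22 regularity} may be tested against it.

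The next step is to pull back the weak formulation. Change variables $x=\Phi(y)$ in each term; the Jacobian determinant and the matrix $G:=(D\Phi)^T D\Phi$ enter through the Euclidean volume element and the scalar product. Use that $d$ commutes with $\Phi^{\ast},$ so that $d\bigl(\Phi^{\ast}(\theta\omega)\bigr)=\Phi^{\ast}\bigl(\theta\,d\omega\bigr)+\Phi^{\ast}\bigl(d\theta\wedge\omega\bigr).$ For $\delta=(-1)^{nk+1}\ast d\,\ast$ there is no such clean identity, but a direct coordinate computation expresses the pulled-back codifferential as $\delta$ (Euclidean) plus a zeroth and first order operator whose coefficients are polynomial in $G-I$ and $\nabla G,$ hence controlled by $\mathcal{C}_{\Phi}(R).$ The cutoff contributes commutators $[d,\theta]$ and $[\delta,\theta]$ whose terms are zeroth or first order in $u$ with smooth bounded coefficients; these will become the $\mathrm{P}u,\mathrm{R}\nabla u$ contributions (paired with $\psi$) and the $\mathrm{Q}u$ contribution (paired with $\nabla\psi$). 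The pullbacks of the inhomogeneities $f,F$ (after multiplication by the appropriate Jacobian factors) produce $\widetilde{f}$ and $\widetilde{F}$ with the stated norm bounds.

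Now freeze the coefficients: set $\bar{A}:=A(x_0)$ and $\bar{B}:=B(x_0),$ which inherit the Legendre--Hadamard and Legendre conditions respectively. For the $A$-term write
\begin{equation*}
A(\Phi(y))=\bar{A}+\bigl(A(\Phi(y))-\bar{A}\bigr),
\end{equation*}
and similarly for $B.$ Expanding $\delta(B(\Phi(y))u)$ by the Leibniz rule yields $B\delta u$ plus a zeroth-order term in $\nabla B\cdot u,$ which accounts for the appearance of $\mathcal{C}_{\nabla B}(R).$ Replacing $B$ by $\bar B$ in the leading $\delta(\bar B u)\,\delta(\bar B\psi)$ pairing introduces a difference that, by polarization, is quadratic in $\nabla u$ and $\nabla\psi$ with coefficients $O\bigl(\mathcal{C}_{B}(R)+\mathcal{C}_{\nabla B}(R)\bigr);$ the same recipe applied to the $A$-term gives coefficients $O\bigl(\mathcal{C}_{A}(R)\bigr).$ Combining these with the Jacobian discrepancy $O\bigl(\mathcal{C}_{\Phi}(R)\bigr),$ all differences of principal part are collected into a single matrix field $\mathrm{S}(y)$ acting on $\nabla u$ and paired with $\nabla\psi,$ and its modulus of continuity satisfies the required bound $\mathcal{C}_{\mathrm{S}}(R)\leq c_0\bigl(\mathcal{C}_A(R)+\mathcal{C}_B(R)+\mathcal{C}_{\nabla B}(R)+\mathcal{C}_{\Phi}(R)\bigr).$ The $\lambda B\omega$ contribution, being zeroth order, is absorbed into $\mathrm{P}u$ (and into $\widetilde f$).

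The main obstacle is the bookkeeping for the codifferential, since $\Phi^{\ast}\delta\neq\delta\Phi^{\ast}$ in general: one must carefully verify that every deviation of the pulled-back codifferential from the flat Euclidean $\delta$ (applied to $\bar B u$) produces a term of the form $\mathrm{S}\nabla u$ with the claimed modulus-of-continuity estimate, or else a strictly lower-order term that is safely absorbed into $\mathrm{P},\mathrm{Q},\mathrm{R}.$ This requires a componentwise expansion in the basis $\{e^I\}$ and a systematic separation of terms by order of differentiation; once this is in place the rest of the argument is routine, and the extra dependence on $\mathcal{C}_{\nabla B}$ follows from the extra derivative hitting $B$ in $\delta(B u).$
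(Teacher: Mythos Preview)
Your proposal is correct and follows essentially the same route as the paper: construct the boundary chart $\Phi$ and cutoff $\theta$, pull back the weak formulation, freeze the coefficients at $x_{0}$, and sort the remainders into $\mathrm{P},\mathrm{Q},\mathrm{R},\mathrm{S},\widetilde{f},\widetilde{F}$ according to how many derivatives hit $u$ and $\psi$. The only cosmetic difference is that the paper allows $D\Phi(0)\in SO(n)$ and therefore defines $\bar{A}=(T^{-1})^{\ast}\circ A(x_{0})\circ T^{\ast}$ and $\bar{B}=(T^{-1})^{\ast}\circ B(x_{0})\circ T^{\ast}$ with $T=(D\Phi(0))^{-1}$, whereas you absorb the rotation into $\Phi$ up front so that $D\Phi(0)=I$ and can simply take $\bar{A}=A(x_{0})$, $\bar{B}=B(x_{0})$; the two are equivalent and your normalization makes the verification of the Legendre--Hadamard and Legendre conditions for $\bar{A},\bar{B}$ immediate.
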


\begin{proof}
Given $R> 0,$ the existence of an admissible boundary co-ordinate system 
$\Phi \in \operatorname*{Diff}^{r+2} (\overline{B_{R}};\overline{W}) ,$ respectively,  
$\operatorname*{Diff}^{r+2,\gamma} (\overline{B_{R}};\overline{W})$ is standard (see \cite{Morrey1966}, also \cite{Csatothesis}). Also we can 
assume $x_{0} = 0$ and $D\Phi(0) \in  { \rm SO(n)}.$ Set $T = \left( D\Phi(0)\right)^{-1}$ and define $$\bar{A} = \left( T^{-1} \right)^{\ast} \circ A(x_{0}) \circ  \left( T \right)^{\ast} \text{ and } 
\bar{B} = \left( T^{-1} \right)^{\ast} \circ B(x_{0}) \circ  \left( T \right)^{\ast}.$$
Then $\bar{A}$ satisfies a Legendre-Hadamard condition.
Indeed, for any $a \in \Lambda^{1}, b \in \Lambda^{k},$ we have
\begin{align*}
 \langle \bar{A} (a \wedge b ); a \wedge b \rangle &= \langle \left( T^{-1} \right)^{\ast} \circ A(x_{0}) \circ  \left( T \right)^{\ast} (a \wedge b ); a \wedge b \rangle \\
&= \left( T^{-1} \right)^{\ast} \left( \langle A(x_{0}) \circ  \left( T \right)^{\ast} (a \wedge b );   \left( T \right)^{\ast} (a \wedge b ) 
\rangle \right) \\&= \left( T^{-1} \right)^{\ast} \left( \langle A(x_{0})  \left( T^{\ast}a \wedge T^{\ast} b \right);   \left( T^{\ast}a \wedge T^{\ast}b \right) 
\rangle \right) \geq \gamma_{\bar{A}} \lvert a \wedge b \rvert^{2},
\end{align*}
for some $\gamma_{\bar{A}} > 0,$ since $\left( T^{-1} \right)^{\ast},$ $T^{\ast}$ are both bijective and $ A(x_{0})$ satisfies the Legendre-Hadamard condition. 
Clearly $\bar{B}$ satisfies a Legendre condition. Choosing $\phi = (\Phi^{-1})^{\ast}\psi$ and substituting $\theta\omega = (\Phi^{-1})^{\ast} u $ in the equation satisfied by 
$\theta\omega,$ the conclusion of the lemma follows by
 applying change of variables and grouping the terms. The terms containing $f$ and $F,$  are grouped according as whether they are multiplied with $\psi$ or derivatives of 
$\psi ,$ as $\widetilde{f}$ and $\widetilde{F},$ respectively. The claimed regularity of the coefficients follow easily. 
 On the other hand, all the components of $\mathrm{S}$  contain terms which depend on the components of the 
differences  $ A(x) - A(x_{0}),$ $ B(x) - B(x_{0}),$ $\nabla B(x) - \nabla B (x_{0})$ and $D\Phi(y) - D\Phi(0),$ implying $\mathrm{S}(0) =0$ and the claimed scaling of 
$\mathcal{C}_{\mathrm{S}}.$  
\end{proof}
\begin{remark}\label{remark for the flattening lemma} Note that if $A$ satisfies the Legendre condition, then $\bar{A}$ satisfies the Legendre condition as well.
\end{remark}
\begin{remark}\label{flatteninglemmanormal} If $\omega \in W_{N}^{1,2}(\Omega, \Lambda^{k})$ satisfy, 
 \begin{multline}
 \int_{\Omega}  \langle A (x)d \left( B^{-1}(x) \omega \right) , d \left( B^{-1}(x) \phi \right) \rangle   + \int_{\Omega}\langle \delta \omega  , \delta \phi \rangle 
  + \lambda  \int_{\Omega}\langle  \omega , B^{-1}(x)\phi \rangle \\
  +\int_{\Omega} \langle  f, B^{-1}(x)\phi \rangle  -\int_{\Omega} \langle  F, d \left( B^{-1}(x)\phi \right)\rangle = 0,   
\end{multline}
for all $ \phi \in W_{N}^{1,2}(\Omega; \Lambda^{k})$ and $A$ satisfies the Legendre condition, then analogous results hold, giving the existence $W,\theta, \Phi$ and constant matrices  $\bar{A}$ and $\bar{B}$, both satisfying 
the Legendre condition such that $u = \Phi^{\ast}(\theta\omega) \in W_{N, flat}^{1,2}(B_{R}^{+} ; \Lambda^{k}) $  satisfies, 
 for all $\psi \in W_{N, flat}^{1,2}(B_{R}^{+} ; \Lambda^{k}),$
 \begin{align*}
 \int_{B_{R}^{+}} \langle \bar{A}(d \left( \bar{B}^{-1}u \right)) ; d\left( \bar{B}^{-1}\psi \right)\rangle + &\int_{B_{R}^{+}} \langle \delta  u ; \delta \psi   \rangle 
 +  \int_{B_{R}^{+}} \langle \widetilde{f} + \mathrm{P}u + \mathrm{R}\nabla u ; \psi \rangle 
   \notag \\ & - \int_{B_{R}^{+}} \langle \widetilde{F} -\mathrm{Q} u ; \nabla\psi \rangle   + \int_{B_{R}^{+}} \langle \mathrm{S} \nabla u , \nabla\psi \rangle = 0,
 \end{align*}
 with the same conclusions for $\mathrm{P},\mathrm{Q},\mathrm{R},\mathrm{S},\widetilde{f}$ and $\widetilde{F}.$
\end{remark}

\begin{acknowledgement}
The author thanks Bernard Dacorogna, Jan Kristensen and Ho\`{a}i-Minh Nguy\^{e}n for helpful 
comments and discussions. This work was conceived as a part of author's doctoral thesis in EPFL, whose support and facilities are also gratefully acknowledged. The author thanks the anonymous referee for the comments and suggestions. 
\end{acknowledgement}

\end{document}